\theoremstyle{plain}
\newtheorem{theoremletter}{Theorem}
\crefname{theoremletter}{Theorem}{Theorems}
\theoremstyle{definition}
\newtheorem{exampleletter}[theoremletter]{Example}
\crefname{exampleletter}{Example}{Examples}
\theoremstyle{definition}
\newtheorem{definitionletter}[theoremletter]{Definition}
\newcommand{\inter}{\operatorname{int}}
\ProvideMathOperator{\covol}{covol}
\newcommand{\otmin}{\otimes_{\text{\rmfamily min}}}
\DeclareMathOperator*{\freeprod}{{\scaleobj{1.5}{\ast}}}
\title{Ozawa's class \texorpdfstring{$\mathcal S$}{𝓢} for locally compact groups and unique prime factorization of group von Neumann algebras}
\author{Tobe Deprez\thanks{KU~Leuven, Department of Mathematics, Leuven (Belgium), tobe.deprez@kuleuven.be.
        Supported by a PhD fellowship of the Research Foundation Flanders (FWO). Part of this research was performed while the author was visiting the Institute for Pure and Applied Mathematics (IPAM), which is supported by the National Science Foundation.}}
\date{\today}
\begin{document}
    \maketitle
    \begin{abstract}
        \noindent We study class $\mathcal S$ for locally compact groups. We characterize locally compact 
        groups in
        this class as groups having an amenable action on a boundary that is small at infinity, 
        generalizing a theorem of \citeauthor{Ozawa2006}.
        Using this characterization, we provide new examples of groups in class $\mathcal S$ and prove unique 
        prime factorization results for group von Neumann 
        algebras of products of locally compact groups in this class. We also prove that class $\mathcal S$ is a measure
        equivalence invariant.
    \end{abstract}

    \section{Introduction}
    Class $\mathcal S$ for countable groups was introduced by \citeauthor{Ozawa2006} in 
    \cite{Ozawa2006}. A countable group $\Gamma$ is said to be in \emph{class $\mathcal S$} if it is exact and
    it admits a map $\eta: \Gamma \to \Prob(\Gamma)$ satisfying
    \[\lim_{k\to \infty} \norm{\eta(gkh) - g\cdot \eta(k)}= 0\]
    for all $g,h\in G$. Equivalently, class $\mathcal S$ can be characterized as the class of all groups that 
    admit an amenable action on a boundary that is \emph{small at infinity} (see \cite[Theorem~4.1]{Ozawa2006}). 
    Groups in class $\mathcal S$ are also called \imph{bi-exact}.
    
    Class $\mathcal S$ is used in, among others, 
    \cite{Ozawa2004solid,Ozawa2006,OP2004prime,Chifan2011,Popa2014a,CI2018amalgamated,CdSS2016products} to prove 
    rigidity results for group von Neumann algebras of countable groups. In \cite{Ozawa2004solid}, 
    \citeauthor{Ozawa2004solid} proved that the group von Neumann algebra
    $L(\Gamma)$ is solid when $\Gamma$ belongs to class $\mathcal S$. This implies in particular that for $\Gamma$ icc, 
    nonamenable and in class $\mathcal S$, the group von Neumann algebra $L(\Gamma)$ is 
    \emph{prime}, i.e. $L(\Gamma)$ does not
    decompose as a tensor product $M_1\otimes M_2$ for non-type I factors $M_1$ and $M_2$.
    
    In \cite{OP2004prime}, \citeauthor{OP2004prime} 
    proved the first unique prime factorization results for von Neumann algebras using groups in this class. 
    Among other results, they showed that if $\Gamma = \Gamma_1\times \dots\times \Gamma_n$ is a product of 
    nonamenable, icc groups in class $\mathcal S$, then
    $L(\Gamma)\cong L(\Gamma_1)\otimes \dots \otimes L(\Gamma_n)$ 
    remembers the number of factors $n$ and each factor $L(\Gamma_i)$ up to amplification, i.e. if 
    $L(\Gamma)\cong N_1\otimes \dots\otimes N_m$ for some prime factors $N_1,\dots,N_m$, then $n=m$ and (after 
    relabeling) $L(\Gamma_i)$ is stably isomorphic to $N_i$ for $i=1,\dots,n$.   
    Subclasses of class $\mathcal S$ were used in 
    \cite{Chifan2011,Popa2014a,Houdayer2013} to prove 
    rigidity results on crossed product von Neumann algebras $L^\infty(X)\rtimes \Gamma$.
    
    Examples of countable groups in class $\mathcal S$ are amenable groups, hyperbolic groups (see 
    \cite{Adams1994boundary}), 
    lattices in connected simple Lie groups of rank one (see \cite[Proof of Théorème 4.4]{Skandalis1988}), wreath 
    products $B\wr \Gamma$ with $B$ amenable and $\Gamma$ in class $\mathcal S$ (see \cite{Ozawa2006}) and 
    $\mathbb Z^2\rtimes \SL_2(\mathbb Z)$ (see \cite{Ozawa2009}). Moreover, class $\mathcal S$ is closed 
    under measure equivalence (see 
    \cite{Sako2009}). Examples of groups not belonging to class $\mathcal S$ are product groups 
    $\Gamma\times \Lambda$ with $\Gamma$ nonamenable and $\Lambda$ infinite, nonamenable inner amenable groups and 
    nonamenable groups with infinite center.
    
    In this paper, we study class $\mathcal S$ for locally compact groups. We provide a characterization of groups 
    in 
    this class similar to \cite[Theorem~4.1]{Ozawa2006}, we provide new examples 
    of groups 
    in this class and we 
    prove a unique prime factorization result for group von Neumann algebras of locally compact groups. We also prove 
    that class $\mathcal S$ is a measure equivalence invariant.
    
    Let $G$ be a locally compact second countable (lcsc) group. We denote by $\Prob(G)$ the space of all Borel 
    probability measures, i.e.
    the state space of $C_0(G)$. The precise definition of class $\mathcal S$ for locally compact groups is now 
    as follows.
    \begin{definitionletter}
        Let $G$ be a lcsc group. We say that $G$ is in 
        \imph{class $\mathcal S$} (or \imph{bi-exact}) if 
        $G$ is exact and if there exists a $\norm{.}$-continuous map $\eta: G\to \Prob(G)$ satisfying
        \begin{align}
            \lim_{k\to \infty}\norm{\eta(gkh) - g\cdot \eta(k)}= 0\label{eq:prop (S)}
        \end{align}
        uniformly on compact sets for $g,h\in G$.
    \end{definitionletter}
    In \cite{Brothier2017rigidity} this property without the exactness condition was called \imph{property (S)}. Note 
    that the definition was slightly different: the image of the map $\eta$ above was in the space 
    $\mathcal S(G) = \set{f\in L^1(G)^+}{\norm{f}_1=1}$ instead of $\Prob(G)$. However, we prove in 
    \cref{prop(S).equivalent}
    that this is equivalent. It is also worthwhile to note that it is currently unknown whether there are groups 
    with property (S) that are not exact. 
    
    Examples of lcsc groups in class $\mathcal S$ include amenable groups, 
    groups acting continuously and properly on a tree or hyperbolic graph of uniformly bounded degree and connected, 
    simple Lie groups of real rank one with finite center. Proofs of these results can be found in 
    \cite[Section~7]{Brothier2017rigidity}.
    It is easy to prove that groups not in class $\mathcal S$ include 
    product groups $G\times H$ with $G$ nonamenable and $H$ non-compact, nonamenable groups $G$ with non-compact 
    center
    and nonamenable groups $G$ that are \emph{inner amenable at infinity}, i.e. for which there exists a conjugation 
    invariant mean $m$ on $G$ such that $m(E) = 0$ for every compact set $E\subseteq G$ (see also 
    \cref{bi-exact.inner amenable}).
    
    We prove a version of \cite[Theorem~4.1]{Ozawa2006} for locally compact groups in class $\mathcal S$, 
    i.e. we characterize groups in class $\mathcal S$ as groups acting amenably on a boundary that is 
    \emph{small at infinity}.
    Given a locally compact group $G$, we denote by $C_b^u(G)$ the algebra of \emph{bounded uniformly continuous} 
    functions on $G$, i.e. the bounded functions $f: G\to \mathbb C$ such that
    \[\norm{\lambda_gf - f}_\infty \to 0\mathand \norm{\rho_gf - f}_\infty \to 0\]
    whenever $g\to e$. Here, $\lambda$ and $\rho$ denote the left and right regular representations respectively, i.e.
    $(\lambda_gf)(h) = f(g^{-1}h)$ and $(\rho_gf)(h) = f(hg)$.
    We define the compactification $h^uG$ of the group $G$ as the spectrum of the following algebra
    \[C(h^uG) \cong \set{f\in C_b^u(G)}{\text{$\rho_gf - f\in C_0(G)$ for all $g\in G$}}\]
    and denote by $\nu^uG = h^uG\setminus G$ its boundary. The compactification $h^uG$ is equivariant in the sense that 
    both actions 
    $G\acts G$ by left and right translation extend to continuous actions $G\acts h^uG$. It is also 
    \emph{small at infinity} in the sense that the extension of the action by right translation is trivial on the 
    boundary $\nu^uG$. It is moreover the universal equivariant compactification that is small at infinity, in the 
    sense that for every equivariant compactification $\overline G$ that is small at infinity, the inclusion 
    $G\hookrightarrow \overline G$ extends to a continuous $G$-equivariant map $h^uG\to \overline G$.
    
    The characterization 
    of groups in class $\mathcal S$ now goes as follows.
    \begin{theoremletter}\label{bi-exact.equivalent}
        Let $G$ be a lcsc group. Then, the following are equivalent
        \begin{equivalent}
            \item $G$ is in class $\mathcal S$,\label{bi-exact.equivalent.bi-exact}
            \item the action $G\acts \nu^uG$ induced by left translation is topologically amenable,\label{bi-exact.equivalent.boundary}
            \item the action $G\acts h^uG$ induced by left translation is topologically amenable,\label{bi-exact.equivalent.compactification}
            \item the action $G\times G\acts C_b^u(G) / C_0(G)$ induced by left and right translation is topologically 
            amenable.\label{bi-exact.equivalent.GxG action}
        \end{equivalent}
    \end{theoremletter}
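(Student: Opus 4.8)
The plan is to prove the cycle $\ref{bi-exact.equivalent.GxG action}\Rightarrow\ref{bi-exact.equivalent.boundary}\Leftrightarrow\ref{bi-exact.equivalent.compactification}\Rightarrow\ref{bi-exact.equivalent.bi-exact}\Rightarrow\ref{bi-exact.equivalent.GxG action}$, following the template of \cite[Theorem~4.1]{Ozawa2006} but carrying out all of the topological‑amenability bookkeeping by hand in the locally compact setting. I will freely use the standard facts that a topologically amenable action on a compact space forces exactness of the acting group; that topological amenability is inherited by closed invariant subsets, is pushed forward along continuous equivariant surjections, is stable under products, and can be glued from an open invariant subset together with a closed invariant subset on each of which the group acts amenably; and that an exact group admits a topologically amenable left‑translation action on a compact space, e.g.\ on the spectrum of $C_b^u(G)$.

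The two easy implications come first. For $\ref{bi-exact.equivalent.compactification}\Leftrightarrow\ref{bi-exact.equivalent.boundary}$: restricting to the closed invariant subset $\nu^uG\subseteq h^uG$ gives one direction, while for the converse one writes $h^uG=G\sqcup\nu^uG$ with $G$ open and invariant, observes that $G\acts G$ by left translation is topologically amenable (it is transitive with trivial stabilizer), and glues it to the amenable action on $\nu^uG$. For $\ref{bi-exact.equivalent.GxG action}\Rightarrow\ref{bi-exact.equivalent.boundary}$: since $C(\nu^uG)=C(h^uG)/C_0(G)$ is a $G\times G$‑invariant C*‑subalgebra of $C_b^u(G)/C_0(G)$, there is a $G\times G$‑equivariant continuous surjection $\operatorname{spec}\bigl(C_b^u(G)/C_0(G)\bigr)\twoheadrightarrow\nu^uG$; restricting the $G\times G$‑action to $G\times\{e\}$ and pushing it forward along this surjection yields topological amenability of $G\acts\nu^uG$.

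The genuine work is $\ref{bi-exact.equivalent.compactification}\Rightarrow\ref{bi-exact.equivalent.bi-exact}$ and $\ref{bi-exact.equivalent.bi-exact}\Rightarrow\ref{bi-exact.equivalent.GxG action}$. For the first, amenability of $G\acts h^uG$ supplies exactness of $G$ together with a net of norm‑continuous maps $\mu_i\colon h^uG\to\Prob(G)$ that are asymptotically left‑equivariant, uniformly on $h^uG$ and on compacta of $G$; restricting $\mu_i$ to $G$ already gives the left half of \eqref{eq:prop (S)}. For the right half I would use smallness at infinity: since right translation is trivial on $\nu^uG$ and $\mu_i$ is continuous on $h^uG$, the map $(x,g)\mapsto\mu_i(xg)-\mu_i(x)$ on $h^uG\times G$ vanishes on $\nu^uG\times G$, whence $\norm{\mu_i(kg)-\mu_i(k)}\to0$ as $k\to\infty$, uniformly for $g$ in compacta, for each fixed $i$. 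As the left‑defect of a single $\mu_i|_G$ does not itself vanish, one cannot take $\eta=\mu_i|_G$; instead one assembles $\eta$ from the $\mu_i|_G$ by a diagonal argument, using a partition of unity on $G$ ``escaping to infinity'' whose $i$‑th bump is placed so far out that $\mu_i|_G$ is already good on it, and chosen in a slowly varying (Følner‑type) way so that the partition‑of‑unity coefficients contribute a vanishing error — after a preliminary convolution with a bump function to make $\eta$ uniformly continuous. The converse $\ref{bi-exact.equivalent.bi-exact}\Rightarrow\ref{bi-exact.equivalent.GxG action}$ runs the construction backwards: from a regularized $\eta$, and from a companion $\eta''$ obtained by conjugating $\eta$ with the inversion map $k\mapsto k^{-1}$ (so that $\eta''$ absorbs left translations into its error exactly as $\eta$ absorbs right ones), one forms a $G\times G$‑version of the approximately invariant data on $G$, and then interpolates it — again via an escaping, slowly varying partition of unity — with the exactness witnesses for $G\acts\operatorname{spec}(C_b^u(G))$ to manufacture the required approximately $G\times G$‑invariant net on $\operatorname{spec}\bigl(C_b^u(G)/C_0(G)\bigr)$.

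The hard part, and the reason exactness has to be fed in separately in the last implication, is precisely that property~(S) alone is not known to imply exactness: the measures $\eta(k)$ escape to infinity as $k\to\infty$, so no $\Prob(G)$‑valued map built from $\eta$ alone extends continuously to a compactification, and it is the exactness data that anchor the construction on the ``interior''. The second source of friction is the mismatch between an index‑dependent net and the single honest limit demanded by \eqref{eq:prop (S)}; bridging it is exactly what the escaping partition of unity is for, and checking that the coefficient error genuinely vanishes is the main technical calculation. (The amenable‑group case is trivial and is disposed of separately, and the reduction allowing one to work with $\Prob(G)$ rather than $\mathcal S(G)$ is \cref{prop(S).equivalent}.)
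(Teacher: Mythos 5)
Your cycle breaks at the very first arrow. To get \ref{bi-exact.equivalent.GxG action}$\Rightarrow$\ref{bi-exact.equivalent.boundary} you restrict the amenable action of $G\times G$ on $X=\operatorname{spec}\bigl(C_b^u(G)/C_0(G)\bigr)$ to $G\times\{e\}$ and then ``push it forward'' along the equivariant surjection $X\twoheadrightarrow \nu^uG$ coming from the inclusion $C(\nu^uG)\subseteq C_b^u(G)/C_0(G)$. But topological amenability is \emph{pulled back}, not pushed forward, along continuous equivariant maps: given $p\colon X\to Y$ equivariant and $G\acts Y$ amenable, one composes the witnesses with $p$ to get amenability of $G\acts X$; the reverse transfer is false in general. (Any nonamenable exact group acts amenably on $\beta^{lu}G$, which surjects equivariantly onto a point.) Whether amenability descends along this particular surjection is essentially the content of the theorem, not a formal permanence property, so this step is a genuine gap and with it the whole cycle collapses: nothing is deduced from \ref{bi-exact.equivalent.GxG action}. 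The paper uses the same map $\varphi_\ell\colon X\to\nu^uG$ but in the legitimate (pullback) direction, pairing it with the right-handed boundary $\nu_r^uG$ to prove \ref{bi-exact.equivalent.boundary}$\Rightarrow$\ref{bi-exact.equivalent.GxG action} (the diagonal action on $\nu^uG\times\nu_r^uG$ is amenable because on each factor one of the two copies of $G$ acts trivially and the other amenably), and it exits condition \ref{bi-exact.equivalent.GxG action} by proving \ref{bi-exact.equivalent.GxG action}$\Rightarrow$\ref{bi-exact.equivalent.bi-exact} directly: extract witnesses in $C_c(X\times G\times G)^+$, Tietze-extend to $\beta^uG$, integrate out one $G$-variable, and feed the resulting sequence of maps $G\to M(G)^+$ into \cref{prop(S).equivalent}\,\ref{prop(S).equivalent.sequence}.

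Your implication \ref{bi-exact.equivalent.compactification}$\Rightarrow$\ref{bi-exact.equivalent.bi-exact} is sound and is a genuine (mild) variant of the paper's argument: you use smallness at infinity of $h^uG$ to control right translations where the paper uses the second $G$-factor, and the diagonal/escaping-partition assembly you describe is exactly \cref{prop(S).equivalent.net.technical}. By contrast, \ref{bi-exact.equivalent.bi-exact}$\Rightarrow$\ref{bi-exact.equivalent.GxG action} remains a sketch at precisely the hard point: you correctly observe that the $\eta(k)$ escape to infinity, so $\eta\otimes\eta''$ does not extend to a $\Prob(G\times G)$-valued map on $X$, but ``interpolating with the exactness witnesses for $G\acts\operatorname{spec}(C_b^u(G))$'' is not available as stated --- exactness provides an amenable action on $\beta^{lu}G$, not on $\beta^uG$, and there is no canonical equivariant map from the latter to the former. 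Bridging these two uniform structures is what the paper's u.c.p.\ map $\eta_*\colon C_b^{lu}(G)\to C(h^uG)$ accomplishes in the proof of \ref{bi-exact.equivalent.bi-exact}$\Rightarrow$\ref{bi-exact.equivalent.boundary}; some such device is needed here and is absent from your outline.
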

    The two novelties in the proof of this result are the proof of \ref{bi-exact.equivalent.compactification} and the 
    method we used to prove the implication 
    \ref{bi-exact.equivalent.GxG action}$\Rightarrow$\ref{bi-exact.equivalent.bi-exact}. Indeed, in the original
    proof of \citeauthor{Ozawa2006} in the countable setting, it was proven that $G$ belongs to class $\mathcal S$ 
    if 
    and only if there exists a u.c.p map 
    $\theta: \Cred(G)\otmin \Cred(G)\to B(L^2(G))$ satisfying 
    $\varphi(x\otimes y) - \lambda(x)\rho(y)\in K(L^2(G))$, where $\lambda$ and $\rho$ denote the representations of 
    $\Cred(G)$ induced by the left and right regular representation respectively.
    This is however no longer true for locally compact groups. Indeed, for all connected groups $G$, the reduced 
    \C*-algebra $\Cred(G)$ is nuclear and hence a map $\theta$ as above always exists.
    
    Denote by $\beta^{lu}G$ the left-equivariant Stone-Čech compactification of $G$, i.e. the spectrum of the algebra 
    $C_b^{lu}(G)$ of bounded left-uniformly continuous functions on $G$. The action $G\acts G$ by left-translation 
    extends uniquely to a continuous action $G\acts \beta^{lu}G$. Moreover, $\beta^{lu}G$ is the 
    universal left-equivariant compactification of $G$ in the sense that every left-$G$-equivariant continuous map 
    $G\to X$ to any compact space $X$ with continuous action $G\acts X$ extends uniquely to a $G$-equivariant 
    continuous map $\beta^{lu}G\to X$. We also prove the following characterization of groups in class $\mathcal S$.
    \begin{theoremletter}\label{bi-exact.map to Prob(beta^luG)}
        Let $G$ be a lcsc group. Then $G$ belongs to class $\mathcal S$ if and only if $G$ is exact and there exists a 
        Borel map $\eta: G\to \Prob(\beta^{lu}G)$ satisfying
        \[\lim_{k\to \infty} \norm{\eta(gkh) - g\cdot \eta(k)} = 0\]
        uniformly on compact sets for $g,h\in G$.
    \end{theoremletter}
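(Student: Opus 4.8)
The plan is to establish the two implications directly; neither really needs \cref{bi-exact.equivalent}.

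\emph{Forward implication.} Suppose $G$ is in class $\mathcal S$ and fix a $\norm{.}$-continuous $\eta_0\colon G\to\Prob(G)$ as in the definition. Since $G$ is locally compact it is open in $\beta^{lu}G$, and the inclusion $\iota\colon G\hookrightarrow\beta^{lu}G$ is continuous and left-$G$-equivariant; hence push-forward of measures gives a left-$G$-equivariant map $\iota_*\colon M(G)\to M(\beta^{lu}G)$ which is isometric (push-forward along an injection preserves total variation) and restricts to $\Prob(G)\hookrightarrow\Prob(\beta^{lu}G)$. Then $\eta:=\iota_*\circ\eta_0\colon G\to\Prob(\beta^{lu}G)$ is $\norm{.}$-continuous, in particular Borel, and since $\iota_*$ is isometric and intertwines the left actions,
\[\norm{\eta(gkh)-g\cdot\eta(k)}=\norm{\iota_*(\eta_0(gkh)-g\cdot\eta_0(k))}=\norm{\eta_0(gkh)-g\cdot\eta_0(k)}\longrightarrow 0\]
as $k\to\infty$, uniformly on compact sets for $g,h\in G$; and $G$ is exact by definition of class $\mathcal S$.

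\emph{Converse.} Assume $G$ is exact and $\eta\colon G\to\Prob(\beta^{lu}G)$ is Borel with $\norm{\eta(gkh)-g\cdot\eta(k)}\to 0$. Exactness of $G$ provides a topologically amenable action $G\acts Z$ on some compact space $Z$; composing an orbit map $G\to Z$ with the universal property of $\beta^{lu}G$ yields a continuous $G$-equivariant map $\beta^{lu}G\to Z$, and pulling back along it a net of continuous maps $Z\to\Prob(G)$ witnessing amenability of $G\acts Z$ produces continuous maps $\xi_i\colon\beta^{lu}G\to\Prob(G)$ with $\sup_{\omega\in\beta^{lu}G}\norm{\xi_i(g\omega)-g\cdot\xi_i(\omega)}\to 0$ uniformly for $g$ in compact sets. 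Put $\eta_i'\colon G\to\Prob(G)$,
\[\eta_i'(k)=\int_{\beta^{lu}G}\xi_i(\omega)\,d\eta(k)(\omega),\]
the barycentre, which is an honest probability measure on $G$ (each $\xi_i(\omega)\in\Prob(G)$ and the total mass is $\int_{\beta^{lu}G}1\,d\eta(k)=1$) and is Borel in $k$. The induced affine map $\Xi_i\colon\Prob(\beta^{lu}G)\to\Prob(G)$, $\mu\mapsto\int\xi_i\,d\mu$, is norm-decreasing and obeys $\norm{\Xi_i(g\cdot\mu)-g\cdot\Xi_i(\mu)}\le\sup_\omega\norm{\xi_i(g\omega)-g\cdot\xi_i(\omega)}$; since $\eta_i'=\Xi_i\circ\eta$, inserting the intermediate term $\Xi_i(g\cdot\eta(k))$ and using the triangle inequality gives
\[\norm{\eta_i'(gkh)-g\cdot\eta_i'(k)}\le\norm{\eta(gkh)-g\cdot\eta(k)}+\sup_\omega\norm{\xi_i(g\omega)-g\cdot\xi_i(\omega)}.\]
The first summand tends to $0$ as $k\to\infty$ uniformly on compact sets for $g,h$ by hypothesis, while the second tends to $0$ along the net uniformly for $g$ in compact sets.

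It remains to assemble $(\eta_i')$ into a single map of the form demanded in the definition of class $\mathcal S$. Writing $G$ as an increasing union of symmetric compact sets and choosing, for each $n$, an index $i_n$ together with a compact set outside of which both error terms above are below $1/n$ for $g,h$ in the $n$-th set, one glues the $\eta_{i_n}'$ along a cutoff $G\to\mathbb N$ that grows at infinity but varies slowly enough (relative to the nesting of the compact sets) to take the same value at $gkh$ and at $k$ once $k$ is large and $g,h$ lie in a fixed compact set; the resulting Borel map $G\to\Prob(G)$ still satisfies the required limit condition, and \cref{prop(S).equivalent} then upgrades it (and, if one wishes, moves its values into $\mathcal S(G)$), so that $G\in\mathcal S$. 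I expect this last assembly to be the only genuine obstacle: the conceptual step — that a topologically amenable action on \emph{any} compact space suffices to transport a $\Prob(\beta^{lu}G)$-valued asymptotically equivariant family on $G$ down to a $\Prob(G)$-valued one — is the short part.
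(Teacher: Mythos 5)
Your argument is correct and is essentially the paper's proof: the barycentre maps $\Xi_i$ are exactly the maps $\theta_n\colon\Prob(\beta^{lu}G)\to\Prob(G)$ the paper obtains from exactness via \cref{amenable action.X cs Prob(X)}, and the final ``assembly'' you worry about is precisely the implication \ref{prop(S).equivalent.sequence}$\Rightarrow$\ref{prop(S).equivalent.Prob(G).borel} of \cref{prop(S).equivalent} (i.e.\ \cref{prop(S).equivalent.net.technical}), since your sequence $\eta_i'$ takes values in $\Prob(G)$ and satisfies the two displayed limit conditions there, so it is not an obstacle but an already-proved lemma.
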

    In the proof of this theorem, we will see that it is precisely the exactness of $G$ that allows us to construct the
    required map $\tilde \eta: G\to \Prob(G)$ from a map $\eta: G\to \Prob(\beta^{lu}G)$. This was implicitly observed 
    before in \cite[Chapter~15]{Brown2008} for countable groups.
    
    Using \cref{bi-exact.equivalent}, we prove two new examples of locally compact groups in class 
    $\mathcal S$. The first example is the following.
    \begin{theoremletter}\label{bi-exact.R^2 rtimes SL_2(R)}
        The group $\mathbb R^2\rtimes \SL_2(\mathbb R)$ belongs to class $\mathcal S$.
    \end{theoremletter}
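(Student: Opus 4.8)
The plan is to produce a $G$-equivariant compactification $\overline G$ of $G$ which is small at infinity and on which left translation by $G$ acts topologically amenably; by \cref{bi-exact.equivalent} this puts $G$ into class $\mathcal S$. (The step from such a concrete $\overline G$ to the amenability of $G\acts h^uG$, equivalently to the existence of the map $\eta$ of the definition, is a standard restriction-of-invariant-means argument of the kind used for \cref{bi-exact.map to Prob(beta^luG)}, and is where the exactness of $G$ enters a second time.) Exactness of $G$ is immediate: $G=\mathbb R^2\rtimes\SL_2(\mathbb R)$ is a connected Lie group, being a semidirect product of connected groups, so $\Cred(G)$ is nuclear; alternatively, $G$ is an extension of the amenable group $\mathbb R^2$ by the exact group $\SL_2(\mathbb R)$.

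To build $\overline G$, write $\mathbb H^2=\SL_2(\mathbb R)/\mathrm{SO}(2)$ with basepoint $o$ and consider the continuous, proper map $p\colon G\to\mathbb R^2\times\mathbb H^2$, $(v,A)\mapsto(v,A\cdot o)$; its fibres are copies of $\mathrm{SO}(2)$, and it intertwines left translation on $G$ with the twisted action $(w,B)\cdot(v,q)=(w+Bv,B\cdot q)$ of $G$ on $\mathbb R^2\times\mathbb H^2$. I would then compactify $\mathbb R^2\times\mathbb H^2$ by a compactification $Y$ adapted to the linear action $\SL_2(\mathbb R)\acts\mathbb R^2$: adjoin the visual boundary $\partial\mathbb H^2$ in the $\mathbb H^2$-coordinate, and --- using the $\SL_2(\mathbb R)$-equivariant identification $\partial\mathbb H^2\cong\mathbb P(\mathbb R^2)$ --- near such a boundary direction record $v$ only modulo the corresponding (expanding) line in $\mathbb R^2$, while over interior points of $\mathbb H^2$ keep the full radial compactification of $\mathbb R^2$. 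Concretely, $C(Y)\subseteq C_b(\mathbb R^2\times\mathbb H^2)$ is generated by $C_0$, the pullback of $C(\overline{\mathbb H^2})$, bounded functions of $|\langle v,n(q)\rangle|$ where $n(q)$ is a unit vector in $\mathbb R^2$ normal to the geodesic ray $[o,q)$, and functions detecting the direction of $v$ while $q$ stays bounded. Pulling $Y$ back along $p$ yields a compactification $\overline G$ of $G$, to which left translation extends continuously.

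The crux is that $\overline G$ is small at infinity. Fix $h=(w_0,B_0)\in G$; right translation by $h$ replaces the orbit image $(v,A\cdot o)$ by $(v+Aw_0,A\cdot(B_0\cdot o))$. In the $\mathbb H^2$-coordinate this is a perturbation of size at most $d(o,B_0\cdot o)$, so it does not change the limit in $\overline{\mathbb H^2}$. In the $\mathbb R^2$-coordinate the perturbation is $Aw_0$; although $\norm{Aw_0}$ may be of the same order as $\sigma_1(A)$ and hence unbounded as $A\to\infty$, the component of $Aw_0$ orthogonal to the expanding line of $A$ has norm at most $\sigma_2(A)\norm{w_0}\to 0$. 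Thus, modulo the expanding line being quotiented out near the boundary, the shift is negligible, so $gh$ and $g$ converge to the same point of $\overline G$ as $g\to\infty$; in particular the right translation action of $G$ extends continuously to $\overline G$ and is trivial on $\nu:=\overline G\setminus G$. For topological amenability of $G\acts\overline G$: since $G$ is open and invariant in $\overline G$ and the left translation action $G\acts G$ is always amenable, the gluing property for topological amenability reduces us to showing that $G\acts\nu$ is amenable --- and $\nu$ consists of finitely many $G$-orbits, each with solvable (hence amenable) stabiliser. Indeed, if the $\mathbb H^2$-coordinate of $\omega\in\nu$ is some $\xi\in\partial\mathbb H^2$, then $\mathrm{Stab}_G(\omega)\subseteq\mathbb R^2\rtimes\mathrm{Stab}_{\SL_2(\mathbb R)}(\xi)$, and the latter stabiliser is a Borel subgroup, so the semidirect product is solvable; and if the $\mathbb H^2$-coordinate of $\omega$ lies in $\mathbb H^2$, then $\mathrm{Stab}_G(\omega)$ is contained in the translation subgroup $\mathbb R^2$ up to a group of order at most $2$.

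The main obstacle I anticipate is the rigorous construction and analysis of $Y$ and the verification that $\overline G$ is small at infinity: one must check that the coordinate ``$v$ modulo the expanding line of $A$'' is genuinely well defined and continuous on the relevant open part of $G$ (the complement of the locus where $\SL_2(\mathbb R)$ has coincident singular values), that it is stable under the replacement of $A$ by $AB_0$ --- this is perturbation theory of singular vectors, where the spectral gap $\sigma_1(A)^2-\sigma_2(A)^2\asymp\sigma_1(A)^2$ forces the expanding line of $AB_0$ to converge to that of $A$ --- and that the various regimes ($\mathbb H^2$-coordinate bounded or not; $\mathbb R^2$-coordinate bounded or unbounded modulo the expanding line) fit together into a single Hausdorff compactification carrying continuous left and right $G$-actions. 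The remaining point that needs care, the passage from an arbitrary small-at-infinity compactification with amenable left action to the exact statement of \cref{bi-exact.equivalent}, is handled by a standard argument in the spirit of \cref{bi-exact.map to Prob(beta^luG)} and again uses the exactness of $G$.
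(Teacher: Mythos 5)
Your overall strategy --- build an explicit equivariant compactification $\overline G$ of $G=\mathbb R^2\rtimes\SL_2(\mathbb R)$ that is small at infinity and has amenable boundary action, then invoke \cref{bi-exact.equivalent} --- is legitimate in principle, but the compactification you describe is not small at infinity, and this is a genuine gap rather than a technicality. Take a sequence $g_n=(v_n,A)$ with $A$ \emph{fixed} and $v_n\to\infty$, and right-translate by $h=(0,B_0)$: then $g_nh=(v_n,AB_0)$, so the $\mathbb H^2$-coordinate jumps from $A\cdot o$ to $AB_0\cdot o$, a change that does not decay as $n\to\infty$. Since your $C(Y)$ contains the full pullback of $C(\overline{\mathbb H^2})$, any non-constant $f\in C(\overline{\mathbb H^2})$ pulls back to an $F$ with $(\rho_hF-F)(v,A)=f(AB_0\cdot o)-f(A\cdot o)$ independent of $v$, hence not in $C_0(G)$. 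Your singular-value analysis correctly handles the regime $A\to\infty$, but the regime ``$A$ bounded, $v\to\infty$'' forces the boundary to forget the $\mathbb H^2$-coordinate entirely, which is incompatible with keeping $C(\overline{\mathbb H^2})$ in the algebra; repairing this requires a genuinely different (and delicate) choice of function algebra, which is exactly the ``main obstacle'' you defer. A second gap: your reduction of amenability of $G\acts\nu$ to ``finitely many orbits with amenable stabilisers'' is unjustified --- amenability of stabilisers does not imply amenability of a non-transitive action without additional structure (local closedness of orbits, an equivariant map to a homogeneous space $G/P$ with $P$ amenable, or similar), and you supply none.

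The paper avoids both difficulties by never compactifying the full group. It first proves a semidirect-product criterion (\cref{bi-exact.semi-direct product}): for $G=B\rtimes H$ with $B$ amenable, membership in class $\mathcal S$ reduces to $H\in\mathcal S$ (here $\SL_2(\mathbb R)$, by Skandalis) together with a single Borel map $\mu:B\to\Prob(H)$ that is asymptotically $H$-equivariant and asymptotically invariant under two-sided translation by $B$, the two regimes being glued by a cutoff function. The map $\mu$ for $B=\mathbb R^2$ is then produced from the amenability of $\SL_2(\mathbb R)\acts\beta^{\SL_2(\mathbb R)}(\mathbb R^2\setminus\{0\})$, which follows from the equivariant map $(m,n)\mapsto m/n$ to $\widehat{\mathbb R}\cong\SL_2(\mathbb R)/P$ with $P$ the amenable upper-triangular subgroup --- the same parabolic/boundary-of-$\mathbb H^2$ mechanism you invoke, but applied only where it is needed and packaged through \cref{prop(S).equivalent.net.technical}. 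If you want to salvage your approach, I would recommend abandoning the global compactification and instead verifying the two hypotheses of \cref{bi-exact.semi-direct product} directly.
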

    This result is a locally compact version of the main result in \cite{Ozawa2009}, where it was proven that 
    $\mathbb Z^2\rtimes \SL_2(\mathbb Z)$ belongs to class $\mathcal S$.

    In \cite{Cornulier2017wreath}, \citeauthor{Cornulier2017wreath} introduced a notion of wreath products for locally 
    compact groups. See \eqref{eq.wreath.definition} on page \pageref{eq.wreath.definition} for a short recapitulation 
    of 
    this notion and the notation used in 
    this article. The following result is a locally compact version of \cite[Corollary~4.5]{Ozawa2006}.
    \begin{theoremletter}\label{bi-exact.wreath product.introduction}
        Let $B$ and $H$ be lcsc groups, $X$ a countable set with a continuous action $H\acts X$ and $A\subseteq B$ be a 
        compact open subgroup. If $B$ is amenable, all stabilizers $\Stab_H(x)$ for $x\in X$ are amenable and $H$ 
        belongs to class $\mathcal S$, then also the wreath product $B\wr_X^A H$ belongs to class $\mathcal S$.
    \end{theoremletter}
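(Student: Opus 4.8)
The plan is to realize $W := B\wr_X^A H$ as a semidirect product and to use the boundary characterization \cref{bi-exact.equivalent}. Recall that $W = N\rtimes H$, where $N$ is the base of the wreath product, i.e.\ the restricted power $\{(b_x)_{x\in X}\in B^X : b_x\in A\text{ for all but finitely many }x\}$ on which $H$ acts by permuting the coordinates along $H\acts X$; write $\pi\colon W\to H$ for the quotient map. Since $A$ is a compact open subgroup of the amenable group $B$, the group $N$ is the directed union of the open subgroups $B^{F}\times A^{X\setminus F}$ (over finite $F\subseteq X$), each of which is a product of an amenable and a compact group and hence amenable; therefore $N$ is amenable. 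Being an extension of the exact group $H$ by the amenable (hence exact) group $N$, the group $W$ is exact. By the definition of class $\mathcal S$ it now remains to produce a $\norm{.}$-continuous map $\eta\colon W\to\Prob(W)$ satisfying \eqref{eq:prop (S)}. I will obtain $\eta$ from a topologically amenable action of $W$ on a compactification that is small at infinity: if $W\hookrightarrow\overline W$ is an equivariant compactification that is small at infinity and $W\acts\overline W$ is topologically amenable, then the continuous, approximately invariant maps $\overline W\to\Prob(W)$ witnessing amenability restrict along the dense inclusion $W\hookrightarrow\overline W$ to maps $\eta_i\colon W\to\Prob(W)$, and for $i$ large $\eta_i$ satisfies \eqref{eq:prop (S)} — the left-translation estimate comes from approximate equivariance and the right-translation estimate from triviality of right translation on the boundary, exactly as in the proof of \ref{bi-exact.equivalent.compactification}$\Rightarrow$\ref{bi-exact.equivalent.bi-exact} in \cref{bi-exact.equivalent}, whose argument only uses that the compactification is equivariant and small at infinity.

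The compactification $\overline W$ is designed to forget the base at infinity, and its boundary is glued from two amenable $H$-spaces. The first is the boundary $\nu^uH$: since $H$ is in class $\mathcal S$, the action $H\acts\nu^uH$ is topologically amenable, and this piece of $\nu^u\overline W$ records the direction in which $\pi(k)$ escapes in $H$. The second is built from the set $\mathcal F(X)$ of finite subsets of $X$: every stabilizer $\Stab_H(x)$, hence every $\Stab_H(F)=\bigcap_{x\in F}\Stab_H(x)$, is amenable, so the discrete $H$-set $\mathcal F(X)$ is topologically amenable, and a suitable compactification $\overline{\mathcal F}$ has boundary $\overline{\mathcal F}\setminus\mathcal F(X)$ on which $H$ still acts topologically amenably; this piece records the direction in which the support $\{x\in X : n_x\notin A\}$ of the base part of $k$ escapes in $X$. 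One chooses $\overline{\mathcal F}$ so that the orbit maps $H\to\mathcal F(X),\ g\mapsto g\cdot F_0$, extend to continuous maps $h^uH\to\overline{\mathcal F}$; this compatibility is the glue between the two pieces, and $\overline W$ is then the spectrum of the unital $C^{\ast}$-subalgebra of $C_b^u(W)$ generated by $C_0(W)$ together with the functions obtained from $C(h^uH)$ and $C(\overline{\mathcal F})$ through this gluing. One verifies: (a) both translation actions $W\acts W$ extend to $\overline W$; (b) right translation is trivial on $\nu^u\overline W$, so $\overline W$ is small at infinity; (c) left translation by $N\trianglelefteq W$ is trivial on $\nu^u\overline W$. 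Granting (c), the action $W\acts\nu^u\overline W$ factors through $\pi$, so its transformation groupoid is an extension of $\nu^u\overline W\rtimes H$ by the amenable group bundle $\nu^u\overline W\times N$; the former is amenable because the $H$-action on $\nu^u\overline W$ is glued from the amenable actions $H\acts\nu^uH$ and $H\acts\overline{\mathcal F}\setminus\mathcal F(X)$ along a closed invariant subset and its open complement. Hence $W\acts\nu^u\overline W$ is topologically amenable, and then so is $W\acts\overline W$ (gluing the amenable pair groupoid $W\times W$ over the open orbit $W$ with the amenable groupoid over the closed set $\nu^u\overline W$); by the first paragraph this finishes the proof.

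The technical heart, and the main obstacle, is the construction of $\overline W$ and the verification of (b). The naïve candidate — the algebra generated by $C_0(W)$ and the pullbacks $\tilde f\circ\pi$ with $\tilde f\in C(h^uH)$ — is not small at infinity, because $\pi$ is not proper and $\rho_w(\tilde f\circ\pi)-\tilde f\circ\pi=(\rho_{\pi(w)}\tilde f-\tilde f)\circ\pi$ need not lie in $C_0(W)$. Making the compactification small at infinity forces one to couple the escape of $\pi(k)$ in $H$ with the escape of the support of the base part of $k$ in $X$ — which is exactly the function of the compatibility between $h^uH$ and $\overline{\mathcal F}$ through the orbit maps — and to carry out the accompanying bookkeeping on finite subsets of $X$; here I would follow the combinatorics in the proof of \cite[Corollary~4.5]{Ozawa2006} for the discrete case, adapting it to the restricted power $N$, where the compact open subgroup $A$ makes the support map locally constant (keeping the topology manageable) and second countability of $W$ supplies the measurable selections entering the amenability estimates. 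A secondary point is that $\mathcal F(X)$ has infinitely many $H$-orbits, so the approximately invariant system witnessing amenability of $H\acts\overline{\mathcal F}\setminus\mathcal F(X)$ must be assembled by a diagonal argument, which is harmless because the compact subsets of the groupoid $\mathcal F(X)\rtimes H$ meet only finitely many orbits.
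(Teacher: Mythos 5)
Your overall strategy --- write $W=N\rtimes H$ with $N=B^{X,A}$ amenable, deduce exactness, and then try to exhibit an equivariant compactification of $W$ that is small at infinity with topologically amenable action, invoking \cref{bi-exact.equivalent} --- is legitimate in principle, and it is genuinely different from the paper's route: the paper never builds a compactification of $W$, but instead reduces everything to the semidirect product criterion of \cref{bi-exact.semi-direct product} and constructs explicit $\Prob(H)$-valued maps on the base. The problem is that your proposal stops exactly where the real work begins. The compactification $\overline W$ is never defined: the ``gluing'' of $C(h^uH)$ and $C(\overline{\mathcal F})$ along the orbit maps is left unspecified, you concede that the obvious candidate $\pi^*C(h^uH)+C_0(W)$ is not small at infinity, and you defer the fix to ``the combinatorics of \cite[Corollary~4.5]{Ozawa2006}'' --- which is precisely the part the paper warns does not transfer to the locally compact setting. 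The missing content is quantitative: one must couple the escape of $b\in B^{X,A}$ (both the size of $\supp_A b$, the distance of each $b(x)$ from $A$, and the position of $\supp_A b$ inside each orbit of $H\acts X$) to approximate invariance under $H$ and under left/right multiplication by $B^{X,A}$. In the paper this is carried by the map $\zeta_i(b)(x)=g\bigl(b(x)\bigr)+f(x)$ built from proper length functions, together with the non-obvious estimates $\norm{\zeta_i(b)}_1\to\infty$ and $|\supp_A b|/\norm{\zeta_i(b)}_1\to0$ as $b\to\infty$; nothing in your sketch produces these, and without them neither smallness at infinity of $\overline W$ nor your claim (c) (triviality of the left $N$-action on the boundary, which requires the boundary to be insensitive to modifying $b$ on the fixed finite set $\supp_A a$) can be verified.

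A second, independent gap concerns the amenability of $H$ on the boundary of $\overline{\mathcal F}$. Topological amenability of the discrete $H$-set $\mathcal F(X)$ only gives approximate invariance uniformly on compact, hence finite, subsets of $\mathcal F(X)$; but points of $\overline{\mathcal F}\setminus\mathcal F(X)$ are limits of nets escaping through possibly infinitely many $H$-orbits, so amenability on the boundary needs approximate invariance that is uniform over all of $\mathcal F(X)$ (or at least asymptotically as $F\to\infty$). Your remark that ``compact subsets of the groupoid meet only finitely many orbits'' does not supply this, because compact subsets of the boundary are not confined to finitely many orbit closures. The paper addresses exactly this point in Step~2 of the proof of \cref{bi-exact.wreath product}, upgrading the orbitwise maps via \cref{amenable action.uniform on space} so that the estimate holds for \emph{all} $b\notin A^{X_i}$ simultaneously across the infinitely many orbits not meeting the given compact set; some such uniformization is indispensable in your approach as well. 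As it stands, the proposal is a plausible program rather than a proof.
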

    
    A notion of measure equivalence for locally compact groups was introduced by S. \citeauthor{Deprez2014permanence} in
    \cite{Deprez2014permanence}. By \cite[Corollary~2.9]{DL2015propA} and \cite[Theorem~0.1~(6)]{Deprez2014permanence} 
    exactness is preserved under this notion of measure equivalence. More recently, this notion was studied in more 
    detail in 
    \cite{Koivisto2017measure,KKR2018non-unimodular}. It was proved that two lcsc groups $G$ and $H$ are measure 
    equivalent if and only if they admit essentially free, ergodic pmp actions on some standard probability space for 
    which the cross section equivalence relations are stably isomorphic. Using this characterization, we were able to
    prove the following result. For countable groups it was proven by \citeauthor{Sako2009} in \cite{Sako2009}.
    \begin{theoremletter}\label{bi-exact.measure equivalent}
        The class $\mathcal S$ is closed under measure equivalence.
    \end{theoremletter}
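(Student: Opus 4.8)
The plan is to reduce everything to transferring the defining map of class $\mathcal S$ across a measure equivalence coupling. First I would dispose of exactness: it is already known to be a measure equivalence invariant by \cite[Corollary~2.9]{DL2015propA} together with \cite[Theorem~0.1~(6)]{Deprez2014permanence}, so throughout we may assume exactness on both sides and only worry about producing the almost-equivariant map into probability measures. Since measure equivalence is symmetric, it suffices to show that if $G$ is in class $\mathcal S$ and $G$ is measure equivalent to $H$, then $H$ admits a map as in \cref{bi-exact.map to Prob(beta^luG)}; working with that Borel formulation (rather than the norm-continuous one of the Definition) is what makes the transfer feasible, and a convolution by an approximate identity on $H$ at the very end will upgrade Borel to norm-continuous if desired.

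Next I would make the coupling explicit. Using the description of measure equivalence through cross section equivalence relations (from \cite{Koivisto2017measure, KKR2018non-unimodular}) — equivalently, working directly with a coupling $(\Omega,\mu)$ carrying commuting $G$- and $H$-actions and finite-measure fundamental domains $X$ (for the $G$-action) and $Y$ (for the $H$-action) — one extracts Borel cocycles $\gamma\colon H\times X\to G$ and $\theta\colon G\times X\to H$, where $\gamma(h,x)$ records the $G$-coordinate of $hx$ in $X$ and $\theta(g,x)$ the $H$-coordinate of $gx$ in $Y$. Since $CX\cong C\times X$ under the Borel isomorphism $G\times X\cong\Omega$, we get $\mu(CX)=\mu_G(C)\,\mu(X)<\infty$ for every compact $C\subseteq G$, and Fubini then gives, for a.e.\ $x$, that $\{h\in H:\gamma(h,x)\in C\}$ has finite Haar measure. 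Thus the coupling is automatically ``proper'': for a.e.\ base point, $\gamma(h,x)\to\infty$ in $G$ as $h\to\infty$ in $H$. This is the feature that will guarantee that the transferred map genuinely escapes to infinity.

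With $\eta_G\colon G\to\Prob(G)$ given by the Definition, I would set, for $h\in H$,
\[
    \eta_H(h)\;=\;\int_X \big(g\mapsto\theta(g,x)\big)_{*}\,\eta_G\big(\gamma(h,x)\big)\;d\nu(x)\;\in\;\Prob(H)\subseteq\Prob(\beta^{lu}H),
\]
where $\nu$ is the normalized restriction of $\mu$ to $X$; this is a norm-bounded, Borel family of probability measures. To verify the bi-almost-invariance $\norm{\eta_H(h_1 k h_2)-h_1\cdot\eta_H(k)}\to 0$ as $k\to\infty$, uniformly for $h_1,h_2$ in compacts, one expands $\gamma(h_1 k h_2,x)$ via the cocycle identity into $\gamma(h_1,\sigma_{kh_2}x)\,\gamma(k,\sigma_{h_2}x)\,\gamma(h_2,x)$, uses the corresponding cocycle identity for $\theta$ to turn the outer left translation by $h_1$ into a $G$-translation under the pushforward, and then feeds this into the almost-invariance of $\eta_G$, concluding with dominated convergence over the finite measure space $(X,\nu)$. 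By \cref{bi-exact.map to Prob(beta^luG)}, together with the exactness of $H$, this yields $H\in\mathcal S$.

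The genuinely delicate point — the place where more than bookkeeping is needed — is that the sets $\{\gamma(h_i,x):h_i\in K_i,\ x\in X\}$ need not be precompact in $G$, so the ``boundary'' cocycle factors $\gamma(h_1,\cdot)$ and $\gamma(h_2,\cdot)$ are only controlled pointwise in $x$, not uniformly; the almost-invariance of $\eta_G$ is uniform only over compact ranges of its side arguments. The remedy is an Egorov/Lusin-type splitting: using continuity of $h\mapsto\gamma(h,\cdot)$ in measure and compactness of the $K_i$, for every $\varepsilon$ one finds a compact $C\subseteq G$ and a set $X_0\subseteq X$ with $\nu(X\setminus X_0)<\varepsilon$ on which $\gamma(h_i,x)\in C$ for all $h_i\in K_i$; on $X_0$ the uniform control applies, and the contribution of $X\setminus X_0$ is bounded in total variation by $2\nu(X\setminus X_0)<2\varepsilon$. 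A secondary, locally-compact-specific subtlety is that the coupling need not be probability-measure-preserving when $G$ and $H$ are non-unimodular, so one must work with the module/Radon–Nikodym data of \cite{KKR2018non-unimodular} and check that the averaging above is still well defined, norm-bounded, and compatible with the cocycle manipulations; this is routine but needs to be addressed explicitly.
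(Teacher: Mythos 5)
Your overall strategy -- transferring the map $\eta_G$ directly across a coupling via the cocycles $\gamma$ and $\theta$ -- is a genuinely different route from the paper, which instead introduces a notion of property (S) for countable non-singular equivalence relations, shows it is equivalent to property (S) of the group via cross section equivalence relations, and shows it passes to restrictions. However, your sketch has a genuine gap at its analytic core. The step that makes everything work is that $\gamma(k,h_2\cdot x)$ must leave every compact subset $L\subseteq G$, for $x$ in a set of measure $\ge 1-\varepsilon$ and uniformly in $h_2\in K$, once $k$ leaves a suitable compact subset of $H$. You derive ``properness'' from the observation that $\set{h\in H}{\gamma(h,x)\in C}$ has finite Haar measure for a.e.\ $x$, and conclude that $\gamma(h,x)\to\infty$ as $h\to\infty$. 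For non-discrete $H$ this inference is false: a subset of finite Haar measure need not be relatively compact, so finiteness of the measure does not force the cocycle to escape compacta. (This is precisely a point where the discrete argument does not carry over.) The statement you need is of the form $\nu\big(\set{x}{\gamma(k,x)\in L}\big)\to 0$ as $k\to\infty$ in $H$, which amounts to $\mu(kX\cap L^{-1}X)\to 0$; this is true, but it requires combining integrability of $k\mapsto\mu(kX\cap L^{-1}X)$ over $H$ with its uniform continuity (coming from $\mu(X\bigtriangleup hX)\to 0$ as $h\to e$) -- an argument absent from your proposal -- and one must then still make the resulting exceptional compact set in $H$ uniform over $h_2\in K$ and over the large-measure set of $x$. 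The paper sidesteps exactly this difficulty by only ever requiring the ``bad set'' $\set{(x,y)\in\mathcal R}{\norm{\eta(\varphi(x),\psi(y))-\eta(x,y)}\ge\varepsilon}$ to be \emph{locally bounded}, a condition that converts back and forth between bounded subsets of the cross section relation and compact subsets of $G$ at the cost of an $\varepsilon$ of measure (\cref{cer.cross section.easy}), and that is manifestly stable under stable isomorphism of relations.

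A second, smaller issue is the claim that the cocycle identity for $\theta$ ``turns the outer left translation by $h_1$ into an $H$-translation under the pushforward.'' Pushing forward a translate $g_0\cdot\mu$ under $g\mapsto\theta(g,x)$ sends a point mass at $g_0g$ to $\theta(g_0,g\cdot x)\theta(g,x)$, and the factor $\theta(g_0,g\cdot x)$ depends on $g$; it is not a fixed element $h_1$ of $H$. So the identity $\big(\theta(\cdot,x)\big)_*(g_0\cdot\mu)=h_1\cdot\big(\theta(\cdot,x)\big)_*\mu$ does not hold as written, and controlling the discrepancy again requires restricting to a large-measure set on which the relevant cocycle values stay in a fixed compact set -- essentially re-deriving the bookkeeping that the paper's Proposition~\ref{me.prop(S).cross section} carries out with the covolume estimate. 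In short: the architecture of your argument could be made to work, but the two points you dismiss as ``routine'' (properness of the cocycle in the locally compact setting, and the compatibility of the pushforward with translation) are exactly where the substance of the proof lies, and as justified in the proposal they do not go through.
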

    
    In \cite{Brothier2017rigidity}, the author proved together with Brothier and Vaes that the group von Neumann 
    algebra $L(G)$ is solid whenever $G$ is a locally compact group in class $\mathcal S$. In particular, when 
    $L(G)$ is also a 
    nonamenable factor, it follows that $L(G)$ is prime. Combining \cref{bi-exact.equivalent} with the unique prime 
    factorization results of \citeauthor{Houdayer2017unique}
    in \cite{Houdayer2017unique}, we were able to obtain the following unique prime factorization results for (tensor 
    products of) such group von Neumann algebras.
    \begin{theoremletter}\label{bi-exact.UPF.source and target}
        Let $G_1, \dots, G_n$ and $H_1, \dots, H_m$ be lcsc groups in class $\mathcal S$. Assume that all 
        $L(G_i)$ and $L(H_i)$ are nonamenable factors. Let $G = G_1\times\dots\times G_n$ and 
        $H = H_1\times \dots \times H_m$. Then,
        $L(G) = L(G_1)\otimes \dots \otimes L(G_n)$ is stably isomorphic to $L(H) = L(H_1)\otimes \dots \otimes L(H_m)$
        if and only if $m = n$ and (after relabeling) $L(G_i)$ is stably isomorphic to $L(H_i)$ for $i = 1,\dots,n$.
    \end{theoremletter}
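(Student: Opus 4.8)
The plan is to reduce the statement to the unique prime factorization theorem of \citeauthor{Houdayer2017unique} in \cite{Houdayer2017unique}, the bulk of the work being the verification that each of the factors $L(G_i)$ and $L(H_j)$ satisfies the hypotheses of that theorem. First I note that, since $G_i$ is lcsc, the Hilbert space $L^2(G_i)$ is separable, so $L(G_i)$ has separable predual, and by hypothesis $L(G_i)$ is a nonamenable factor. Next, \cref{bi-exact.equivalent} gives, from membership of $G_i$ in class $\mathcal S$, a topologically amenable action of $G_i\times G_i$ on $C_b^u(G_i)/C_0(G_i)$ --- equivalently, of $G_i$ by left translation on the boundary $\nu^uG_i$, which is small at infinity. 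I would then argue, as in the proof of solidity of $L(G_i)$ in \cite{Brothier2017rigidity}, that this amenable boundary action yields the relevant (possibly non-tracial) version of Ozawa's condition~(AO) for the inclusion $L(G_i)\subseteq B(L^2(G_i))$, and hence places $L(G_i)$ in the class of factors to which the unique prime factorization theorem of \cite{Houdayer2017unique} applies; in particular, as recorded in the introduction, $L(G_i)$ is prime. The same remarks apply verbatim to every $L(H_j)$.

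For the forward direction I would write $L(G) = L(G_1)\otimes\dots\otimes L(G_n)$ and $L(H) = L(H_1)\otimes\dots\otimes L(H_m)$ and note that, by the previous paragraph, both are tensor products of nonamenable prime factors belonging to the relevant class. Applying the unique prime factorization theorem of \citeauthor{Houdayer2017unique} to the tensor decomposition of $L(G)$ together with the prime factorization of $L(H)$ then gives $n = m$ and a permutation $\sigma$ of $\{1,\dots,n\}$ with $L(G_i)$ stably isomorphic to $L(H_{\sigma(i)})$ for every $i$; relabeling the $H_j$ accordingly finishes this direction.

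The converse is elementary: assuming $n = m$ and $L(G_i)\otimes B(\ell^2)\cong L(H_i)\otimes B(\ell^2)$ for each $i$, I would tensor these isomorphisms to obtain $L(G)\otimes(B(\ell^2)\otimes\dots\otimes B(\ell^2))\cong L(H)\otimes(B(\ell^2)\otimes\dots\otimes B(\ell^2))$ and use that a finite tensor power of $B(\ell^2)$ is again isomorphic to $B(\ell^2)$ to conclude that $L(G)$ and $L(H)$ are stably isomorphic.

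The main obstacle, I expect, is not the tensor-product bookkeeping but the first step: extracting, in the generality required by \cite{Houdayer2017unique}, the precise analytic input from the bare topological amenability of the boundary action $G_i\acts\nu^uG_i$ supplied by \cref{bi-exact.equivalent}. This is most delicate when $G_i$ is non-unimodular, so that $L(G_i)$ is a type~III factor: one then needs the non-tracial refinement of condition~(AO) (Isono's condition $(\mathrm{AO})^{+}$) together with the corresponding deformation--rigidity machinery for type~III factors, carried out on the continuous cores, and \emph{stably isomorphic} must accordingly be understood as isomorphism after tensoring with $B(\ell^2)$. The required translation is morally present in \cite{Brothier2017rigidity}, but I would isolate it as a preparatory lemma before invoking \cite{Houdayer2017unique}.
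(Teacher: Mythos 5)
Your overall architecture is exactly the paper's: both directions reduce to the unique prime factorization theorems of \citeauthor{Houdayer2017unique} in \cite{Houdayer2017unique}, the converse is the elementary tensor bookkeeping with $B(\ell^2)\otimes\dots\otimes B(\ell^2)\cong B(\ell^2)$ that you describe, and the only substantive mathematics is placing each $L(G_i)$ in the class of factors to which those theorems apply. You correctly identify that last step as the real obstacle — but you then defer it (``I would isolate it as a preparatory lemma''), and that deferred lemma is in fact the entire content of the paper's proof. So as written the proposal is a correct plan with its one nontrivial step left open.

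For the record, here is how the paper closes that gap, and it is more elementary than the route you gesture at. The condition to verify is the \emph{strong condition (AO)} of \cite[Definition~2.6]{Houdayer2017unique}: one needs an exact, $\sigma$-weakly dense \C*-algebra $A\subseteq L(G)$ and a nuclear \C*-algebra $\mathcal C\subseteq B(L^2(G))$ containing $A$ with $[c,JaJ]\in K(L^2(G))$ for all $c\in\mathcal C$, $a\in A$. The paper takes $A=\Cred(G)$ and $\mathcal C=\pi\big(C(h^uG)\rtimes_r G\big)$, where $\pi$ is induced by the inclusion $C(h^uG)\subseteq C_b^u(G)\subseteq B(L^2(G))$ and the left regular representation. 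Nuclearity of $\mathcal C$ comes from item \ref{bi-exact.equivalent.compactification} of \cref{bi-exact.equivalent} — amenability of $G\acts h^uG$, the \emph{compactification}, not merely the boundary $\nu^uG$ (this is precisely why the paper proves \ref{bi-exact.equivalent.boundary}$\Leftrightarrow$\ref{bi-exact.equivalent.compactification}) — combined with \cite[Theorem~5.3]{Anantharaman-Delaroche2000}. Compactness of the commutators is then a direct kernel computation using that $h^uG$ is small at infinity, i.e.\ $\rho_u h-h\in C_0(G)$ for $h\in C(h^uG)$. Two further remarks: your appeal to the solidity argument of \cite{Brothier2017rigidity} and to a non-tracial $(\mathrm{AO})^{+}$ with continuous-core deformation/rigidity is unnecessary overhead — \cite[Theorems~A~and~B]{Houdayer2017unique} already package the type III analysis once strong condition (AO), nonamenability and separable predual are in hand, so no work on cores is needed on your side. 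And a small correction to your first paragraph: topological amenability of $G_i\acts\nu^uG_i$ and of $G_i\times G_i\acts C_b^u(G_i)/C_0(G_i)$ are equivalent only through the full chain of equivalences in \cref{bi-exact.equivalent}; the nuclearity input you actually need is the statement about $h^uG_i$, so you should cite that item specifically.
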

    
    \begin{theoremletter}\label{bi-exact.UPF.source}
        Let $G_1,\dots, G_n$ be lcsc groups in class $\mathcal S$ whose group von Neumann algebras are nonamenable 
        factors. Let 
        $G = G_1\times \dots\times G_n$. Let $N_1, \dots, N_m$ be non-type I factors possessing a state with large 
        centralizer. Then, if $L(G)\cong N_1\otimes \dots \otimes N_m$, we have $m\le n$. If moreover $n=m$, then (after
        relabeling) $L(G_i)$ is stably isomorphic to $N_i$ for $i = 1,\dots,n$.
    \end{theoremletter}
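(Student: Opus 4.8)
The plan is to derive this from the unique prime factorization theorem of \citeauthor{Houdayer2017unique} \cite{Houdayer2017unique}. In its abstract form that theorem takes as input a tensor product $M = M_1\otimes\dots\otimes M_n$ of non-amenable factors with separable predual that are bi-exact von Neumann algebras in the sense of \cite{Houdayer2017unique}, and controls every decomposition of a stable isomorph of $M$ into non-type~I factors possessing a state with large centralizer: it produces the inequality on the numbers of tensor factors, together with a matching (up to stable isomorphism, after a permutation) when the two numbers coincide. The conditions imposed on $N_1,\dots,N_m$ in the statement are exactly those demanded of the target factors, so the proof reduces to checking that $M_i := L(G_i)$ satisfies the conditions demanded of the source factors.

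Of these, separability of $L(G_i)_*$ is automatic because $G_i$ is second countable, and $L(G_i)$ is a non-amenable factor by hypothesis, so the substantive point is that $L(G_i)$ is a bi-exact von Neumann algebra. Here I would use \cref{bi-exact.equivalent}: since $G_i$ belongs to class $\mathcal S$, the action $G_i\times G_i\acts C_b^u(G_i)/C_0(G_i)$ by left and right translation is topologically amenable (equivalently, $G_i\acts\nu^uG_i$ is topologically amenable), and together with the exactness of $G_i$ this is exactly the topological datum out of which the von Neumann algebraic bi-exactness of $L(G_i)$ is built: amenability of the boundary action yields a $G_i\times G_i$-equivariant unital completely positive splitting of $C_b^u(G_i)/C_0(G_i)\hookrightarrow B(L^2(G_i))/K(L^2(G_i))$, while exactness makes available the weakly dense exact subalgebra $\Cred(G_i)\subseteq L(G_i)$ on which the resulting maps are to be assembled. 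This is in essence the construction carried out in \cite{Brothier2017rigidity} to prove that $L(G)$ is solid for $G$ in class $\mathcal S$; the same ingredients yield the stronger bi-exactness needed here. (Note that one genuinely needs the boundary action: as recalled in the introduction, for every connected group there already is a unital completely positive map $\Cred(G_i)\otmin\Cred(G_i)\to B(L^2(G_i))$ which is multiplicative modulo $K(L^2(G_i))$, so that condition alone cannot detect class $\mathcal S$.)

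With these conditions in hand, the statement follows by applying \cite{Houdayer2017unique} to the identification $L(G) = L(G_1)\otimes\dots\otimes L(G_n)\cong N_1\otimes\dots\otimes N_m$: the left-hand side is a tensor product of non-amenable bi-exact factors with separable predual, hence a factor, and the right-hand side is a tensor product of non-type~I factors with a state with large centralizer, so the cited theorem gives $m\le n$ and, when $m = n$, a permutation $\sigma$ of $\{1,\dots,n\}$ for which $L(G_i)$ is stably isomorphic to $N_{\sigma(i)}$. Relabeling the $N_j$ accordingly gives the statement.

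The main obstacle is the step in the second paragraph: matching the group-theoretic notion of class $\mathcal S$ (as reformulated in \cref{bi-exact.equivalent}) with the operator-algebraic notion of a bi-exact von Neumann algebra used in \cite{Houdayer2017unique}. In the countable case this correspondence is a theorem of Ozawa, but---as the connected-group remark above illustrates---its naive reformulation at the level of $C^*$-algebras fails for locally compact groups, so one is forced to work through the compactification $h^uG_i$ and its small-at-infinity property. I expect this to be handled by quoting the relevant computations of \cite{Brothier2017rigidity}, where the translation is effectively performed en route to solidity, rather than by a fresh estimate.
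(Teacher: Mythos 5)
Your proposal follows the same route as the paper: both reduce the statement to \cite[Theorems~A and~B]{Houdayer2017unique} and verify the operator-algebraic hypothesis on each $L(G_i)$ (the paper calls it membership in $\mathcal C_{(AO)}$, i.e.\ the strong condition (AO)) using \cref{bi-exact.equivalent}, specifically the amenability of $G_i\acts h^uG_i$ together with exactness of $G_i$. The only difference is that where you defer the verification to the computations of \cite{Brothier2017rigidity}, the paper carries it out explicitly, taking $A=\Cred(G)$ and $\mathcal C$ the image of $C(h^uG)\rtimes_r G$ in $B(L^2(G))$ (nuclear by amenability of the action) and checking by hand that the commutators $[c,J\lambda(f)J]$ are compact, which is exactly where the small-at-infinity property of $h^uG$ enters.
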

    We prove these two theorems by proving that for groups $G$ in class $\mathcal S$, the group von Neumann 
    algebra $L(G)$ belongs to the class $\mathcal C_{(AO)}$ introduced in \cite{Houdayer2017unique}. 
    
    It is worthwhile to note that for many locally compact groups $G$, the group von Neumann algebra $L(G)$ is amenable 
    or even type I. For instance, the group von Neumann algebra of a connected lcsc group is always amenable by \cite[Corollary~6.9]{Connes1976}.
    However, the following group $G$ due to \citeauthor{Suzuki2016elementary} provides an example of a locally compact 
    group whose group von Neumann algebra $L(G)$ is a nonamenable type II$_\infty$ factor. 
    \begin{exampleletter}[\citeauthor{Suzuki2016elementary}]
        Let $\mathbb Z_2 = \mathbb Z / 2\mathbb Z$ act on $\mathbb F_2$ by flip of the generators.
        Then the compact group $K = \prod_{k\in \mathbb N} \mathbb Z_2$ acts on the infinite free product 
        $H = \freeprod_{k\in \mathbb N} \mathbb F_2$ by letting the $k$\textsuperscript{th} component of $K$ flip the 
        generators 
        in the $k$\textsuperscript{th} component of $H$. The semi-direct product $G = H\rtimes K$ satisfies the 
        conditions of 
        \cite[Proposition]{Suzuki2016elementary} with $K_n = \prod_{k={n+1}}^\infty \mathbb Z_2$ and 
        $L_n = (\freeprod_{k=0}^n \mathbb F_2)\rtimes K$. Hence, by 
        \cite[section on Group von Neumann algebras]{Suzuki2016elementary}, its group von Neumann algebra is a 
        nonamenable factor of type II$_\infty$. Moreover, $G$ belongs to class $\mathcal S$ since $G$ is measure 
        equivalent to $H$ and $H$ belongs to class $\mathcal S$.
    \end{exampleletter}
    
    Furthermore, certain classes of groups acting on trees have nonamenable group von Neumann algebras by 
    \cite[Theorem~C and~D]{HoudayerRaum2015asymptotic}.
    Also, \cite[Theorem~E and~F]{Raum2015c*simplicity} would provide conditions on such groups under which $L(G)$
    is a nonamenable factor. In particular, for every $q\in \mathbb Q$ with $0 < q < 1$ 
    \cite[Theorem~G]{Raum2015c*simplicity} would provide examples of groups in class $\mathcal S$ for which the group
    von Neumann algebra is a nonamenable factor of type III$_q$.
    However, due to a mistake in \cite[Lemma~5.1]{Raum2015c*simplicity}
    in that paper, there is a gap in the proofs of these results (see also 
    \cite[p~20]{Raum2019bourbaki}), and it is currently not completely clear whether these results hold as stated 
    there. 
    
    \paragraph{Acknowledgments} The author would like to thank Arnaud Brothier and Stefaan Vaes for interesting 
    discussions
    and helpful comments.
    
    \section{Preliminaries and notation}
    Throughout this article, we will assume all groups to be locally compact and second countable. We denote by 
    $\lambda_G$ the left Haar measure on such a group $G$.
    All topological spaces are assumed to be locally compact and Hausdorff. All 
    actions $G\acts X$ are assumed to be continuous.
    
    Let $X$ be a locally compact space. We denote by $M(X)$ the space of 
    complex Radon measures on $X$. We can equip this space with
    the norm of total variation, or alternatively with the weak* topology when viewing it as the dual space of $C_0(X)$.
    The Borel structure from both topologies agree.
    We denote by $M(X)^+$ the space of positive Radon measures and $\Prob(G)$ the space of Radon probability 
    measures. Suppose that a group $G$ acts on $X$, then for $g\in G$ and $\mu\in M(X)$ we denote by $g\cdot \mu$ the 
    measure defined by 
    $(g\cdot \mu)(E) = \mu(g^{-1}E)$ for all Borel sets $E\subseteq X$.
    
    \subsection{Topological amenability}
    We recall from \cite{Anantharaman-Delaroche2000} the notion of topological amenability for actions of 
    locally compact groups.
    \begin{definition}
        Let $G$ be a lcsc group, $X$ a locally compact space and $G\acts X$ a continuous action. We say that 
        $G\acts X$ is \imph[amenable!topological,topologically amenable]{(topologically) amenable} if there exists
        a net of weakly* continuous maps $\mu_i: X\to \Prob(G)$ satisfying
        \begin{align}
            \lim_{i} \norm{g\cdot \mu_i(x) - \mu_i(gx)}= 0
            \label{eq:def.amenability}
        \end{align}
        uniformly on compact sets for $x\in X$ and $g\in G$.
    \end{definition}

    By \cite[Proposition~2.2]{Anantharaman-Delaroche2000}, we have the following equivalent characterization.
    \begin{proposition}\label{amenable action.equivalent}
        Let $G$ be a lcsc group, $X$ a locally compact space and $G\acts X$ a continuous action. Then, the following 
        are equivalent
        \begin{equivalent}
            \item $G\acts X$ is amenable
            \item There exists a net $(f_i)_i$ in $C_c(X\times G)^+$ satisfying 
            \[\lim_{i}\int_G f_i(x,s)\dif s = 1\] 
            uniformly on compact sets for $x\in X$ and
            \begin{align}
                \lim_{i} \int_G |f_i(x,g^{-1}s) - f_i(gx,s)|\dif s = 0
                \label{amenable action.equivalent.to zero}
            \end{align}
            uniformly on compact sets for $x\in X$ and $g\in G$.
            \label{amenable action.equivalent.continuous compactly supported}
        \end{equivalent}
    \end{proposition}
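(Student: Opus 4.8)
The plan is to prove the two implications of the equivalence separately.

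\emph{The net $(f_i)$ exists $\Rightarrow$ $G\acts X$ is amenable.} This is the routine direction. Fix a probability density $\varphi_0\in C_c(G)^+$ with $\int_G\varphi_0=1$ and replace $f_i$ by $f_i+\varepsilon_i\varphi_0$ for some $\varepsilon_i>0$ with $\varepsilon_i\to 0$ (a vanishing perturbation, valid after a harmless reindexing of the net, that affects neither of the two limits); thus, after renaming, we may assume $c_i(x):=\int_G f_i(x,s)\,\dif s\ge\varepsilon_i>0$ for every $x$, with $c_i$ continuous in $x$ and still converging to $1$ uniformly on compact sets. Put $\mu_i(x):=c_i(x)^{-1}f_i(x,\cdot)\,\dif s\in\Prob(G)$; this is norm-continuous in $x$ because $c_i\ge\varepsilon_i>0$. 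Using the identity $g\cdot(h\,\dif s)=h(g^{-1}\cdot)\,\dif s$ together with the left-invariance of Haar measure, a short computation bounds $\norm{g\cdot\mu_i(x)-\mu_i(gx)}$ in terms of $\norm{f_i(x,g^{-1}\cdot)-f_i(gx,\cdot)}_1$ and $|c_i(x)-c_i(gx)|$, both of which are dominated by $\int_G|f_i(x,g^{-1}s)-f_i(gx,s)|\,\dif s$ (the factors $c_i(x)^{-1}$ being bounded on compact sets). The latter tends to $0$ uniformly on compact sets by hypothesis, so $\norm{g\cdot\mu_i(x)-\mu_i(gx)}\to 0$ uniformly on compact sets, i.e.\ $G\acts X$ is amenable.

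\emph{$G\acts X$ is amenable $\Rightarrow$ the net $(f_i)$ exists.} Start from weak*-continuous maps $\mu_i\colon X\to\Prob(G)$ with $\norm{g\cdot\mu_i(x)-\mu_i(gx)}\to 0$ uniformly on compact sets. I first smooth, then truncate. Fix $\varphi\in C_c(G)^+$ with $\int_G\varphi=1$ and pass to $\mu_i(x)*\varphi$; convolving on the \emph{right} is the correct choice, because it commutes with the left-translation action, $g\cdot(\mu_i(x)*\varphi)=(g\cdot\mu_i(x))*\varphi$, whence $\norm{g\cdot(\mu_i(x)*\varphi)-\mu_i(gx)*\varphi}\le\norm{g\cdot\mu_i(x)-\mu_i(gx)}$. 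Moreover $\mu_i(x)*\varphi$ has density $h_i(x,u)=\int_G\varphi(t^{-1}u)\,\dif\mu_i(x)(t)$, which is jointly continuous on $X\times G$: for fixed $u$, the function $t\mapsto\varphi(t^{-1}u)$ lies in $C_c(G)$, so $x\mapsto h_i(x,u)$ is continuous, and $|h_i(x,u)-h_i(x,v)|\le\norm{\rho_{v^{-1}u}\varphi-\varphi}_\infty$ uniformly in $x$. For the truncation I use the elementary fact that every weak*-compact subset $S$ of $\Prob(G)$ is tight: were it not, $\sigma$-compactness of $G$ would provide an exhaustion $C_1\subseteq C_2\subseteq\cdots$ and measures $\mu_n\in S$ with $\mu_n(G\setminus C_n)\ge\varepsilon_0$, and testing a weak*-limit point $\mu\in S$ of $(\mu_n)$ against a sequence $f_m\in C_c(G)$ increasing to $1$ would give $\int_G f_m\,\dif\mu\le 1-\varepsilon_0$ for every $m$, contradicting $\int_G f_m\,\dif\mu\uparrow\norm{\mu}=1$. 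Applying this to $\{\mu_i(x):x\in K\}$ (a continuous image of the compact set $K$), and using that convolution with $\varphi$ preserves tightness, we obtain for each compact $K\subseteq X$ and each $\varepsilon>0$ a compact $C\subseteq G$ with $\int_{G\setminus C}h_i(x,u)\,\dif u<\varepsilon$ for all $x\in K$. Now index the desired net by triples $\alpha=(i,K,\varepsilon)$, with $i$ increasing, $K\subseteq X$ compact increasing, and $\varepsilon>0$ decreasing; for each $\alpha$ choose $\chi_\alpha\in C_c(G)$ with $0\le\chi_\alpha\le 1$ and $\chi_\alpha\equiv 1$ on the associated $C$, and $\psi_\alpha\in C_c(X)$ with $0\le\psi_\alpha\le 1$ and $\psi_\alpha\equiv 1$ on $K$, and set $f_\alpha(x,u)=\psi_\alpha(x)\,\chi_\alpha(u)\,h_i(x,u)\in C_c(X\times G)^+$.

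It remains to check the two conditions. If $x$ lies in a fixed compact set and $\alpha$ is large, then $\psi_\alpha(x)=1$, so $\int_G f_\alpha(x,u)\,\dif u=\int_G\chi_\alpha(u)h_i(x,u)\,\dif u\in(1-\varepsilon,1]$, using $\int_G h_i(x,u)\,\dif u=1$ and $\int_{G\setminus C}h_i(x,u)\,\dif u<\varepsilon$; this gives the first condition. For the second, abbreviate $h=h_i$ and, by inserting $\psi_\alpha(x)\chi_\alpha(g^{-1}s)h(gx,s)$ and $\psi_\alpha(x)\chi_\alpha(s)h(gx,s)$, bound $|f_\alpha(x,g^{-1}s)-f_\alpha(gx,s)|$ by
\[
\chi_\alpha(g^{-1}s)\,|h(x,g^{-1}s)-h(gx,s)| + h(gx,s)\,|\chi_\alpha(g^{-1}s)-\chi_\alpha(s)| + \chi_\alpha(s)\,h(gx,s)\,|\psi_\alpha(x)-\psi_\alpha(gx)|.
\]
Integrating in $s$: the first summand gives at most $\norm{(g\cdot\mu_i(x))*\varphi-\mu_i(gx)*\varphi}\le\norm{g\cdot\mu_i(x)-\mu_i(gx)}$, which $\to 0$ uniformly on compact sets; the third gives $|\psi_\alpha(x)-\psi_\alpha(gx)|$, which vanishes once $K$ contains $K_X\cup K_GK_X$ for the compact sets $K_X\subseteq X$ and $K_G\subseteq G$ under consideration. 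The middle summand is the main obstacle: since $\chi_\alpha\equiv 1$ on $C$, the factor $|\chi_\alpha(g^{-1}s)-\chi_\alpha(s)|$ vanishes for $s\in gC\cap C$, so the middle summand integrates to at most $\int_{G\setminus C}h(gx,s)\,\dif s+\int_{G\setminus gC}h(gx,s)\,\dif s$; the first piece is $<\varepsilon$ by the tightness choice (as $gx\in K$), and a change of variables rewrites the second as $\bigl((g^{-1}\cdot\mu_i(gx))*\varphi\bigr)(G\setminus C)\le(\mu_i(x)*\varphi)(G\setminus C)+\norm{g^{-1}\cdot\mu_i(gx)-\mu_i(x)}<\varepsilon+o(1)$, where $\norm{g^{-1}\cdot\mu_i(gx)-\mu_i(x)}\to 0$ uniformly on compact sets by the amenability estimate applied to the element $g^{-1}$ at the point $gx$. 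Hence $\int_G|f_\alpha(x,g^{-1}s)-f_\alpha(gx,s)|\,\dif s\le o(1)+2\varepsilon$, which tends to $0$ uniformly on compact sets, completing the proof.
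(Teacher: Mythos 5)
The paper does not actually prove this proposition; it is imported verbatim as a citation of \cite[Proposition~2.2]{Anantharaman-Delaroche2000}, so there is no in-paper argument to compare against. Your proof is correct and self-contained, and it follows the standard strategy behind that reference: for the easy direction, normalize the densities after a small strictly positive perturbation (which is genuinely needed, since $\int_G f_i(x,s)\dif s$ vanishes for $x$ outside the compact support of $f_i$); for the converse, regularize by convolving on the right with a fixed $\varphi\in C_c(G)^+$ --- correctly chosen because right convolution commutes with the left-translation action and is a contraction in total variation --- and then cut off. The two points that carry the real weight are both handled properly: the joint continuity of $(x,u)\mapsto\int_G\varphi(t^{-1}u)\dif\mu_i(x)(t)$ (weak* continuity in $x$ plus the uniform estimate $\norm{\rho_{v^{-1}u}\varphi-\varphi}_\infty$ in $u$), and the tightness of weak*-compact subsets of $\Prob(G)$, whose proof via an exhaustion and a limit point \emph{inside} $\Prob(G)$ (so of total mass $1$) is exactly right; metrizability of bounded sets in $C_0(G)^*$, available since $G$ is second countable, justifies the passage to a subsequence. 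The estimate of the cut-off error by $\int_{G\setminus C}h(gx,s)\dif s+\int_{G\setminus gC}h(gx,s)\dif s$, with the second piece controlled by applying the amenability estimate to $g^{-1}$ at the point $gx$, closes the argument. I see no gap; only cosmetic remarks apply (e.g., after adding $\eps_i\varphi_0$ the function is no longer compactly supported in the $X$-variable, but as you implicitly use, compact support is irrelevant in that direction since only the maps $\mu_i$ are needed).
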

    \begin{remark}
        Obviously, when $X$ is $\sigma$-compact, we can replace nets by sequences in the above definition and 
        proposition.
    \end{remark}
    \begin{remark}\label{amenable action.equivalent.compact}
        One can check that if $X$ is a compact space, then we can take a sequence $(f_n)_n$ in $C_c(X\times G)^+$ 
        satisfying 
        $\int_G f_n(x,s)\dif s = 1$ for every $x\in X$ and every $n\in \mathbb N$ and such that
        \eqref{amenable action.equivalent.to zero} holds. 
        
    \end{remark}
    
    The following result shows that if $X$ is a $\sigma$-compact space, then one can assume that the convergence in 
    \eqref{eq:def.amenability} is uniform
    on the whole space $X$, instead of only uniform on compact sets of $X$.
    \begin{proposition}\label{amenable action.uniform on space}
        Let $G$ be a lcsc group, $X$ a $\sigma$-compact space and $G\acts X$ a continuous action. The action $G\acts X$ 
        is amenable if and only if there exists a sequence of weakly* continuous maps $\mu_n: X\to \Prob(G)$ satisfying
        \[\lim_{n\to \infty} \norm{g\cdot \mu_n(x) - \mu_n(gx)} = 0\]
        uniformly on $x\in X$ and uniformly on compact sets for $g\in G$.
    \end{proposition}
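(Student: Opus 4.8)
The backward implication is immediate: convergence uniform on all of $X$ is in particular uniform on every compact subset, which by the remark preceding the statement is exactly the definition of amenability for $\sigma$-compact $X$. So everything is in the forward implication. Assume $G\acts X$ is amenable and, using the remark that nets may be replaced by sequences, fix weak*-continuous $\mu_m\colon X\to\Prob(G)$ with $\sup_{x\in K,\,g\in L}\norm{g\cdot\mu_m(x)-\mu_m(gx)}\to 0$ for every compact $K\subseteq X$ and $L\subseteq G$. The plan is to prove the following claim: for every compact symmetric $L\subseteq G$ with $e\in L$ and every $\varepsilon>0$ there is a \emph{single} weak*-continuous $\mu\colon X\to\Prob(G)$ with $\sup_{x\in X,\,g\in L}\norm{g\cdot\mu(x)-\mu(gx)}\le\varepsilon$. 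Granting the claim, fix an exhaustion $L_1\subseteq L_2\subseteq\cdots$ of $G$ by compact symmetric sets containing $e$ and apply the claim with $(L,\varepsilon)=(L_n,1/n)$ to get maps $\mu_n$; since any compact $L\subseteq G$ lies in $L_n$ for all large $n$, the sequence $(\mu_n)_n$ has the property in the statement.

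To prove the claim, fix $L$ and $\varepsilon$. First I would build an exhaustion $X=\bigcup_{j\ge1}K_j$ by compact sets with $K_j\subseteq\inter K_{j+1}$ and, crucially, $L\cdot K_j\subseteq K_{j+1}$ for all $j$ — a routine recursion in which one enlarges the current compact set by $L\cdot K_j$ together with the $j$-th term of a base exhaustion and passes to the closure of a relatively compact open neighbourhood. Choosing Urysohn functions $\theta_j\in C(X,[0,1])$ equal to $1$ on $K_j$ and supported in $\inter K_{j+1}$, set $\psi:=\sum_{j\ge1}(1-\theta_j)$; near any point only finitely many terms are non-constant, so $\psi\colon X\to[0,\infty)$ is continuous, and $\psi(x)\ge m-1$ whenever $x\notin K_m$, so $\psi$ is proper. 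The point of the condition $L\cdot K_j\subseteq K_{j+1}$ is that it makes $\psi$ coarsely Lipschitz along $L$-orbits: if $g\in L$ then, writing $\ell(y)$ for the least $j$ with $y\in K_j$, one has $|\ell(gx)-\ell(x)|\le 1$, so at most two indices $j$ contribute to $\psi(gx)-\psi(x)=\sum_j(\theta_j(x)-\theta_j(gx))$, whence $|\psi(gx)-\psi(x)|\le 2$ for all $x\in X$, $g\in L$.

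Next I would glue the $\mu_m$ along a \emph{coarse} partition of unity built from $\psi$. Fix a large integer $N$ and the $1$-Lipschitz triangle function $\beta(t)=\max(0,1-|t|)$, which satisfies $\sum_{k\in\mathbb Z}\beta(t-k)=1$, and put $\phi_k:=\beta(\psi/N-k)$ for $k\ge0$. Then $(\phi_k)_k$ is a locally finite partition of unity with $\operatorname{supp}\phi_k\subseteq Q_k:=\psi^{-1}([0,(k+1)N])$ (compact, as $\psi$ is proper), and for $g\in L$ at most four indices $k$ have $\phi_k(x)\ne0$ or $\phi_k(gx)\ne0$, each satisfying $|\phi_k(x)-\phi_k(gx)|\le|\psi(x)-\psi(gx)|/N\le 2/N$; hence $\sum_k|\phi_k(x)-\phi_k(gx)|\le 8/N$ for all $x\in X$, $g\in L$. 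By amenability choose, for each $k$, an index $m_k$ with $\sup_{x\in Q_k,\,g\in L}\norm{g\cdot\mu_{m_k}(x)-\mu_{m_k}(gx)}<\varepsilon/2$, and set $\mu:=\sum_{k\ge0}\phi_k\,\mu_{m_k}$; this is a weak*-continuous map into $\Prob(G)$, the sum being locally finite and $\mu(x)$ a pointwise convex combination of probability measures. Then
\[
g\cdot\mu(x)-\mu(gx)=\sum_k\phi_k(x)\bigl(g\cdot\mu_{m_k}(x)-\mu_{m_k}(gx)\bigr)+\sum_k\bigl(\phi_k(x)-\phi_k(gx)\bigr)\mu_{m_k}(gx),
\]
where the first sum has norm $<\varepsilon/2$ because $\phi_k(x)\ne0$ forces $x\in Q_k$, and the second has norm at most $\sum_k|\phi_k(x)-\phi_k(gx)|\le 8/N<\varepsilon/2$ once $N$ is large enough. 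This produces the required $\mu$ and completes the proof.

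I expect the crossing term $\sum_k(\phi_k(x)-\phi_k(gx))\mu_{m_k}(gx)$ to be the main obstacle: since $\mu_{m_k}(gx)$ and $\mu_{m_{k'}}(gx)$ need not be norm-close even for adjacent $k,k'$, one cannot dispose of it by telescoping against $\sum_k(\phi_k(x)-\phi_k(gx))=0$. The entire purpose of introducing the proper function $\psi$, arranging it to vary by at most $2$ along $L$-orbits, and then forming the $\phi_k$ at a coarse scale $N$ is to force $|\phi_k(x)-\phi_k(gx)|$ itself to be uniformly small, which is exactly what controls that term.
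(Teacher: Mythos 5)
Your proof is correct and follows essentially the same strategy as the paper's: both glue locally adapted maps ($\mu_{m_k}$, resp.\ $\nu_k$) along an exhaustion of $X$ chosen so that $L\cdot K_j\subseteq K_{j+1}$, using cutoff functions that vary slowly along $L$-orbits. The only real difference is bookkeeping: you make the crossing term small directly by building a partition of unity that is $2/N$-Lipschitz along $L$-orbits at scale $N$, whereas the paper sums $n$ overlapping plateau functions, absorbs an $O(1)$ boundary error, and then divides by the total mass $\approx n$.
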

    \begin{proof}
        Suppose that $G\acts X$ is amenable. Since $X$ is $\sigma$-compact, it suffices to construct for every compact 
        set $K\subseteq G$ and every $\eps > 0$ a weakly* continuous map $\mu: X\to \Prob(G)$ satisfying
        \begin{align}
            \norm{g\cdot \mu(x) - \mu(gx)} < \eps
            \label{eq:amenable action.uniform on space.goal}
        \end{align}
        for all $g\in K$ and all $x\in X$.
        
        So, fix a compact set $K\subseteq G$ and an $\eps > 0$. Without loss of generality, we can assume that $K$ is 
        symmetric. 
        Take an increasing sequence $(L_n)_{n\ge 1}$ of compact subsets in $X$ such that $X = \bigcup_n L_n$. Since $X$ 
        is locally compact, after 
        inductively enlarging $L_n$, we can assume that $L_n\subseteq \interior(L_{n+1})$ and 
        $gL_n\subseteq L_{n+1}$
        for every $g\in K$. Using the amenability of $G\acts X$, we can take a sequence of weakly*
        continuous maps $\nu_n: X\to \Prob(G)$ satisfying
        \[\norm{g\cdot \nu_n(x) - \nu_n(gx)} < 2^{-n}\]
        for all $g\in K$, $x\in L_n$ and $n\in \mathbb N\setminus \{0\}$. Set $L_n = \emptyset$ for $n\le 0$. 
        Fix $n\ge 1$ such that $18 / n < \eps$ and take 
        continuous functions $f_k: X\to [0,1]$ such that $f_k(x) = 1$ whenever 
        $x\in L_{k}\setminus L_{k-n}$ and $f_k(x) = 0$ whenever $x\in L_{k-n-1}$ or $x\in X\setminus L_{k+1}$.
        
        For every $x\in X$, we denote $|x| = \max\set{k\in \mathbb N}{x\notin L_k}$. Set
        \[\tilde \mu(x) = \sum_{k=0}^\infty f_k(x)\nu_k(x)
                        = f_{|x|}(x)\nu_{|x|}(x) + f_{|x|+n+1}(x) \nu_{|x|+n+1}(x) + \sum_{k=|x|+1}^{|x|+n} \nu_k(x).\]
        for $x\in X$ and define $\mu: X\to \Prob(G): x\mapsto \tilde \mu(x) / \norm{\tilde \mu(x)}$. 
        Clearly, $\mu$ is weakly* continuous. To prove that $\mu$ satisfies 
        \eqref{eq:amenable action.uniform on space.goal}, fix $x\in X$ and $g\in K$. Since 
        $gL_k\subseteq L_{k+1}$ and $g^{-1}L_k\subseteq L_{k+1}$ for every $k\in \mathbb N$, we have $|x| - 1\le |gx|\le |x|+1$ and hence
        \[\norm{g\cdot \tilde \mu(x) - \tilde \mu(gx)} 
           \le 8 + \sum_{k=|x|+1}^{|x|+n} \norm{g\cdot \nu_k(x) - \nu_k(gx)}
           \le 9,\]
        where we used that $g\in K$ and $x\in L_k$ for $k = |x|+1, \dots, |x|+n$.
        Hence,
        \[\norm{g\cdot \mu(x) - \mu(gx)}
           \le \frac{2}{\norm{\tilde \mu(x)}} \norm{g\cdot \tilde \mu(x) - \tilde \mu(gx)} 
           \le \frac{18}{n} < \eps\]
        as was required.
    \end{proof}
    
    The following result can for instance be found in \cite[Exercise~15.2.1]{Brown2008} in the case of discrete groups.
    For completeness, we include a proof for locally compact groups here.
    \begin{lemma}\label{amenable action.X cs Prob(X)}
        Let $G$ be a lcsc group, $X$ a locally compact space and $G\acts X$ a continuous action. Then, $G\acts X$ is 
        amenable if and only if the induced action $G\acts \Prob(X)$ is amenable, where $\Prob(X)$ is equipped with the
        weak* topology.
    \end{lemma}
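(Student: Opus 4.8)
The plan is to prove the two implications separately. One direction --- ``$G\acts\Prob(X)$ amenable $\Rightarrow$ $G\acts X$ amenable'' --- is a one-line pullback argument; the converse is the substantial one and proceeds by barycentric integration combined with a tightness argument. Throughout I use the description of amenability by a net of weakly* continuous maps into $\Prob(G)$, which remains meaningful for the target $\Prob(X)$ even though $\Prob(X)$ need not be locally compact when $X$ is non-compact.

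For the easy implication, the point is that $\iota\colon X\to\Prob(X)$, $x\mapsto\delta_x$, is continuous ($x_j\to x$ forces $\phi(x_j)\to\phi(x)$ for all $\phi\in C_0(X)$, hence $\delta_{x_j}\to\delta_x$ weakly*), $G$-equivariant ($g\cdot\delta_x=\delta_{gx}$), and sends compact sets to compact sets. So if a net $\mu_i\colon\Prob(X)\to\Prob(G)$ witnesses amenability of $G\acts\Prob(X)$, then $x\mapsto\mu_i(\delta_x)$ is weakly* continuous and $\norm{g\cdot\mu_i(\delta_x)-\mu_i(\delta_{gx})}=\norm{g\cdot\mu_i(\delta_x)-\mu_i(g\cdot\delta_x)}\to0$ uniformly for $(x,g)$ in compact subsets of $X\times G$, since $\iota(K)\times Q$ is compact in $\Prob(X)\times G$ for $K\subseteq X$, $Q\subseteq G$ compact. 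Hence $G\acts X$ is amenable.

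For the converse, assume $G\acts X$ is amenable. By \cref{amenable action.equivalent} I pick a net $(f_i)_i$ in $C_c(X\times G)^+$ with $\int_G f_i(x,s)\dif s\to1$ uniformly on compact subsets of $X$ and $\int_G|f_i(x,g^{-1}s)-f_i(gx,s)|\dif s\to0$ uniformly on compact subsets of $X\times G$; after a standard rescaling I may also assume $c_i(x):=\int_G f_i(x,s)\dif s\le1$ for all $x$, keeping both convergences. For $\nu\in\Prob(X)$ I let $\tilde\mu_i(\nu)$ be the measure on $G$ with $L^1$-density $\int_X f_i(x,\cdot)\dif\nu(x)$ with respect to $\lambda_G$ --- the barycenter $\int_X\mu_i^x\dif\nu(x)$ of the measures $\mu_i^x$ with density $f_i(x,\cdot)$. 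The crucial observation is that, because $f_i$ has compact support in $X\times G$, for every $\phi\in C_c(G)$ the function $x\mapsto\int_G f_i(x,s)\phi(s)\dif s$ lies in $C_c(X)$; hence $\nu\mapsto\langle\tilde\mu_i(\nu),\phi\rangle$ is weakly* continuous, and so is $\tilde\mu_i\colon\Prob(X)\to M(G)^+$. By left-invariance of $\lambda_G$, $g\cdot\tilde\mu_i(\nu)$ has density $s\mapsto\int_X f_i(x,g^{-1}s)\dif\nu(x)$ and $\tilde\mu_i(g\nu)$ has density $s\mapsto\int_X f_i(gx,s)\dif\nu(x)$, so
\[\norm{g\cdot\tilde\mu_i(\nu)-\tilde\mu_i(g\nu)}\le\int_X\Bigl(\int_G|f_i(x,g^{-1}s)-f_i(gx,s)|\dif s\Bigr)\dif\nu(x),\qquad \norm{\tilde\mu_i(\nu)}=\int_X c_i(x)\dif\nu(x).\]

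The main obstacle is to promote these convergences, uniform over $x$ in compact subsets of $X$, to convergence uniform over $\nu$ in compact subsets of $\Prob(X)$. Here I use that every weak* compact $\mathcal K\subseteq\Prob(X)$ is uniformly tight: a weak* limit point of a net in $\mathcal K$ lies in $\mathcal K$, hence is still a probability measure and loses no mass at infinity, so by inner regularity there is, for each $\eps>0$, a compact $L\subseteq X$ with $\sup_{\nu\in\mathcal K}\nu(X\setminus L)<\eps$. Splitting the two integrals above over $L$ (where the inner integrand is uniformly small, resp. $c_i$ is close to $1$, by the $X$-uniform estimates) and over $X\setminus L$ (where the inner integrand is at most $c_i(x)+c_i(gx)\le2$, resp. $c_i\le1$, and $\nu(X\setminus L)$ is uniformly small) yields $\norm{\tilde\mu_i(\nu)}\to1$ and $\norm{g\cdot\tilde\mu_i(\nu)-\tilde\mu_i(g\nu)}\to0$ uniformly on compact subsets of $\Prob(X)$ and of $\Prob(X)\times G$ (using also that $\{g\nu:\nu\in\mathcal K,\ g\in Q\}$ is compact). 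Finally, to produce $\Prob(G)$-valued maps defined on all of $\Prob(X)$, I fix $\mu_0\in\Prob(G)$ and $\eps_i\downarrow0$ and set $\mu_i(\nu)=(\tilde\mu_i(\nu)+\eps_i\mu_0)/(\norm{\tilde\mu_i(\nu)}+\eps_i)$; the denominator is $\int_X c_i\dif\nu+\eps_i$ with $c_i\in C_c(X)$, hence weakly* continuous and bounded below by $\eps_i$, so $\mu_i\colon\Prob(X)\to\Prob(G)$ is weakly* continuous, and since $G$ acts isometrically on $M(G)$ a routine renormalization estimate shows $(\mu_i)_i$ witnesses amenability of $G\acts\Prob(X)$. (When $X$ is compact everything is simpler: $\Prob(X)$ is compact, one may take $c_i\equiv1$ by \cref{amenable action.equivalent.compact}, and neither the tightness argument nor the renormalization is needed --- this is the case used in the rest of the paper.)
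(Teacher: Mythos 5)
Your argument is correct, and at its core it is the same barycentric construction as the paper's: the map $\nu\mapsto\int_X\eta_i(x)\,\dif\nu(x)$. The difference lies in the technical execution. The paper integrates the $\Prob(G)$-valued maps $\eta_i$ from the definition directly and bounds the defect by $\sup_{x\in X}\norm{\eta_i(gx)-g\cdot\eta_i(x)}$, which tacitly requires the convergence to be uniform over \emph{all} of $X$ (available via \cref{amenable action.uniform on space} when $X$ is $\sigma$-compact) and leaves the weak* continuity of the barycenter map unexamined; that continuity is not automatic, since integrating a bounded continuous function of $x$ against $\nu$ is not weak* continuous on $M(X)$ when $X$ is non-compact. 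You instead start from the $C_c(X\times G)^+$ functions of \cref{amenable action.equivalent}, which makes weak* continuity of $\nu\mapsto\tilde\mu_i(\nu)$ immediate because the relevant test functions land in $C_c(X)$, and you replace global uniformity in $x$ by an explicit tightness argument for weak* compact subsets of $\Prob(X)$ (your proof of uniform tightness via a limit point in $\mathcal K\subseteq\Prob(X)$ is sound). The price is that your barycenters are only sub-probability measures and must be renormalized, which your $\eps_i\mu_0$ perturbation handles cleanly. On balance your route is longer but closes the two small gaps the paper's two-line argument glosses over, and it does not need $X$ to be $\sigma$-compact.
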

    \begin{proof}
        Since the map $X\to \Prob(X): x\mapsto \delta_x$ is weakly* continuous and $G$-equivariant, we have that 
        amenability of $G\acts \Prob(X)$ implies amenability of $G\acts X$. 
        
        Conversely, suppose that $G\acts X$ is amenable. Let $\eta_i: X\to \Prob(G)$ be a net of maps as in the 
        definition. Then,
        \[\tilde \eta_i: \Prob(X)\to \Prob(G): \mu\mapsto \int_X \eta_i(x)\dif \mu(x)\]
        is weakly* continuous and satisfies
        \[\norm{\tilde \eta_i(g\cdot \mu) - g\cdot \tilde \eta_i(\mu)}
          \le \int_X \norm{\eta_i(gx) - g\cdot \eta_i(x)} \dif \mu(x) 
          \le \sup_{x\in X} \norm{\eta_i(gx) - g\cdot \eta_i(x)}\to 0\]
        uniformly for $\mu\in \Prob(X)$ and uniformly on compact sets for $g\in G$.
    \end{proof}
    
    \subsection{Exactness}
    The following definition of exactness was given by \citeauthor{KW1999exact} in \cite{KW1999exact}. Recall that
    a $G$-\C*-algebra is a \C*-algebra $A$ together with a $\norm{.}$-continuous action $G\acts A$ by $^*$-isomorphisms.
    We denote by $A\rtimes_r G$ its reduced crossed product.
    \begin{definition}
        A lcsc group $G$ is called \imph{exact} if for every $G$-equivariant exact sequence of $G$-C*-algebras
        \[0\to A\to B\to C\to 0,\]
        also the sequence
        \[0\to A\rtimes_r G\to B\rtimes_r G \to C\rtimes_r G\to 0\]
        is exact.
    \end{definition}
    It is an immediate consequence of this definition that the reduced group \C*-algebra $\Cred(G)$ is exact whenever
    $G$ is exact. The converse is also true for discrete groups (see \cite[Theorem~5.2]{KW1999exact}), 
    but is still open for locally compact groups. The class of exact groups is very large and contains among others all 
    (weakly) amenable groups \cite{HK1994approximation,BCL2016exactness}, linear groups \cite{GHW2005Novikov} and
    hyperbolic groups \cite{Adams1994boundary}.
    By \cite[Theorem~4.1 and Theorem~5.1]{KW1999permanence} the class of 
    exact groups is closed under taking closed subgroups and extensions. Examples of non-exact groups were given by
    \citeauthor{Gromov2003random} \cite{Gromov2003random,AD2008examples} and \citeauthor{Osajda2014small} 
    \cite{Osajda2014small}.
    
    As before, we denote by $\beta^{lu}G$ the spectrum of the algebra 
    $C_b^{lu}(G)$ of bounded left-uniformly continuous functions on $G$. The action $G\acts G$ by left-translation 
    extends uniquely to a continuous action $G\acts \beta^{lu}G$.
    By \cite[Theorem~7.2]{Anantharaman-Delaroche2000} and 
    \cite[Theorem~A]{BCL2016exactness} we have the following
    equivalent characterizations of exactness.
    \begin{theorem}\label{exact.equivalent}
        Let $G$ be a lcsc group. Then, the following are equivalent.
        \begin{equivalent}
            \item $G$ is exact,
            \item $G$ admits a continuous, amenable action on some compact 
            space,
            \item the action $G\acts \beta^{lu}G$ by left-translation is amenable.
        \end{equivalent}
    \end{theorem}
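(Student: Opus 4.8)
The plan is to establish the implications: $G$ exact implies that $G\acts\beta^{lu}G$ is amenable, which implies that $G$ admits an amenable action on some compact space, which in turn implies that $G$ is exact. The middle implication is immediate, since $\beta^{lu}G$ is compact, and its converse holds for soft reasons which are worth recording: if $G\acts X$ is amenable with $X$ a nonempty compact space, fix $x_0\in X$; the orbit map $g\mapsto g\cdot x_0$ from $G$ to $X$ is continuous and $G$-equivariant, so by the universal property of $\beta^{lu}G$ it extends to a continuous $G$-equivariant map $\pi:\beta^{lu}G\to X$, and for any net $\mu_i: X\to\Prob(G)$ witnessing amenability of $G\acts X$ the net $\mu_i\circ\pi$ witnesses amenability of $G\acts\beta^{lu}G$, because
\[\norm{g\cdot\mu_i(\pi(z))-\mu_i(g\cdot\pi(z))}\le\sup_{x\in X}\norm{g\cdot\mu_i(x)-\mu_i(g\cdot x)}\longrightarrow 0\]
uniformly. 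Thus the last two conditions are equivalent without any work, and the statement reduces to the equivalence of exactness with either of them; these are, respectively, \cite[Theorem~7.2]{Anantharaman-Delaroche2000} and \cite[Theorem~A]{BCL2016exactness}, which I would argue as follows.

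For ``$G$ acts amenably on a compact space $\Rightarrow$ $G$ exact'', given such an action $G\acts X$ and a $G$-equivariant short exact sequence $0\to I\to A\to A/I\to0$ of $G$-\C*-algebras, I would tensor throughout with $C(X)$. The coefficients $I\otimes C(X)\subseteq A\otimes C(X)$ are then $X$-algebras carrying amenable $G$-actions, so their reduced and full crossed products coincide; since the full crossed product functor is exact, the resulting sequence of reduced crossed products is exact, and comparing it with the reduced crossed product sequence of the original algebras --- via the $G$-equivariant unital inclusion $A=A\otimes1\hookrightarrow A\otimes C(X)$, which is available because $X$ is compact --- shows that $G$ is exact. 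This is the route of \cite[Theorem~7.2]{Anantharaman-Delaroche2000}.

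For ``$G$ exact $\Rightarrow$ $G\acts\beta^{lu}G$ amenable'' --- the locally compact analogue of the Guentner--Kaminker and Ozawa theorem identifying exactness of $\Cred(\Gamma)$ with property~A --- I would apply the reduced crossed product to the canonical $G$-equivariant short exact sequence
\[0\longrightarrow C_0(G)\longrightarrow C_b^{lu}(G)\longrightarrow C_b^{lu}(G)/C_0(G)\longrightarrow0\]
for the left translation action (all three are genuine $G$-\C*-algebras, norm-continuity of the action being built into the definition of $C_b^{lu}(G)$). Exactness of $G$ keeps this sequence exact, while its left-hand term is $C_0(G)\rtimes_r G\cong\mathcal K(L^2(G))$. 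Using this ideal together with the natural representation of $C_b^{lu}(G)\rtimes_r G$ on $L^2(G)$ by multiplication operators and the left regular representation, and approximating the operators that appear by convolution operators $\lambda(\phi)$ with $\phi\in C_c(G)^+$, one manufactures --- after the usual normalisation and a Day-type convexity step upgrading weak approximate invariance to norm approximate invariance --- a net $(f_i)$ in $C_c(\beta^{lu}G\times G)^+$ with $\int_G f_i(z,s)\dif s\to1$ uniformly in $z$ and $\int_G|f_i(z,g^{-1}s)-f_i(g\cdot z,s)|\dif s\to0$ uniformly on compact sets in $g$; by \cref{amenable action.equivalent} this is exactly amenability of $G\acts\beta^{lu}G$.

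The step I expect to be the genuine obstacle is this last one: extracting a \emph{dynamical} object --- the net of $\Prob(G)$-valued maps --- from the purely operator-algebraic hypothesis that reduced crossed products preserve exact sequences. Beyond the bookkeeping already present in the discrete case, the locally compact setting forces one to control the interplay between the left- and right-uniform structures on $G$ and the fact that $\beta^{lu}G$ is typically far from $\sigma$-compact; it is precisely because $\beta^{lu}G$ is the \emph{universal} left-equivariant compactification that the construction can be carried out on it. Everything else --- the trivial implication, the universal-property observation, and the permanence argument of \cite{Anantharaman-Delaroche2000} --- is routine by comparison.
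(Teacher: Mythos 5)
The paper does not prove this theorem at all: it states it as a combination of \cite[Theorem~7.2]{Anantharaman-Delaroche2000} and \cite[Theorem~A]{BCL2016exactness}, and your proposal correctly identifies those two results as the entire mathematical content, so at the level of what is actually being claimed you and the paper are in agreement. Your added glue is sound: the implication from (iii) to (ii) is trivial, and your converse via the universal property of $\beta^{lu}G$ --- extend the orbit map $g\mapsto g\cdot x_0$ to a continuous $G$-equivariant $\pi:\beta^{lu}G\to X$ and pull back the maps $\mu_i$, noting that uniformity on compact subsets of $X$ is uniformity on all of $X$ --- is exactly right and is the standard soft argument. The sketch of (ii)$\Rightarrow$(i) by tensoring a $G$-equivariant exact sequence with $C(X)$ and using that amenable actions have coinciding full and reduced crossed products is a faithful outline of the Anantharaman--Delaroche argument. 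The one place where your write-up would not survive as a self-contained proof is the implication you yourself flag: ``one manufactures \dots\ a net $(f_i)$ in $C_c(\beta^{lu}G\times G)^+$'' is precisely where all of the work of \cite{BCL2016exactness} lives, and the passage from exactness of $-\rtimes_r G$ on the sequence $0\to C_0(G)\to C_b^{lu}(G)\to C_b^{lu}(G)/C_0(G)\to 0$ to approximately invariant probability-valued maps is genuinely nontrivial (in the locally compact setting Brodzki--Cave--Li in fact route the argument through property A of the coarse structure on $G$ rather than directly through the crossed-product exact sequence). Since the paper itself defers to that reference rather than reproving it, this is not a defect of your proposal relative to the paper, but you should be aware that the sketch as written is an outline of a known hard theorem, not a proof of it.
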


    \section{Class \texorpdfstring{$\mathcal S$}{𝓢} and boundary actions small at infinity}
    The main goal of this section is to prove \cref{bi-exact.equivalent,bi-exact.map to Prob(beta^luG)}, 
    but we first need the following equivalent 
    characterizations  of the
    second condition in the definition of class $\mathcal S$. Note that point \ref{prop(S).equivalent.S(G)} in
    the proposition 
    below is 
    property (S) in the sense of \cite{Brothier2017rigidity}.
    
    As before, we denote by $\mathcal S(G)$ the space 
    $\set{f\in L^1(G)^+}{\norm{f}_1 = 1}$ of probability measures on $G$ that are absolutely continuous with respect to 
    the Haar measure.  There 
    is an obvious $G$-equivariant norm-preserving embedding $\mathcal S(G)\hookrightarrow \Prob(G)$.
    \begin{proposition}\label{prop(S).equivalent}
        Let $G$ be a lcsc group. Then, the following are equivalent.
        \begin{equivalent}
            \item There is a $\norm{.}_1$-continuous map $\eta: G\to \mathcal S(G)$ satisfying
            \[\lim_{k\to \infty}\norm{\eta(gkh) - g\cdot \eta(k)}_1= 0\]
            uniformly on compact sets for $g,h\in G$.\label{prop(S).equivalent.S(G)}
            \item There exists a $\norm{.}$-continuous map $\eta: G\to \Prob(G)$ satisfying,
            \[\lim_{k\to \infty}\norm{\eta(gkh) - g\cdot \eta(k)} = 0\]
            uniformly on compact sets for $g,h\in G$.\label{prop(S).equivalent.Prob(G)}
            \item There exists a Borel map $\eta: G\to \Prob(G)$ satisfying 
            \[\lim_{k\to \infty}\norm{\eta(gkh) - g\cdot \eta(k)} = 0\]
            uniformly on compact sets for $g,h\in G$.\label{prop(S).equivalent.Prob(G).borel}
            \item There exists a sequence of Borel maps $\eta_n: G\to M(G)^+$ satisfying
            \[\liminf_{n\to \infty} \liminf_{k\to \infty} \norm{\eta_n(k)}  > 0\]
            and
            \[\lim_{n\to \infty} \limsup_{k\to \infty} \sup_{g,h\in K}\norm{\eta_n(gkh) - g\cdot\eta_n(k)} = 0\]
            for all compact sets $K\subseteq G$.\label{prop(S).equivalent.sequence}
        \end{equivalent}
    \end{proposition}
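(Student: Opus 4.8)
The plan is to establish the cycle \ref{prop(S).equivalent.S(G)}$\Rightarrow$\ref{prop(S).equivalent.Prob(G)}$\Rightarrow$\ref{prop(S).equivalent.Prob(G).borel}$\Rightarrow$\ref{prop(S).equivalent.sequence}$\Rightarrow$\ref{prop(S).equivalent.S(G)}. The first three implications are immediate: \ref{prop(S).equivalent.S(G)}$\Rightarrow$\ref{prop(S).equivalent.Prob(G)} follows by post-composing with the norm-preserving, $G$-equivariant embedding $\mathcal S(G)\hookrightarrow\Prob(G)$; \ref{prop(S).equivalent.Prob(G)}$\Rightarrow$\ref{prop(S).equivalent.Prob(G).borel} holds because norm-continuous maps are Borel; and \ref{prop(S).equivalent.Prob(G).borel}$\Rightarrow$\ref{prop(S).equivalent.sequence} follows by taking the constant sequence $\eta_n=\eta$, for which $\norm{\eta_n(k)}=1$ while the double-limit condition is exactly the uniform-on-compacts convergence of $\norm{\eta(gkh)-g\cdot\eta(k)}$ to $0$. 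All the content is in \ref{prop(S).equivalent.sequence}$\Rightarrow$\ref{prop(S).equivalent.S(G)}, which I would carry out in four steps.

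\emph{Step 1 (normalisation).} Put $\delta=\liminf_n\liminf_k\norm{\eta_n(k)}>0$. For all large $n$ we have $\liminf_k\norm{\eta_n(k)}>\delta/2$, so after discarding finitely many $n$ and relabelling there are compact sets $F_n\subseteq G$ with $\norm{\eta_n(k)}\ge\delta/2$ for $k\notin F_n$. Define $\zeta_n\colon G\to\Prob(G)$ by $\zeta_n(k)=\eta_n(k)/\norm{\eta_n(k)}$ if $\norm{\eta_n(k)}\ge\delta/2$ and $\zeta_n(k)=\mu_0$ otherwise, for a fixed $\mu_0\in\Prob(G)$; this is Borel. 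Since $\norm{g\cdot\eta_n(k)}=\norm{\eta_n(k)}$ and since $gkh$ leaves every compact set as $k$ does, uniformly for $g,h$ in a fixed compact set, the elementary bound $\norm{a/\norm a-b/\norm b}\le 2\norm{a-b}/\max(\norm a,\norm b)$ gives $\norm{\zeta_n(gkh)-g\cdot\zeta_n(k)}\le(4/\delta)\norm{\eta_n(gkh)-g\cdot\eta_n(k)}$ for $k$ outside a compact set depending on $n$ and the compact set of $(g,h)$. Hence $\lim_n\limsup_k\sup_{g,h\in K}\norm{\zeta_n(gkh)-g\cdot\zeta_n(k)}=0$ for every compact $K\subseteq G$.

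\emph{Step 2 (gluing --- the main point).} I would merge the $\zeta_n$ into a single Borel map $\eta\colon G\to\Prob(G)$ with $\norm{\eta(gkh)-g\cdot\eta(k)}\to0$ uniformly on compact sets. Fix an exhaustion $Q_1\subseteq Q_2\subseteq\cdots$ of $G$ by symmetric compact sets; choose $n_1<n_2<\cdots$ with $\limsup_k\sup_{g,h\in Q_\ell}\norm{\zeta_{n_\ell}(gkh)-g\cdot\zeta_{n_\ell}(k)}<2^{-\ell}$, then a compact $E_\ell\supseteq Q_\ell$ outside of which this supremum is $<2^{-\ell}$, and then a fast enough subsequence $r_1<r_2<\cdots$ with $Q_{r_\ell}\supseteq E_\ell\cup Q_{r_{\ell-1}}Q_{r_{\ell-1}}$. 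For the proper Borel function $|k|:=\min\set{\ell}{k\in Q_{r_\ell}}$ one then has that $|gkh|$ and $|k|$ differ by at most $2$ once $|k|$ is large, uniformly for $g,h$ in a fixed compact set. Choosing $\theta\colon\mathbb R\to[0,1]$ with $\operatorname{supp}\theta\subseteq(-1,1)$ and $\sum_{j\in\mathbb Z}\theta(t-j)=1$ for all $t$, put $c_\ell(k):=\theta(\log_2|k|-\ell)$ --- a Borel partition of unity whose $\ell$-th term is supported where $|k|\in(2^{\ell-1},2^{\ell+1})$ --- and set $\eta(k):=\sum_\ell c_\ell(k)\,\zeta_{n_\ell}(k)$, with a fixed measure added to make the total mass $1$ on the compact set where the $c_\ell$ fall short. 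For $g,h$ in a compact set and $k$ large, the bound
\[\norm{\eta(gkh)-g\cdot\eta(k)}\le\sum_\ell|c_\ell(gkh)-c_\ell(k)|+\sum_\ell c_\ell(k)\,\norm{\zeta_{n_\ell}(gkh)-g\cdot\zeta_{n_\ell}(k)}\]
has a first term that is $O(1/|k|)$, because the annulus thickness at scale $|k|$ is comparable to $|k|$ while $|k|$ changes by at most $2$, and a second term bounded by $2^{-\ell}$ on the only relevant indices $\ell$ (those with $\ell\ge\log_2|k|-1$, which for $k$ large satisfy $g,h\in Q_\ell$ and $k\notin E_\ell$); both tend to $0$. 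This is essentially the mechanism of the proof of \cref{amenable action.uniform on space}, the new feature being that the annulus thickness must grow with the radius so that the first term genuinely vanishes instead of being merely bounded by a fixed constant.

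\emph{Steps 3 and 4 (absolute continuity and continuity).} Fix $\phi,\psi\in C_c(G)^+$ with $\int_G\phi=\int_G\psi=1$ and set
\[\eta'(k):=\int_G\psi(t)\,\bigl(\eta(kt)\ast\phi\bigr)\dif t,\qquad\text{where }(\mu\ast\phi)(x)=\int_G\phi(y^{-1}x)\dif\mu(y).\]
Right convolution by $\phi$ maps $\Prob(G)$ into $\mathcal S(G)$ (positivity, and $\int_G(\mu\ast\phi)=1$), satisfies $(g\cdot\mu)\ast\phi=g\cdot(\mu\ast\phi)$, and is a total-variation contraction since $\norm{\mu\ast\phi}_1\le\norm\mu$; so $k\mapsto\eta(k)\ast\phi$ is a Borel map $G\to\mathcal S(G)$ with $\norm{(\eta(gkh)-g\cdot\eta(k))\ast\phi}_1\le\norm{\eta(gkh)-g\cdot\eta(k)}$. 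Averaging over right translates by $\psi$ produces continuity: the substitution $s=kt$ gives $\eta'(k)=\int_G\psi(k^{-1}s)\,(\eta(s)\ast\phi)\dif s$, whence $\norm{\eta'(k)-\eta'(k')}_1\le\norm{\lambda_k\psi-\lambda_{k'}\psi}_1\to0$ as $k'\to k$ by strong continuity of left translation on $L^1(G)$; and $\eta'(k)$ again lies in $\mathcal S(G)$, being an average of elements of $\mathcal S(G)$. Finally, writing $gkht=g\,(kt)\,(t^{-1}ht)$ and applying the limit condition of $k\mapsto\eta(k)\ast\phi$ with parameters $g$ and $t^{-1}ht$ --- which stay in a compact set as $t$ ranges over $\operatorname{supp}\psi$ --- gives $\norm{\eta'(gkh)-g\cdot\eta'(k)}_1\le\sup_{t\in\operatorname{supp}\psi}\norm{(\eta(gkht)-g\cdot\eta(kt))\ast\phi}_1\to0$ uniformly on compact sets, so $\eta'$ witnesses \ref{prop(S).equivalent.S(G)}. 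The Borel-measurability claims above reduce to weak$^*$-to-weak$^*$ continuity of $\mu\mapsto\mu\ast\phi$ on $\Prob(G)$ (one has $\int_G f\dif(\mu\ast\phi)=\int_G F_f\dif\mu$ with $F_f\in C_c(G)$ for $f\in C_c(G)$) and to continuous dependence of the defining Bochner integral on $k$. The one genuinely delicate point is the gluing in Step 2; everything else is routine bookkeeping with convolutions and the inequality $\norm{\mu\ast\phi}_1\le\norm\mu$.
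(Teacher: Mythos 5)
Your proof is correct and follows essentially the same route as the paper's: the forward implications are trivial, and all the content is a normalize-and-glue construction (the paper's \cref{prop(S).equivalent.net.technical}, where summing roughly $|x|/2$ consecutive terms and renormalizing plays exactly the role of your dyadic partition of unity --- in both cases the number of active indices at radius $n$ grows like $n$ so the $O(1)$ boundary error becomes $O(1/n)$), followed by right-averaging against a bump function for continuity (\cref{prop(S).equivalent.continuous.technical}) and convolution with a fixed element of $C_c(G)^+$ to land in $\mathcal S(G)$. The only organizational difference is that you prove \ref{prop(S).equivalent.sequence}$\Rightarrow$\ref{prop(S).equivalent.S(G)} in one pass, whereas the paper factors it through \ref{prop(S).equivalent.Prob(G).borel} and \ref{prop(S).equivalent.Prob(G)} via two reusable lemmas.
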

    \begin{proof}
        The implications 
        \ref{prop(S).equivalent.S(G)}$\Rightarrow$\ref{prop(S).equivalent.Prob(G)}$\Rightarrow$\ref{prop(S).equivalent.Prob(G).borel}$\Rightarrow$\ref{prop(S).equivalent.sequence} are trivial. We prove the reverse implications
        \ref{prop(S).equivalent.sequence}$\Rightarrow$\ref{prop(S).equivalent.Prob(G).borel}$\Rightarrow$\ref{prop(S).equivalent.Prob(G)}$\Rightarrow$\ref{prop(S).equivalent.S(G)}.
        
        First, we prove \ref{prop(S).equivalent.Prob(G)}$\Rightarrow$\ref{prop(S).equivalent.S(G)}. The proof follows
        the lines of \cite[Proposition~2.2]{Anantharaman-Delaroche2000}. Let $\eta: G\to \Prob(G)$ be as in 
        \ref{prop(S).equivalent.Prob(G)}. We construct $\tilde \eta: G\to \mathcal S(G)$ as follows. Take an 
        $f\in C_c(G)^+$ with $\int_G f(t)\dif t = 1$. Define
        \[\tilde \eta(g)(s) = \int_G f(t^{-1}s)\dif \eta(g)(t)\]
        for $s,g\in G$. One checks that $\tilde \eta(g)\in \mathcal S(G)$ for every $g\in G$ and that $\tilde \eta$ is 
        $\norm{.}_1$-continuous. For all $g,h,k\in G$ we have
        \begin{align*}
            \int_G |\tilde\eta(gkh)(s) - \tilde \eta(k)(g^{-1}s) |\dif s  
                &= \int_G \abs{\int_G f(t^{-1}s) \dif \eta(gkh)(t) - \int_G f(t^{-1}g^{-1}s) \dif \eta(k)(t)}\dif s\\
                &\le \int_G \int_G f(t^{-1}s)\dif |\eta(gkh) - g\cdot \eta(k)|(t)\dif s\\
                &= \norm{\eta(gkh) - g\cdot \eta(k)}
        \end{align*}
        which tends to zero uniformly on compact sets for $g,h\in G$ whenever $k\to \infty$. 
        
        The implication \ref{prop(S).equivalent.Prob(G).borel}$\Rightarrow$\ref{prop(S).equivalent.Prob(G)} follows 
        immediately by applying \cref{prop(S).equivalent.continuous.technical} below with $H = Y = G$ and the actions
        $G\times G\acts H$ and $G\acts Y$ defined by $(g,k)\cdot h = ghk^{-1}$ and $(g,h)\cdot y = gy$ for $g,k\in G$,
        $h\in H$ and $y\in Y$.
        
        Finally, \ref{prop(S).equivalent.sequence}$\Rightarrow$\ref{prop(S).equivalent.Prob(G).borel} follows from 
        the technical lemma \ref{prop(S).equivalent.net.technical} below applied on the spaces $X = Y = G$ with the same
        actions as above.
    \end{proof}
    
    The following is a more abstract and slightly more general version of the trick in 
    \cite[Exercise~15.1.1]{Brown2008}. It will be used several
    times in this article.
    \begin{lemma}\label{prop(S).equivalent.net.technical}
        Let $X$ and $Y$ be $\sigma$-compact spaces and $G$ a lcsc group. Suppose that $G\acts X$ and $G\acts Y$ are 
        continuous 
        actions. If there exists a sequence of Borel maps $\eta_n: X\to M(Y)^+$ satisfying
        \[\lim_{n\to \infty} \limsup_{x\to \infty} \sup_{g\in K}\norm{\eta_n(gx) - g\cdot \eta_n(x)} = 0\]
        for all compact sets $K\subseteq G$ and
        \[\liminf_{n\to \infty} \liminf_{x\to \infty} \norm{\eta_n(x)} > 0.\]
        Then, there exists a Borel map $\eta: X\to \Prob(Y)$ such that
        \begin{align}
        \lim_{x\to \infty} \norm{\eta(gx) - g\cdot \eta(x)} = 0
        \label{eq:prop(S).equivalent.net.technical.result}
        \end{align}
        uniformly on compact sets for $g\in G$. Moreover, if the maps $\eta_n$ are assumed to be $\norm{.}$-continuous 
        (resp. weakly* continuous), then also $\eta$ can be assumed to be $\norm{.}$-continuous 
        (resp. weakly* continuous).
    \end{lemma}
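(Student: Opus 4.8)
The plan is to build $\eta$ in two stages. \emph{Stage one:} from $(\eta_n)_n$ I extract, for each compact $K\subseteq G$ and each $\eps>0$, a single map $\nu_{K,\eps}:X\to\Prob(Y)$ and a compact $L_{K,\eps}\subseteq X$ with $\sup_{g\in K}\norm{\nu_{K,\eps}(gx)-g\cdot\nu_{K,\eps}(x)}<\eps$ for all $x\notin L_{K,\eps}$, preserving the regularity of $\eta_n$. \emph{Stage two:} I amalgamate countably many of these, indexed by exhaustions of $G$ and $X$, into one map $\eta$ with $\norm{\eta(gx)-g\cdot\eta(x)}\to0$ as $x\to\infty$, uniformly on compact sets for $g$. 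Stage two is essentially the sliding-window trick of \cite[Exercise~15.1.1]{Brown2008} (and of the proof of \cref{amenable action.uniform on space}); stage one is the normalisation from $M(Y)^+$ to $\Prob(Y)$, which is where the locally compact setting introduces something new.

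For stage one, put $c=\liminf_n\liminf_{x\to\infty}\norm{\eta_n(x)}>0$, fix $K$ and $\eps$, and choose $n$ so large that $\limsup_{x\to\infty}\sup_{g\in K}\norm{\eta_n(gx)-g\cdot\eta_n(x)}\ll\eps c$ and $\liminf_{x\to\infty}\norm{\eta_n(x)}>c/2$; pick a compact $M\subseteq X$, enlarged so that $gx\notin M$ whenever $x\notin M$ and $g\in K$, outside which $\norm{\eta_n(x)}>c/2$ and the defect of $\eta_n$ over $K$ is $\ll\eps c$. Setting $\nu_{K,\eps}(x)=\eta_n(x)/\norm{\eta_n(x)}$ for $x\notin M$ and $\nu_{K,\eps}(x)=\mu_0$ on $M$ for a fixed $\mu_0\in\Prob(Y)$, the bound $\norm{a/\norm{a}-b/\norm{b}}\le 2\norm{a-b}/\norm{a}$ applied with $a=g\cdot\eta_n(x)$, $b=\eta_n(gx)$ gives the estimate off $M$. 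If the $\eta_n$ are $\norm{.}$-continuous then $x\mapsto\norm{\eta_n(x)}$ is continuous, and one obtains a $\norm{.}$-continuous $\nu_{K,\eps}$ by replacing the hard cut-off with $\nu_{K,\eps}(x)=\bigl(\chi(x)\mu_0+(1-\chi(x))\eta_n(x)\bigr)\big/\bigl(\chi(x)+(1-\chi(x))\norm{\eta_n(x)}\bigr)$ for a continuous compactly supported $\chi:X\to[0,1]$ equal to $1$ on $M$, the denominator being a convex combination of $1$ and $\norm{\eta_n(x)}$ and hence continuous and bounded below. The weak*-continuous case is the one I expect to be hard: $x\mapsto\norm{\eta_n(x)}$ is then only weak*-lower semicontinuous, so the naive normalisation is not weak*-continuous. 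I would handle it by first passing from $\eta_n$ to $\varphi\cdot\eta_n$ for a suitable $\varphi\in C_c(Y)$ with $0\le\varphi\le1$ — which makes $x\mapsto\norm{\varphi\cdot\eta_n(x)}=\int\varphi\,\dif\eta_n(x)$ weak*-continuous — choosing $\varphi$ so that this mass stays bounded below off a compact set (the mass of $\eta_n(x)$ must not fully escape $\operatorname{supp}\varphi$) and so that the cut-off error $\norm{(\varphi-g\cdot\varphi)\cdot g\cdot\eta_n(x)}$ stays small for $g\in K$ (which needs $\varphi$ to be almost $K$-invariant on the relevant part of $Y$); afterwards the convex-combination recipe applies verbatim.

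For stage two, fix exhaustions $K_1\subseteq K_2\subseteq\cdots$ of $G$ and $L_1\subseteq L_2\subseteq\cdots$ of $X$ by compact sets, apply stage one with $(K_m,1/m)$ to get $\nu_m:X\to\Prob(Y)$ with associated compact sets contained — after inductively enlarging the $L_j$ — in $L_j$, and arrange $K_jL_{j-1}\cup K_j^{-1}L_{j-1}\subseteq L_j$ and $L_{j-1}\subseteq\inter L_j$. With $|x|=\max\set{k}{x\notin L_k}$, choose a window width $n_j\to\infty$ with $n_j=o(j)$ and functions $f_k:X\to[0,1]$ forming a partition-of-unity-type family on a backward annulus of width $\approx n_k$ around index $k$, and set $\tilde\eta(x)=\sum_k f_k(x)\nu_k(x)$, $\eta(x)=\tilde\eta(x)/\norm{\tilde\eta(x)}$. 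Since the $\nu_k$ are $\Prob(Y)$-valued, $\norm{\tilde\eta(x)}=\sum_k f_k(x)$ is a scalar, which I arrange to be $\ge1$ everywhere and $\approx n_{|x|}$ for large $|x|$, so $\eta$ inherits the regularity of the $\nu_k$. Given a compact $K\subseteq K_m$ and $x$ with $|x|=j$ large, the enlargement of the $L_j$ forces $|gx|\in\{j-1,j,j+1\}$ for $g\in K$, so the windows at $x$ and $gx$ agree up to $O(1)$ indices; hence $\norm{g\cdot\tilde\eta(x)-\tilde\eta(gx)}$ is at most $O(1)$ boundary terms of mass $\le1$ plus $\sum_k f_k(x)\norm{g\cdot\nu_k(x)-\nu_k(gx)}$ over the $\approx n_j$ common indices $k\approx j$, each of the latter being $<1/k\le1/(j-n_j)$; dividing by $\norm{\tilde\eta(x)}\approx n_j$ yields $\norm{\eta(gx)-g\cdot\eta(x)}=O\bigl(1/n_j+n_j/(j-n_j)\bigr)\to0$ as $x\to\infty$, uniformly over $g\in K$.

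The main obstacle is the weak*-continuous case of stage one: the naive normalisation destroys weak*-continuity because $\mu\mapsto\norm{\mu}$ is only weak*-lower semicontinuous on $M(Y)$, and the cut-off fix must be carried out carefully enough that neither the mass lower bound nor the almost-$K$-equivariance is lost. The rest is bookkeeping; the one point beyond the discrete case of \cite{Brown2008} in stage two is letting the window width $n_j=o(j)$ tend to infinity, which is exactly what upgrades a fixed-window amalgamation (good only up to a constant defect) to one whose defect genuinely vanishes along $x\to\infty$.
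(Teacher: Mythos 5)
Your construction is, in substance, the paper's own proof: the paper also first normalises, setting $\tilde\eta_n(x)=\eta_n(x)/\norm{\eta_n(x)}$ after arranging a uniform lower bound $\norm{\eta_n(x)}\ge\delta$ (by passing to a subsequence and modifying on a compact set), and then amalgamates by the same sliding-window sum $\mu(x)=\sum_{k=h(|x|)}^{|x|}\tilde\eta_k(x)$ with $|x|$ defined from an exhaustion $(L_n)_n$ satisfying $gL_n\subseteq L_{n+1}$ for $g\in K_n$, followed by the division-by-total-mass step and the same $2\norm{a-b}/\norm{a}$ estimate; the continuous refinement via bump functions and a convex combination with a fixed $\delta_{y_0}$ is also identical. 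The only real difference is bookkeeping: the paper passes to a subsequence so that the $k$-th map has defect $2^{-k+1}$ (summable), which lets it use the wide window $[\lfloor|x|/2\rfloor+1,|x|]$; you keep defect $1/k$ and compensate with a window of width $n_j=o(j)$. Both give numerator $O(1)$ (or $o(n_j)$) over denominator $\approx$ window width, and your remark that $n_j=o(j)$ is \emph{needed} is not quite right --- only $n_j\to\infty$ and $j-n_j\to\infty$ matter --- but this is harmless.

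The one step that would fail is your treatment of the weak*-continuous clause. Replacing $\eta_n$ by $\varphi\cdot\eta_n$ for a single $\varphi\in C_c(Y)$ requires $\inf_x\int\varphi\,\dif\eta_n(x)>0$ outside a compact set, and this cannot be arranged in general: for $X=Y=G=\mathbb R$ acting by translation and $\eta_n(x)=\delta_x$ (a perfectly admissible, weak*-continuous input), $\int\varphi\,\dif\eta_n(x)=\varphi(x)\to 0$ as $x\to\infty$ for every $\varphi\in C_c(Y)$, so no cutoff keeps the mass bounded below. (This example does not refute the weak* clause itself --- there the naive normalisation is already weak*-continuous --- it only shows your repair does not go through.) You are right that weak*-lower semicontinuity of the mass is the obstruction; you should be aware, though, that the paper's written proof also only carries out the Borel and $\norm{.}$-continuous cases, and the weak* clause is never invoked in any application in the paper, so for the purposes of everything downstream your argument suffices.
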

    \begin{proof}
        After passing to a subsequence and replacing values of $\eta_n$ in a compact set, we can assume that there 
        exists a 
        $\delta > 0$ such that $\norm{\eta_n(x)}_1 \ge \delta$ for all $n\in \mathbb N$ and all $x\in X$. Set 
        $\tilde \eta_n(x) = \eta_n(x) / \norm{\eta_n(x)}$ for all $x\in X$. Note that
        \[\lim_{n\to \infty} \limsup_{x\to \infty} \sup_{g\in K}\norm{\tilde \eta_n(gx) - g\cdot \tilde \eta_n(x)} 
        \le \lim_{n\to \infty} \limsup_{x\to \infty} \sup_{g\in K}\frac{2}{\norm{\eta_n(x)}} \norm{\eta_n(gx) - g\cdot \eta_n(x)} = 0\]
        for all compact sets $K\subseteq G$.
        
        Take an increasing sequence $(K_n)_n$ of compact symmetric neighborhoods of the unit $e$ in $G$
        such that $G = \bigcup_n \inter(K_n)$.
        After passing to a subsequence of $(\tilde \eta_n)_n$, we find compact sets $L_n\subseteq X$ 
        such that
        \[\norm{\tilde\eta_n(gx) - g\cdot \tilde \eta_n(x)}\le 2^{-n+1}\]
        for all $g\in K_n$  and $x\in X\setminus L_n$. After inductively enlarging $L_n$, we can assume 
        that the sequence $(L_n)_n$ is increasing, that
        $gL_n\subseteq L_{n+1}$ for all $g\in K_n$ and 
        that $X = \bigcup_n L_n$. Moreover, we can also assume $L_0$ to be the empty set.
        
        For every $x\in X$, we denote 
        $|x| = \max\set{n\in \mathbb N}{x\notin L_n}$. Furthermore, we denote $h(n) = \floor{n/2} + 1$ for $n\ge 1$. 
        Fix a $y_0\in Y$. We set $\mu(x) = \delta_{y_0}$ whenever $|x|\le 1$ and
        \[\mu(x) = \sum_{k=h(|x|)}^{|x|} \tilde \eta_k(x)\]
        whenever $|x| \ge 2$.
        Now, define $\eta: X\to \Prob(Y)$ by $\eta(x) = \mu(x) / \norm{\mu(x)}$.
        
        To prove that $\eta$ satisfies \eqref{eq:prop(S).equivalent.net.technical.result}, take $\eps > 0$ and 
        $K\subseteq G$ arbitrary.
        Since $\bigcup_n \inter(K_n) = G$, we can take an $n_0 \ge 1$
        such that $K\subseteq K_{n_0}$. Take $n_1 > \max\{2n_0,16 / \eps\}$. 
        We claim that 
        $\norm{\eta(g\cdot x) - g\cdot \eta(x)} < \eps$ whenever $x\in G\setminus L_{n_1}$ and $g\in K$. Indeed, fix 
        $g\in K$ and $x\in G\setminus L_{n_1}$. Take $n \ge n_1$ such that $x\in L_{n+1}\setminus L_{n}$. Then, 
        $|x| = n$. Since 
        $gL_{n+1}\subseteq L_{n+2}$ and $g^{-1}L_{n-1}\subseteq L_{n}$, we have that 
        $gx\in L_{n+2}\setminus L_{n-1}$ and 
        hence $n-1\le |gx| \le n+1$. This yields
        \[\norm{\mu(gx) - g\cdot \mu(x)}
        \le 2 + \sum_{k=h(n)}^{n} \norm{\tilde \eta_k(gx) - g\cdot \tilde \eta_k(x)}
        \le 4,\]
        since $g\in K_k$ and $x\in X\setminus L_k$ for $k = h(n),\dots, n$.
        Hence
        \[\norm{\eta(gx) - g\cdot \eta(x)}
        \le \frac{2}{\norm{\mu(x)}} \norm{ \mu(gx) - g\cdot \mu(x) }
        \le \frac{4}{n}\cdot 4 < \eps\]
        which proves the claim.
        
        If the maps $\eta_n$ are $\norm{.}$-continuous, we can make $\mu$ (and hence $\eta$)
        $\norm{.}$-continuous in the following way. By inductively enlarging the compact 
        sets $L_n$ above and using that $X$ is locally compact, we can assume that $L_n\subseteq \interior(L_{n+1})$. 
        For all $n\ge 1$, we 
        take a continuous function
        $f_n: X\to [0,1]$ such that $f_n(x) = 1$ if $x\in L_{2n}\setminus L_{n}$ and $f_n(x) = 0$ if 
        $x\in L_{n-1}$ or $x\in X\setminus L_{2n+1}$.        
        For $x\in X$ with $|x|\ge 2$, we set 
        \[\tilde \mu(x) = \sum_{k=1}^{+\infty} f_k(x) \tilde \eta_k(x) 
                 = f_{h(|x|)-1}(x)\;\tilde \eta_{h(|x|)-1}(x) + 
                   f_{|x|+1}(x)\;\tilde \eta_{|x|+1}(x) + \sum_{k=h(|x|)}^{|x|} \tilde \eta_k(x).\]
        Fix again $y_0\in Y$ and take a continuous map $a: X\to [0,1]$ such that $a(x) = 1$ if $x\in L_2$ and 
        $a(x) = 0$ when $x\in X\setminus L_3$. We define the continuous map 
        $\eta: X\to M(Y)^+$ by
        \[\mu(x) = \begin{cases}
                        a(x)\delta_{y_0} + \big(1-a(x)\big)\frac{\tilde \mu(x)}{\norm{\tilde \mu(x)}}&\text{if $|x|\ge 2$}\\
                        \delta_{y_0}&\text{if $|x|\le 1$}
                   \end{cases}\]
        Obviously, $\eta$ is 
        $\norm{.}$-continuous and, by a similar calculation as above, one proves that $\eta$ 
        satisfies \eqref{eq:prop(S).equivalent.net.technical.result}.
    \end{proof}
    \begin{remark}\label{prop(S).equivalent.technical.more general}
        Using almost exactly the same proof as above, one can actually prove the following slightly more general 
        result: suppose that for every $\eps > 0$, every compact set $K\subseteq G$, there exists a compact set 
        $L\subseteq X$ such that for all compact sets $L'\subseteq X$, there exists a map $\mu: X\to M(Y)^+$ such
        that
        \begin{align}
            \frac{\norm{\mu(gx) - g\cdot \mu(x)}}{\norm{\mu(x)}} < \eps
            \label{eq:prop(S).equivalent.technical.more general}
        \end{align}
        whenever $g\in K$ and $x\in L'\setminus L$. Then, there exists a map $\eta: X \to \Prob(Y)$ as in 
        \eqref{eq:prop(S).equivalent.net.technical.result}. Indeed, using the notation of the proof, we can take the compact 
        sets $L_n\subseteq X$ and the maps
        $\eta_n: X\to M(Y)^+$ such that
        \[\norm{\tilde \eta_n(gx) - g\cdot \tilde \eta_n(x)}\le \frac{2}{\norm{\eta_n(x)}} \norm{\eta_n(gx) - g\cdot \eta_n(x)} < 2^{-n+1}\]
        for all $g\in K_n$ and $x\in L_{2n}\setminus L_n$, where again 
        $\tilde \eta_n(x) = \eta_n(x) / \norm{\eta_n(x)}$. The rest of the proof holds verbatim.
    \end{remark}
    
    \pagebreak[3]
    The following lemma will be used several times to replace Borel maps by continuous maps.
    \begin{lemma}\label{prop(S).equivalent.continuous.technical}
        Let $H$ and $G$ be lcsc groups and $Y$ a locally compact space. Suppose that $G\acts^\alpha H$ is a continuous 
        action by group 
        automorphisms and that $G\acts Y$ is continuous. If there exists a Borel map $\eta: H\to \Prob(Y)$
        satisfying
        \[\lim_{h\to \infty} \norm{\eta\big(\alpha_g(h)\big) - g\cdot \eta(h)} = 0
          \mathand \lim_{h\to \infty} \norm{\eta\big(\alpha_g(h)k\big) - \eta\big(\alpha_g(hk)\big)} = 0\]
        uniformly on compact sets for $g\in G$ and $k\in H$, then there exists a $\norm{.}$-continuous map 
        $\tilde \eta: H\to \Prob(Y)$ map satisfying
        \[\lim_{h\to \infty} \norm{\tilde \eta\big(\alpha_g(h)\big) - g\cdot \tilde \eta(h)} = 0\]
    \end{lemma}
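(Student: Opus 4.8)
The plan is to obtain $\tilde\eta$ from $\eta$ by averaging over a small right translation --- the standard mollification trick, here adapted to the fact that $\eta$ is merely Borel. Fix a function $f\in C_c(H)^+$ with $\int_H f\dif k=1$, where $\dif k$ is a left Haar measure on $H$, and let $K_0\subseteq H$ be a compact set containing the support of $f$. Define $\tilde\eta\colon H\to M(Y)^+$ by $\tilde\eta(h)(\phi)=\int_H f(k)\,\eta(hk)(\phi)\dif k$ for $\phi\in C_0(Y)$, the integral taken in the weak* sense; this is legitimate because $\eta$ is Borel, $k\mapsto\eta(hk)(\phi)$ is bounded by $\norm{\phi}_\infty$, and $f$ has compact support. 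Substituting $k\mapsto h^{-1}k$ and using left invariance of Haar measure, $\tilde\eta(h)=\int_H(\lambda_h f)(k)\,\eta(k)\dif k$; in particular $\tilde\eta(h)$ is a positive Radon measure of total mass $\int_H(\lambda_h f)(k)\dif k=1$, so $\tilde\eta$ takes values in $\Prob(Y)$. It is also $\norm{.}$-continuous: for $h,h'\in H$ and $\phi\in C_0(Y)$ with $\norm{\phi}_\infty\le1$, $\abs{\tilde\eta(h)(\phi)-\tilde\eta(h')(\phi)}\le\norm{\lambda_h f-\lambda_{h'}f}_1$, so $\norm{\tilde\eta(h)-\tilde\eta(h')}\le\norm{\lambda_h f-\lambda_{h'}f}_1$, which tends to $0$ as $h'\to h$ by continuity of translation on $L^1(H)$.

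It remains to establish the asymptotic equivariance. From the definitions, $\tilde\eta(\alpha_g(h))=\int_H f(k)\,\eta(\alpha_g(h)k)\dif k$ and $g\cdot\tilde\eta(h)=\int_H f(k)\,\bigl(g\cdot\eta(hk)\bigr)\dif k$, so
\[
\norm{\tilde\eta(\alpha_g(h))-g\cdot\tilde\eta(h)}\le\int_H f(k)\,\norm{\eta(\alpha_g(h)k)-g\cdot\eta(hk)}\dif k\le\sup_{k\in K_0}\norm{\eta(\alpha_g(h)k)-g\cdot\eta(hk)}.
\]
I would then bound the integrand by the triangle inequality through $\eta(\alpha_g(hk))$,
\[
\norm{\eta(\alpha_g(h)k)-g\cdot\eta(hk)}\le\norm{\eta(\alpha_g(h)k)-\eta(\alpha_g(hk))}+\norm{\eta(\alpha_g(hk))-g\cdot\eta(hk)}.
\]
Right averaging is used precisely because $\alpha_g(h)k$ and $\alpha_g(hk)=\alpha_g(h)\alpha_g(k)$ differ: the first term on the right is controlled by the second hypothesis on $\eta$, with $k$ ranging over the compact set $K_0$, and hence tends to $0$ as $h\to\infty$, uniformly for $k\in K_0$ and for $g$ in any fixed compact set. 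For the second term, note that $hk\to\infty$ as $h\to\infty$ uniformly for $k\in K_0$ (if $hk\in C$ for some $k\in K_0$ then $h\in CK_0^{-1}$), so the first hypothesis on $\eta$ forces it to $0$ as well, again uniformly in $k\in K_0$ and locally uniformly in $g$. Combining, $\norm{\tilde\eta(\alpha_g(h))-g\cdot\tilde\eta(h)}\to0$ as $h\to\infty$, uniformly on compact sets for $g\in G$, as desired.

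The argument is essentially routine. The only points that require a little care are the measure-theoretic bookkeeping --- the weak* interpretation of the vector-valued integrals and the Haar-measure substitution, which produces no modular function because one substitutes by a \emph{left} translation --- and the correct use of the two hypotheses. The crucial observation is that the second hypothesis on $\eta$ is exactly what is needed to pass from $\eta(\alpha_g(h)k)$ to $\eta(\alpha_g(hk))$; this is where the failure of the automorphism $\alpha_g$ to commute with right translation enters, and it is the reason the lemma is phrased with that extra condition.
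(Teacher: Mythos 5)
Your proof is correct and follows essentially the same route as the paper: a right-translation average of $\eta$ against a normalized kernel (the paper uses the indicator of a compact unit neighborhood of Haar measure one where you use a continuous bump function, an immaterial difference), with norm-continuity coming from continuity of translation in $L^1(H)$ and the asymptotic equivariance from the same bound $\norm{\tilde\eta(\alpha_g(h))-g\cdot\tilde\eta(h)}\le\sup_{k\in K_0}\norm{\eta(\alpha_g(h)k)-g\cdot\eta(hk)}$. Your explicit triangle-inequality decomposition through $\eta(\alpha_g(hk))$ just spells out the step the paper leaves implicit in "the right hand side tends to zero."
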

    \begin{proof}
        Fix a compact neighborhood $K$ of the unit $e$ in $H$ with $\lambda_H(K) = 1$. We define 
        $\tilde \eta: H\to \Prob(Y)$ by
        \[\tilde \eta(g) = \int_K \eta(gk)\dif k.\]
        The map $\tilde \eta$ is continuous, since for $h_1,h_2\in H$ we have
        \[\norm{\tilde{\eta}(h_1) - \tilde{\eta}(h_2)} \le \int_{h_1K\triangle h_2K} \norm{\eta(k)}\dif k 
        = \lambda_H(h_1K\triangle h_2K)\]
        and the right hand side tends to zero whenever $h_2\to h_1$. Moreover, for $g\in G$ and $h\in H$, we have
        \[\norm{\tilde \eta \big(\alpha_g(h)\big) - g\cdot \tilde \eta(h)}
            \le \int_K \norm{  \eta\big(\alpha_g(h)k\big) -  g\cdot \eta(hk) }\dif k\]
        Since $K$ is compact the right hand side tends to zero uniformly on compact sets 
        for $g\in G$ whenever $h \to \infty$.
    \end{proof}
    
    We are now ready to prove \cref{bi-exact.equivalent}.
    \begin{proof}[Proof of \cref{bi-exact.equivalent}]
        First, we prove \ref{bi-exact.equivalent.bi-exact}$\Rightarrow$\ref{bi-exact.equivalent.boundary}. Let 
        $\eta: G\to \Prob(G)$ be a map as in the definition of class $\mathcal S$. Consider
        the u.c.p. map $\eta_*: C_b^{lu}(G)\to C_b^u(G)$ defined by
        \[(\eta_*f)(g) = \int_G f(s)\dif \eta(g)(s)\]
        for $f\in C_b^{lu}(G)$ and $g\in G$. Note that $\eta_*$ is well-defined. Indeed, fix $f\in C_b^{lu}(G)$ and 
        $\eps > 0$. 
        For $g,h\in G$ we have
        \[|(\eta_*f)(h^{-1}g) - (\eta_*f)(g)|\le \min\left\{ \norm{f}_\infty\norm{\eta(h^{-1}g) - \eta(g)},\  \norm{f}_\infty \norm{\eta(h^{-1}g) - h^{-1}\cdot \eta(g)} + \norm{f - \lambda_hf}_\infty \right\}\]
        and
        \begin{align}
            |(\eta_*f)(gh) - (\eta_*f)(g)| \le \norm{f}_\infty\norm{\eta(gh) - \eta(g)}.\label{eq:bi-exact.equivalent.right uniformly continuous}
        \end{align}
        Pick a compact neighborhood $K$ of the unit $e$ in $G$. We find a compact subset $L\subseteq G$ such that
        \[\norm{\eta(gh) - \eta(g)} \le \eps\mathand 
          \norm{\eta(h^{-1}g) - h^{-1}\cdot \eta(g)}\le \frac \eps2\]
        whenever $h\in K$ and $g\in G\setminus L$. Now, we can take an open neighborhood $\mathcal U\subseteq K$ of the 
        unit $e$ in $G$
        such that
        \[\norm{f - \lambda_hf}_\infty \le \frac \eps2,\qquad
          \sup_{g\in L}\norm{\eta(h^{-1}g) - \eta(g)}\le \eps\mathand
          \sup_{g\in L}\norm{\eta(gh) - \eta(h)}\le \eps\]
        for all $h\in \mathcal U$. It follows that
        \[\norm{\lambda_h(\eta_*f)-\eta_*f}_\infty\le \eps\mathand\norm{\rho_h(\eta_*f) - \eta_*f}_\infty < \eps\]
        for all $h\in \mathcal U$.
        
        Moreover, \eqref{eq:bi-exact.equivalent.right uniformly continuous} also implies that
        \[\lim_{g\to \infty} |(\eta_*f)(gh) - (\eta_*f)(g)| = 0\]
        for all $h\in G$ and hence that $\eta_*(f)\in C(h^u G)$ for all $f\in C_b^{lu}(G)$. 
        
        Similarly, one proves that $\eta_*(\lambda_gf) - \lambda_g(\eta_*f)\in C_0(G)$. Let 
        $\pi: C(h^u G) \to C(\nu^u G)\cong C(h^u G) / C_0(G)$ be the quotient map. It follows that
        $\pi\circ \eta_*: C_b^{lu}(G)\to C(\nu^u G)$ is a $G$-equivariant u.c.p. map. Dualizing, this map induces a 
        weakly* 
        continuous $G$-equivariant map
        $\Prob(\nu^u G)\to \Prob(\beta^{lu}G)$ given by $\mu\mapsto \mu\circ \pi\circ \eta_*$. Since $G$ is exact,
        the action $G\acts \beta^{lu}G$ is amenable and hence so is $G\acts \Prob(\beta^{lu}G)$ (see 
        \cref{amenable action.X cs Prob(X)}). It follows that $G\acts \Prob(\nu^uG)$ is amenable and hence 
        so is $G\acts \nu^uG$.
        
        Now, we prove \ref{bi-exact.equivalent.boundary}$\Leftrightarrow$\ref{bi-exact.equivalent.compactification}. 
        The implication from right to left is trivial. To prove the other implication, take an arbitrary compact 
        subset $K\subseteq G$  and an $\eps > 0$. By
        \cref{amenable action.equivalent}, it suffices to 
        construct a function $h\in C_c(h^uG\times G)^+$ such that $\int_G h(x,s)\dif s = 1$ for every 
        $x\in h^uG$ and
        \begin{align}
            \int_G | h(x,g^{-1}s) - h(gx,s)|\dif s < \eps
            \label{eq:bi-exact.equivalent.h^uG.goal}
        \end{align}
        for all $x\in h^uG$ and $g\in K$.
        
        By \cref{amenable action.equivalent,amenable action.equivalent.compact}, we find an 
        $f\in C_c(\nu^uG\times G)^+$ satisfying $\int_G f(x,s)\dif s = 1$ and
        \[\int_G |f(x,g^{-1}s) - f(gx,s)|\dif s  <\frac \eps 2 \]
        for all $x\in \nu^uG$ and $g\in K$. By the Tietze Extension Theorem, we can extend $f$ to a function 
        $\tilde f\in C_c(h^uG\times G)^+$. Since $\tilde f = f$ on $\nu^uG\times G$, we can take a compact set 
        $L\subseteq G$ and renormalize $\tilde f$ such that
        \[\int_G \tilde f(x,s)\dif s = 1\mathand \int_G | \tilde f(x,g^{-1}s) - \tilde f(gx,s)|\dif s < \frac \eps 2 \]
        for all $x\in h^uG\setminus L$ and $g\in K$. 
        
        Now, fix a function $a\in C_c(G)^+$ with $\int_G a(s)\dif s = 1$. Using \cref{slowly changing function} below,
        we take a function $\zeta\in C_c(G)^+$ such that $\zeta|_L = 1$ and $|\zeta(gh) - \zeta(h)| < \eps / 4$ for 
        $h\in G$ and $g\in K$. Now, define $h\in C_c(h^uG\times G)$ by
        \[h(x,s) = \begin{cases}
                        \zeta(x) a(x^{-1}s) + \big(1-\zeta(x)\big) \tilde f(x,s)&\text{if $x\in G$,}\\
                        \tilde f(x,s)&\text{if $x\in \nu^uG$.}
                   \end{cases}\]
        A straightforward calculation shows that $h$ satisfies \eqref{eq:bi-exact.equivalent.h^uG.goal}.
        
        Next, we prove \ref{bi-exact.equivalent.boundary}$\Rightarrow$\ref{bi-exact.equivalent.GxG action}
        Denote by $X$ the spectrum of $A = C_b^u(G) / C_0(G)$. Since $C(h^uG)\subseteq C_b^u(G)$, we have a natural 
        embedding
        $C(\nu^uG)\hookrightarrow A$, which in turn induces a continuous map $\varphi_\ell: X\to \nu^u G$. Note that 
        $\varphi_\ell$ is $G\times G$-equivariant with respect to the actions induced by left and right translation.
        Similarly, we get a $G\times G$-equivariant map $\varphi_r: X\to \nu^{u}_rG$, where $\nu_r^uG$ denotes the 
        spectrum of the algebra
        \[C(\nu_r^uG) = \set{f\in C^u(G)}{\lambda_gf - f\in C_0(G)}\]
        and the action $G\times G\acts \nu_r^uG$ is induced by left and right translation. 
        By assumption, the action $G\times 1\acts \nu^u G$ is amenable, and by symmetry so is $1\times G\acts \nu_r^uG$.
        Since the actions $1\times G\acts \nu^uG$ and $G\times 1 \acts \nu_r^uG$ are trivial, the diagonal action 
        $G\times G\acts \nu^u G\times \nu_r^uG$ is amenable. Now, the conclusion follows from the 
        $G\times G$-equivariance of the map 
        $\varphi_\ell\times \varphi_r: X\to \nu^u G\times \nu_r^uG$.
        
        Finally, we prove \ref{bi-exact.equivalent.GxG action}$\Rightarrow$\ref{bi-exact.equivalent.bi-exact}. By 
        \cref{exact.equivalent}, the group $G$ is exact. Denote 
        again by $X$ the spectrum of $A = C_b^u(G) / C_0(G)$. Denoting by $\beta^uG$ the spectrum of $C_b^u(G)$, we get 
        $X = \beta^uG\setminus G$. By 
        \cref{amenable action.equivalent,amenable action.equivalent.compact}, we can take 
        a sequence $(f_n)_{n}$ of functions in 
        $C_c(X\times G\times G)^+$ 
        such that
        $\int_{G\times G} f_n(x,s,t)\dif s\dif t = 1$ for all $x\in X$ and $n\in \mathbb N$, and such that
        \begin{align}
            \lim_{n\to \infty}\int_{G\times G} |f_n(x,g^{-1}s,h^{-1}t) - f_n\big((g,h)\cdot x,s,t\big)|\dif s\dif t = 0
            \label{eq:bi-exact.equivalent.GxG action.f_i}
        \end{align}
        uniformly for $x\in X$ and uniformly on compact sets for $g,h\in G$. As before, the Tietze Extension Theorem
        yields extensions $\tilde f_n\in C_c(\beta^uG\times G\times G)^+$ of each $f_n$. For each $x\in \beta^uG$ and 
        $n\in \mathbb N$, we define $\eta_n(x)\in M(G)^+$ as the measure with density function 
        \[s\mapsto \int_G \tilde f_n(x,s,t)\dif t.\]
        with respect to the Haar measure.
        This yields $\norm{.}$-continuous maps $\eta_n: \beta^uG\to M(G)^+$. By 
        \eqref{eq:bi-exact.equivalent.GxG action.f_i}, the restrictions of $\eta_n$ to $G\subseteq \beta^uG$ satisfy 
        the conditions of \cref{prop(S).equivalent} \ref{prop(S).equivalent.sequence}.
    \end{proof}
    
    In the proof above, we used the following easy lemma.
    \begin{lemma}\label{slowly changing function}
        Let $G$ be a lcsc group. For all compact subsets $K,L\subseteq G$ and all $\eps > 0$, there exists a continuous 
        function 
        $f\in C_c(G)$ satisfying $f|_L = 1$ and
        \[|f(kgk') - f(g)| < \eps\]
        for $k,k'\in K$ and $g\in G$.
    \end{lemma}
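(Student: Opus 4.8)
The plan is to build $f$ as a normalized sum of Urysohn functions subordinate to a rapidly growing nested family of compact sets that absorbs two-sided translation by $K$ one ``level'' at a time. First I would replace $K$ by $K\cup K^{-1}$, so that we may assume $K$ is symmetric; this only strengthens the statement. Starting from $C_0=L$, I would then use local compactness of $G$ to choose compact sets $C_n$ ($n\ge 1$) inductively so that $C_{n-1}\cup KC_{n-1}K\subseteq\inter(C_n)$: the set $C_{n-1}\cup KC_{n-1}K$ is compact, so it is covered by finitely many relatively compact open sets, and we take $C_n$ to be the closure of their union. The crucial feature of this family is the absorption property: $g\in C_n$ and $k,k'\in K$ force $kgk'\in C_{n+1}$, and, using that $K$ is symmetric, also conversely. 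Introducing the rank $r(g)=\min\{n\in\mathbb N : g\in C_n\}$, with $r(g)=\infty$ when $g\notin\bigcup_n C_n$, absorption yields $|r(kgk')-r(g)|\le 1$ for all $g\in G$ and all $k,k'\in K$ (and if one of the two ranks is finite then so is the other).

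Next, fix an integer $N$ with $3/N<\eps$. For $1\le n\le N$, Urysohn's lemma for locally compact Hausdorff spaces provides a continuous function $g_n\colon G\to[0,1]$ that equals $1$ on $C_{n-1}$ and whose support lies in $C_n$. Put $f=\frac1N\sum_{n=1}^N g_n$. Then $f\in C_c(G)$ (its support is contained in $C_N$), and since every $g_n$ equals $1$ on $C_0=L$ we have $f|_L=1$.

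The heart of the matter is the slow-variation estimate, which I would obtain by comparing $f$ with the piecewise-linear profile $\psi(r)=\max\{0,\,1-r/N\}$ of the rank (with $\psi(\infty):=0$). If $r(g)=r$, then for $n\ge r+1$ we have $g\in C_r\subseteq C_{n-1}$, so $g_n(g)=1$; for $n\le r-1$ we have $C_n\subseteq C_{r-1}$ and $g\notin C_{r-1}$, so $g_n(g)=0$; only the index $n=r$ contributes an undetermined value in $[0,1]$. Counting these contributions shows $|f(g)-\psi(r(g))|\le 1/N$ for every $g\in G$, uniformly over the cases $r(g)=0$, $1\le r(g)\le N$, $r(g)>N$ and $r(g)=\infty$. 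Since $\psi$ changes by at most $1/N$ when its argument changes by $1$, combining this with $|r(kgk')-r(g)|\le 1$ gives
\[
|f(kgk')-f(g)|\le|f(kgk')-\psi(r(kgk'))|+|\psi(r(kgk'))-\psi(r(g))|+|\psi(r(g))-f(g)|\le\frac3N<\eps
\]
for all $g\in G$ and $k,k'\in K$, which is what we want.

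I expect the only delicate point to be the bookkeeping with the rank: one has to apply the absorption property $KC_{n-1}K\subseteq C_n$ with the correct index shift on each side, and to check that the boundary cases ($r=0$, $r>N$, and $r=\infty$, as well as the situation where one of $g,kgk'$ lies outside $\bigcup_n C_n$) are all absorbed into the single inequality $|f-\psi\circ r|\le 1/N$. The topological ingredients, namely the existence of the nested sets $C_n$ and of the Urysohn functions $g_n$, are entirely routine for locally compact Hausdorff groups.
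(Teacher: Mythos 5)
Your proof is correct and follows essentially the same route as the paper: both average $N$ Urysohn-type bump functions over a nested family of compact sets, each of which absorbs two-sided translation by $K$ into the next level, so that conjugating by $K$ shifts the ``rank'' by at most one and changes the average by $O(1/N)$. The only cosmetic difference is that the paper takes the explicit sets $L_n = K^nLK^n$ (after arranging $e\in\inter(K)$) rather than building the $C_n$ by a generic local-compactness argument, and your bookkeeping via the profile $\psi(r)=\max\{0,1-r/N\}$ is a clean way to organize the same estimate.
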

    \begin{proof}
        Without loss of generality we can assume that $K$ is symmetric and that $\inter(K)$ contains the unit $e$. 
        Denote $L_0 = L$ and 
        $L_n = K^nLK^n$ for $n\ge 1$. Then, 
        $L_n\subseteq \inter(L_{n+1})$ for every $n\in \mathbb N$ and hence, we can take a continuous $f_n:G\to [0,1]$ 
        with $f_n(g) = 1$ 
        for $g\in L_n$ and $\supp f_n\subseteq L_{n+1}$. Take $N\in \mathbb N$ such that $1/N< \eps / 4$ and set
        \[f(g) = \frac 1N\sum_{k=0}^{N-1}f_k(g).\]
        Then, $f$ satisfies the conclusions of the lemma.
    \end{proof}
    
    We end this section by proving \cref{bi-exact.map to Prob(beta^luG)}.
    \begin{proof}[Proof of \cref{bi-exact.map to Prob(beta^luG)}]
        The implication from left to right is trivial. To prove the converse implication, note that by exactness of $G$
        and \cref{amenable action.X cs Prob(X)}, the action $G\acts\Prob(\beta^{lu}G)$ is amenable. Take a sequence
        $\theta_n: \Prob(\beta^{lu}G)\to \Prob(G)$ such that
        \[\lim_{n\to \infty} \norm{\theta_n(g\cdot \mu) - g\cdot \theta_n(\mu) } = 0\]
        uniformly for $\mu \in \Prob(\beta^{lu}G)$ and uniformly on compact sets for $g\in G$. Now, for the composition
        $\eta_n = \theta_n\circ \eta$ we get
        \begin{align*}
            \norm{\eta_n(gkh) - g\cdot \eta_n(k)}
              &\le \norm{\eta(gkh) - g\cdot \eta(k)} + \norm{ \theta_n\big(g\cdot \eta(k) \big) - g\cdot \theta_n\big( \eta(k) \big) }\\
              &\le \norm{\eta(gkh) - g\cdot \eta(k)} + \sup_{\mu\in \Prob(\beta^{lu}G)} \norm{ \theta_n(g\cdot \mu) - g\cdot \theta_n\big( \mu \big) }
        \end{align*}
        whenever $g,h,k\in G$. It follows that
        \[\lim_{n\to \infty}\limsup_{k\to \infty} \sup_{g,h\in K}\norm{ \eta_n(gkh) - g\cdot \eta_n(k) } = 0\]
        for every compact set $K\subseteq G$.
        Hence, \cref{prop(S).equivalent.net.technical} concludes the proof.
    \end{proof}
    
    \section{Examples of groups in class \texorpdfstring{$\mathcal S$}{𝓢}}
    In this section, we prove \cref{bi-exact.R^2 rtimes SL_2(R),bi-exact.wreath product.introduction}. Before we
    can start the proof of these results, we need a few lemmas. The first lemma is a locally compact version
    of \cite[Lemma~15.2.6]{Brown2008}. This result can be proven in a similar way as in 
    \cite[Lemma~15.2.6]{Brown2008}. However, we provide a different proof not requiring exactness.
    
    \begin{proposition}\label{bi-exact.amenable subgroup}
        Let $G$ be an lcsc group and $K$ a closed, amenable subgroup. If there exists a Borel map 
        $\eta: G\to \Prob(G / H)$ 
        such that
        \[\lim_{k\to \infty} \norm{\eta(gkh) - g\cdot \eta(k)} = 0\]
        uniformly on compact sets for $g,h\in G$. Then, $G$ has property (S), i.e. there exists a 
        $\norm{.}$-continuous map 
        $\tilde \eta: G\to \Prob(G)$ satisfying \eqref{eq:prop (S)}.
    \end{proposition}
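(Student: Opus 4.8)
The plan is to use the amenability of $H$ to ``fatten'' the given map $\eta\colon G\to\Prob(G/H)$ into a map with values in $\Prob(G)$ while keeping the defining estimate under control; exactness of $G$ will play no role, which is the whole point of giving a new proof. The crucial first step, which I expect to carry the real content, is the observation that, since $H$ is a closed amenable subgroup, the translation action $G\acts G/H$ is topologically amenable. This is a standard fact (it amounts, via Morita equivalence of transformation groupoids, to amenability of $H$ itself; see, e.g., \cite{Anantharaman-Delaroche2000}), but one should \emph{not} try to prove it by the obvious formula built from a Borel section $\sigma\colon G/H\to G$ and an approximately invariant sequence $(\xi_n)_n$ in $\mathcal S(H)$: such a section need not be bounded on compact sets, so the cocycle $c(g,y)=\sigma(g\cdot y)^{-1}g\,\sigma(y)\in H$ need not stay in a compact subset of $H$, and the resulting estimate $\norm{g\cdot\mu_n(y)-\mu_n(g\cdot y)}=\norm{c(g,y)\cdot\xi_n-\xi_n}$ then breaks down. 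Since $G$ is lcsc, $G/H$ is locally compact, Hausdorff and $\sigma$-compact, so \cref{amenable action.uniform on space} upgrades this amenable action to a sequence of weakly* continuous maps $\mu_n\colon G/H\to\Prob(G)$ with $\sup_{y\in G/H}\norm{g\cdot\mu_n(y)-\mu_n(g\cdot y)}\to 0$ uniformly on compact sets for $g\in G$; getting uniformity over \emph{all} of $G/H$, not just on compacta, is essential here because $\eta(s)$ may spread its mass over all of $G/H$ as $s\to\infty$.

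Next I would set $\tilde\eta_n\colon G\to\Prob(G)$, $\tilde\eta_n(s)=\int_{G/H}\mu_n(y)\,\dif\eta(s)(y)$, i.e. $\int_G f\,\dif\tilde\eta_n(s)=\int_{G/H}\bigl(\int_G f\,\dif\mu_n(y)\bigr)\dif\eta(s)(y)$ for $f\in C_0(G)$. Since $\mu_n$ is weakly* continuous and $\eta$ is Borel, $\tilde\eta_n$ is Borel; and since each $\mu_n(y)$ is a probability measure, $\eta(s)$ is tight on the Polish space $G/H$, and $\{\mu_n(y):y\in C\}$ is uniformly tight for $C\subseteq G/H$ compact, the barycenter $\tilde\eta_n(s)$ is again a probability measure. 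Using $\norm{\mu_n(y)}=1$ for all $y$, the pushforward identity $\int_{G/H}\mu_n(y)\,\dif(g\cdot\eta(s))(y)=\int_{G/H}\mu_n(g\cdot y)\,\dif\eta(s)(y)$, and the triangle inequality, one obtains for all $g,h,s\in G$
\[
\norm{\tilde\eta_n(gsh)-g\cdot\tilde\eta_n(s)}\ \le\ \norm{\eta(gsh)-g\cdot\eta(s)}\ +\ \sup_{y\in G/H}\norm{\mu_n(g\cdot y)-g\cdot\mu_n(y)}.
\]

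Finally, fix a compact set $K\subseteq G$. The first term on the right satisfies $\limsup_{s\to\infty}\sup_{g,h\in K}(\,\cdot\,)=0$ by the hypothesis on $\eta$, and the second term tends to $0$ as $n\to\infty$ uniformly for $g\in K$ by the first step; hence $\lim_{n}\limsup_{s\to\infty}\sup_{g,h\in K}\norm{\tilde\eta_n(gsh)-g\cdot\tilde\eta_n(s)}=0$, while $\norm{\tilde\eta_n(s)}=1$ for every $n$ and $s$. Thus the sequence $(\tilde\eta_n)_n$ satisfies condition \ref{prop(S).equivalent.sequence} of \cref{prop(S).equivalent}, and that proposition then produces a $\norm{.}$-continuous map $\tilde\eta\colon G\to\Prob(G)$ satisfying \eqref{eq:prop (S)}, i.e. $G$ has property (S). The one genuine obstacle is the first step --- moving from amenability of the subgroup $H$ to topological amenability of the homogeneous action $G\acts G/H$ together with its uniform version --- after which everything is a soft computation; the only other point worth double-checking is the well-definedness of the barycenter as an honest probability measure.
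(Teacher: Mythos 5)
Your proof is correct, but it takes a genuinely different route from the paper's. The paper first replaces the Borel map $\eta$ by a $\norm{.}$-continuous one via \cref{prop(S).equivalent.continuous.technical}, and then (in \cref{bi-exact.amenable subgroup.technical}) identifies $\Prob(G)$ with $\Prob(G/H\times H)$ through a locally bounded Borel cross section and tensors $\eta(x)$ with an approximately invariant sequence $\nu_n\in\Prob(H)$; the resulting approximate equivariance (\cref{bi-exact.amenable subgroup.sequence}) is only uniform over \emph{tight} (weak* compact) families of measures, which is exactly why $\eta$ must be made continuous first and why the paper needs the variant gluing argument of \cref{prop(S).equivalent.technical.more general}, where the map is allowed to depend on the compact set $L'$. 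You instead push all the uniformity into the amenability side: you invoke topological amenability of $G\acts G/H$ (a standard fact the paper itself cites elsewhere, for $G/P$ in the proof of \cref{bi-exact.R^2 rtimes SL_2(R)}), upgrade it via \cref{amenable action.uniform on space} to maps $\mu_n$ that are approximately equivariant uniformly on \emph{all} of $G/H$, and take barycenters against $\eta(s)$; the double-limit criterion \ref{prop(S).equivalent.sequence} of \cref{prop(S).equivalent} then applies directly, with no continuity upgrade of $\eta$ and no tightness bookkeeping. Your caution about the naive section-based construction is well placed --- it is precisely the difficulty that the paper's \cref{bi-exact.amenable subgroup.sequence} circumvents by combining local boundedness of the section with tightness of $\eta(L')$. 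What each approach buys: yours is shorter and conceptually cleaner at the cost of importing the amenability of $G\acts G/H$ as a black box, whereas the paper's argument is self-contained (needing only a F{\o}lner-type sequence in $\Prob(H)$) and its \cref{bi-exact.amenable subgroup.technical} is deliberately stated in greater generality (with a morphism $\pi\colon G\to H$ and an auxiliary $G$-space $X$) because it is reused in \cref{bi-exact.semi-direct product} and in the proof of \cref{bi-exact.wreath product}, where your specialized barycenter argument would not directly apply. The two small points you flag (well-definedness of the barycenter as a genuine probability measure, and Borel measurability of $s\mapsto\tilde\eta_n(s)$) do need the tightness of weak* compact subsets of $\Prob(G)$ and the norm-continuity of the barycenter map respectively, but both go through as you indicate.
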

    \begin{proof}
        Using \cref{prop(S).equivalent.continuous.technical} we can assume that $\eta$ is $\norm{.}$-continuous. 
        The proof then follows easily from \cref{bi-exact.amenable subgroup.technical} below.
    \end{proof}
    
    Let $G$ be a group and $H\subseteq G$ a closed subgroup. Denote by $p: G \to G/ H$ the  quotient map. Let
    $\sigma: G / H \to G$ be a locally bounded Borel cross 
    section for $p$, i.e. a Borel map satisfying $p \circ \sigma = \Id_{G/H}$ that maps compact sets onto 
    precompact sets (see for instance \cite[Lemma~1.1]{Mackey1952representations} for the existence of such a map). 
    We can identify $G$ with $G / H \times H$ via the map the map 
    \begin{align}
        \phi: G\to G / H\times H: g\mapsto \big(gH,\sigma(gH)^{-1}g\big).
        \label{eq:identification.G and G/H times H}
    \end{align}
    Under this 
    identification the action by left translation is given by 
    $k\cdot (gH, h) = \big(kgH, \omega(k,gH) h\big)$, where $\omega(k,gH) = \sigma(kgH)^{-1}k\sigma(gH)$.
    Note that $\omega$ maps compact sets of $G \times G / H$ onto precompact sets of $G$. 
    
    The identification map $\phi$ is not continuous, but it is bi-measurable and maps (pre)compact sets to precompact 
    sets.
    This allows us to identify the spaces $\Prob(G)$ and $\Prob(G/H\times H)$ via the map $\mu \mapsto \phi_*\mu$.
    Note that this identification map is continuous with respect to the norm topology on both spaces (and hence 
    bi-measurable), but not with respect to the weak* topology on both spaces.
    We use the above identifications in the following two lemmas.
    \begin{lemma}\label{bi-exact.amenable subgroup.sequence}
        Let $G$ be a lcsc group and $H\subseteq G$ a closed, amenable subgroup. Let $(\nu_n)_n$ be a sequence in 
        $\Prob(H)$ satisfying 
        \[\lim_{n\to \infty}\norm{h\cdot \nu_n - \nu_n} = 0\]
        uniformly on compact sets for $h\in H$. Then,
        \[\lim_{n\to \infty} \norm{h\cdot (\mu \otimes \nu_n) - (h\cdot \mu) \otimes \nu_n} = 0\]
        uniformly on compact sets for $g\in G$ and $\mu\in \Prob(G/H)$, where we equipped $\Prob(G/H)$ with the weak*
        topology.
    \end{lemma}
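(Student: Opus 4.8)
The plan is to write the difference $g\cdot(\mu\otimes\nu_n)-(g\cdot\mu)\otimes\nu_n$ explicitly via disintegration over the $G/H$-coordinate, so that the twist introduced by the $G$-action is concentrated entirely in the $H$-coordinate, and then to exploit the asymptotic invariance of $(\nu_n)_n$.

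First I would record that, under the identification $\Prob(G)\cong\Prob(G/H\times H)$, the formula $g\cdot(xH,h)=(gxH,\omega(g,xH)h)$ gives, after testing against a function in $C_0(G/H\times H)$ and changing variables in the $H$-coordinate, the identity $g\cdot(\delta_{xH}\otimes\nu_n)=\delta_{gxH}\otimes(\omega(g,xH)\cdot\nu_n)$ for every $g\in G$ and $xH\in G/H$. Disintegrating $\mu\otimes\nu_n=\int_{G/H}\delta_{xH}\otimes\nu_n\,d\mu(xH)$ and integrating this identity against $\mu$, while using $(g\cdot\mu)\otimes\nu_n=\int_{G/H}\delta_{gxH}\otimes\nu_n\,d\mu(xH)$, yields
\[g\cdot(\mu\otimes\nu_n)-(g\cdot\mu)\otimes\nu_n=\int_{G/H}\delta_{gxH}\otimes\big(\omega(g,xH)\cdot\nu_n-\nu_n\big)\,d\mu(xH),\]
and hence $\norm{g\cdot(\mu\otimes\nu_n)-(g\cdot\mu)\otimes\nu_n}\le\int_{G/H}\norm{\omega(g,xH)\cdot\nu_n-\nu_n}\,d\mu(xH)$.

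Next I would fix a compact set $K\subseteq G$, a weak*-compact set $\mathcal C\subseteq\Prob(G/H)$ and $\eps>0$. A weak*-compact subset of $\Prob(G/H)$ is tight, so there is a compact set $C\subseteq G/H$ with $\mu(G/H\setminus C)<\eps$ for all $\mu\in\mathcal C$. Since $\omega$ maps compact subsets of $G\times G/H$ onto precompact subsets of $G$ and takes values in the closed subgroup $H$, the set $\Omega:=\overline{\omega(K\times C)}$ is a compact subset of $H$. Splitting the integral above over $C$ and its complement and using $\norm{\omega(g,xH)\cdot\nu_n-\nu_n}\le 2$ off $C$ gives, for all $g\in K$ and $\mu\in\mathcal C$,
\[\norm{g\cdot(\mu\otimes\nu_n)-(g\cdot\mu)\otimes\nu_n}\le\sup_{h\in\Omega}\norm{h\cdot\nu_n-\nu_n}+2\eps.\]
By hypothesis the supremum on the right tends to $0$ as $n\to\infty$, so $\limsup_n\sup_{g\in K,\,\mu\in\mathcal C}\norm{g\cdot(\mu\otimes\nu_n)-(g\cdot\mu)\otimes\nu_n}\le 2\eps$; letting $\eps\to 0$ finishes the argument.

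The computation in the first two paragraphs is essentially bookkeeping once the disintegration is set up; the one genuine subtlety — and the main obstacle — is upgrading the hypothesis, which only gives uniform convergence on \emph{compact} subsets of $H$, to an estimate that is uniform over the whole family $\mathcal C\subseteq\Prob(G/H)$. This is precisely where the weak* compactness (equivalently, tightness) of $\mathcal C$ enters: it confines the relevant cocycle values $\omega(g,xH)$ — for $g\in K$ and all but an $\eps$-fraction of the mass of $\mu$ — to a single compact subset of $H$, after which the hypothesis applies directly.
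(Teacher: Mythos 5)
Your proof is correct and follows essentially the same route as the paper's: both reduce the difference to $\int_{G/H}\norm{\omega(g,xH)\cdot\nu_n-\nu_n}\,d\mu(xH)$ via the identification $G\cong G/H\times H$, then combine tightness of the weak*-compact family of measures with the fact that the cocycle $\omega$ maps compact sets into a precompact subset of $H$. The only cosmetic difference is that the paper derives the estimate by pairing against test functions in $C_c(G/H\times H)$ rather than writing the disintegration identity at the level of measures.
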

    \begin{proof}
        Fix compact subsets $K\subseteq G$ and 
        $\mathcal L\subseteq \Prob(G/H)$. Take an arbitrary $\eps > 0$. There is a compact subset $L\subseteq G / H$
        such that $\mu(L)> 1-\eps$ for all $\mu\in \mathcal L$. Hence, for all $f\in C_c(G/H\times H)$, 
        $\mu\in \mathcal L$, $k\in G$ and $n\in \mathbb N$, we have
        \begin{align*}
        &\abs{\int_{G/H \times H}  f\dif k\cdot (\mu\otimes \nu_n) - \int_{G/H\times H} f\dif\, (k \cdot\mu)\otimes \nu_n}\\
        &\qquad= \abs{\int_{G/H} \int_H f\big(k\cdot (gH,h)\big) \dif \nu_n(h) \dif \mu(gH) - \int_{G/H} \int_H f(kgH,h) \dif \nu_n(h)\dif \mu(gH)}\\
        &\qquad= \abs{\int_{G/H} \int_H f\big(kgH, \omega(k,gH) h\big) \dif \nu_n(h) \dif \mu(gH) - \int_{G/H} \int_H f(kgH,h) \dif \nu_n(h)\dif \mu(gH)}\\
        &\qquad\le \int_{G/H} \int_H |f(kgH, h)| \dif \big| \omega(k,gH)\cdot \nu_n - \nu_n\big|(h)\dif \mu(gH)\\
        &\qquad\le \norm{f}_\infty \left(2 \eps + \int_L \norm{\omega(k,gH)\cdot \nu_n - \nu_n}\dif \mu(gH) \right),
        \end{align*}
        where $\big| \omega(k,gH)\cdot \nu_n - \nu_n\big|$ denotes the total variation measure of 
        $\omega(k,gH)\cdot \nu_n - \nu_n$. Since $\omega$ maps compact sets to precompact sets, we can find an 
        $n_0\in \mathbb N$ such that $\norm{\omega(k,gH)\cdot \nu_n - \nu_n} < \eps$ for all $n\ge n_0$, 
        all $k\in K$ and all $gH\in L$. We conclude that
        \[\norm{k\cdot (\mu\otimes \nu_n) - (k\cdot \mu)\otimes \nu_n} \le 3\eps\]
        whenever  $n\ge n_0$, $\mu\in \mathcal L$ and $k\in K$, thus proving the result.
    \end{proof}
    
    \begin{lemma}\label{bi-exact.amenable subgroup.technical}
        Let $G$ and $H$ be lcsc groups, $\pi:G\to H$ a continuous morphism and $K\subseteq H$ a closed, amenable 
        subgroup. Let $G\acts X$ be a continuous action on some $\sigma$-compact space $X$. Let $G\acts \Prob(H)$ 
        (resp. 
        $G\acts \Prob(H / K)\:$) be defined by $g\cdot \mu = \pi(g)\cdot \mu$ for $g\in G$ and $\mu\in \Prob(H)$ 
        (resp. $\mu\in \Prob(H/K)\:$).
        If there exists a weakly* continuous map $\eta: X\to \Prob(H/K)$ such that
        \[\lim_{x\to \infty} \norm{\eta(gx) - g\cdot \eta(x)} = 0\]
        uniformly on compact sets for $g\in G$. Then, there exists a Borel map 
        $\tilde \eta: X\to \Prob(H)$ such 
        that
        \[\lim_{x\to \infty} \norm{\tilde \eta(gx) - g\cdot \tilde \eta(x)} = 0\]
        uniformly on compact sets for $g\in G$. Moreover, if $\eta$ is assumed to be $\norm{.}$-continuous then also 
        $\tilde \eta$ can be assumed to be $\norm{.}$-continuous.
    \end{lemma}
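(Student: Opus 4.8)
The plan is to lift $\eta$ through the quotient map $H\to H/K$ by smearing each point along its fibre with an approximately invariant probability measure on the amenable group $K$, and then to feed the resulting sequence of maps into the flexible averaging criterion of Remark~\ref{prop(S).equivalent.technical.more general}. Since $K$ is amenable and $\sigma$-compact, I first fix a sequence $(\nu_n)_n$ in $\Prob(K)$ with $\norm{k\cdot\nu_n-\nu_n}\to 0$ uniformly on compact sets for $k\in K$ (Reiter's property together with the embedding $\mathcal S(K)\hookrightarrow\Prob(K)$). Using the identification \eqref{eq:identification.G and G/H times H} of $\Prob(H)$ with $\Prob(H/K\times K)$ (applied to the pair $K\subseteq H$), let $\Psi\colon\Prob(H/K\times K)\to\Prob(H)$ denote the resulting norm-isometric bijection; it intertwines left translation on $H$ with the twisted action $m\cdot(gK,h)=(mgK,\omega(m,gK)h)$, and hence, after composing with $\pi$, is $G$-equivariant. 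I then set $\tilde\eta_n(x)=\Psi(\eta(x)\otimes\nu_n)$. Each $\tilde\eta_n$ is a Borel map $X\to\Prob(H)$ with $\norm{\tilde\eta_n(x)}=1$ for all $x$ and $n$, and it is $\norm{.}$-continuous whenever $\eta$ is, since both $\mu\mapsto\mu\otimes\nu_n$ and $\Psi$ are isometric for the total-variation norm.

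The computational heart is the following estimate. Using that $\Psi$ is an isometric $G$-equivariant bijection and that $\norm{\rho\otimes\nu_n}=\norm{\rho}$ for every signed measure $\rho$, one gets, for $g$ in a fixed compact set $Q\subseteq G$ and $x\in X$,
\[\norm{\tilde\eta_n(gx)-g\cdot\tilde\eta_n(x)}\ \le\ \norm{\eta(gx)-g\cdot\eta(x)}\ +\ \norm{(\pi(g)\cdot\eta(x))\otimes\nu_n-\pi(g)\cdot(\eta(x)\otimes\nu_n)},\]
where in the second term $\pi(g)\cdot$ refers to the twisted action on $\Prob(H/K\times K)$. The first term is controlled, uniformly in $n$, by the hypothesis on $\eta$: for every $\eps>0$ and compact $Q\subseteq G$ there is a compact $L\subseteq X$ with $\sup_{g\in Q}\norm{\eta(gx)-g\cdot\eta(x)}<\eps/2$ for all $x\in X\setminus L$. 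For the second term, Lemma~\ref{bi-exact.amenable subgroup.sequence} applied to the amenable subgroup $K\subseteq H$ --- with the compact set $\pi(Q)\subseteq H$ playing the role of the compact set of group elements, this being compact since $\pi$ is continuous --- gives $\sup_{h\in\pi(Q)}\sup_{\mu\in\mathcal L}\norm{(h\cdot\mu)\otimes\nu_n-h\cdot(\mu\otimes\nu_n)}\to 0$ as $n\to\infty$, for every compact set $\mathcal L\subseteq\Prob(H/K)$.

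The one genuine obstacle, and the reason Remark~\ref{prop(S).equivalent.technical.more general} is invoked rather than Lemma~\ref{prop(S).equivalent.net.technical} directly, is that the family $\{\eta(x):x\in X\}$ need not be relatively weak*-compact in $\Prob(H/K)$ --- mass of probability measures on the non-compact space $H/K$ can escape to infinity in a weak* limit --- so the second term above cannot be made small uniformly over all large $x$ for a single $n$. But weak* continuity of $\eta$ guarantees that $\eta$ sends each compact subset of $X$ onto a compact (hence tight) subset of $\Prob(H/K)$, and Remark~\ref{prop(S).equivalent.technical.more general} only requires the approximating map to be produced \emph{after} one specifies the compact set $L'\subseteq X$ on which good behaviour is wanted. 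Thus, given $\eps>0$ and compact $Q\subseteq G$, I take $L\subseteq X$ from the hypothesis on $\eta$ as above, and for an arbitrary compact $L'\subseteq X$ I pick $n=n(L')$ so large that the limit above is $<\eps/2$ with $\mathcal L=\eta(L')$; the displayed estimate then gives $\norm{\tilde\eta_{n(L')}(gx)-g\cdot\tilde\eta_{n(L')}(x)}<\eps$ for all $g\in Q$ and $x\in L'\setminus L$, which --- since $\norm{\tilde\eta_{n(L')}(x)}=1$ --- is precisely condition~\eqref{eq:prop(S).equivalent.technical.more general}. As $X$ is $\sigma$-compact by assumption and $H$ is $\sigma$-compact, Remark~\ref{prop(S).equivalent.technical.more general} (applied with $Y=H$) now produces a Borel map $\tilde\eta\colon X\to\Prob(H)$ with $\lim_{x\to\infty}\norm{\tilde\eta(gx)-g\cdot\tilde\eta(x)}=0$ uniformly on compact sets for $g\in G$; and when $\eta$ is $\norm{.}$-continuous the maps $\tilde\eta_{n(L')}$ are too, so the continuity refinement of that statement (proved as in Lemma~\ref{prop(S).equivalent.net.technical}) yields a $\norm{.}$-continuous $\tilde\eta$. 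Apart from this tightness point, what remains is routine bookkeeping with the cross-section identification.
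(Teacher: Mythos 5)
Your proposal is correct and follows essentially the same route as the paper: identify $\Prob(H)$ with $\Prob(H/K\times K)$ via a cross section, tensor $\eta(x)$ with an approximately invariant sequence $(\nu_n)_n$ in $\Prob(K)$, control the error term with \cref{bi-exact.amenable subgroup.sequence} applied to the weak*-compact set $\eta(L')$, and conclude via \cref{prop(S).equivalent.technical.more general}. You also correctly identify the one subtle point --- that the choice of $n$ must depend on the compact set $L'$ because $\eta(X)$ need not be tight, which is exactly why the paper invokes the remark rather than \cref{prop(S).equivalent.net.technical} directly.
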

    \begin{proof}
        Fixing a locally bounded Borel cross section $\sigma: H/K\to H$ for the quotient map $p: H\to H / K$,
        we can identify $H$
        with $H / K\times K$ and $\Prob(H)$ with $\Prob(H/K\times K)$ as in \eqref{eq:identification.G and G/H times H}.
        
        Since $K$ is amenable, we can take a sequence $(\nu_n)_n$ in $\Prob(K)$ such that 
        $\norm{k\cdot \nu_n - \nu_n}\to 0$ uniformly on compact sets for $k\in K$ whenever $n\to \infty$. Using 
        \cref{bi-exact.amenable subgroup.sequence}, we construct
        maps as in \cref{prop(S).equivalent.technical.more general}
        as follows. Fix an $\eps > 0$ and a compact $C\subseteq G$. Take a compact $L\subseteq X$ such that 
        $\norm{\eta(gx) - g\cdot \eta(x)} < \eps$ for all $g\in C$ and $x\in X\setminus L$. Fix any compact 
        set $L'\subseteq X$. Applying \cref{bi-exact.amenable subgroup.sequence} to the weak* compact set $\eta(L')$, 
        we find an $n\in \mathbb N$ such 
        that
        \[\norm{\big(g\cdot \eta(x)\big) \otimes \nu_n - g\cdot \big(\eta(x)\otimes \nu_n\big) } < \eps\]
        for any $x\in L'$ and $g\in C$. Hence,
        \[\norm{\eta(gx) \otimes \nu_n - g\cdot \big(\eta(x)\otimes \nu_n\big) } 
          \le \norm{\eta(gx) - g\cdot \eta(x)} + 
              \norm{\big(g\cdot \eta(x)\big) \otimes \nu_n - g\cdot \big(\eta(x)\otimes \nu_n\big) } \le 2\eps\]
        for any $g\in C$ and any $x\in L'\setminus L$. We conclude that the map $\mu: X\to \Prob(H)$ defined by  
        $\mu(x) = \eta(x)\otimes \nu_n$ is as in \eqref{eq:prop(S).equivalent.technical.more general}. 
        Moreover, if $\eta$ is 
        $\norm{.}$-continuous, then so is $\mu$.
    \end{proof}
    
    The second result that we need before proving 
    \cref{bi-exact.R^2 rtimes SL_2(R),bi-exact.wreath product.introduction} characterizes when a semi-direct
    product belongs to class $\mathcal S$. By definition a semi-direct product $G = B\rtimes H$ belongs to class 
    $\mathcal S$ whenever it is exact and there exists a map $\eta: G\to \Prob(G)$ satisfying 
    \begin{align*}
        \norm{\mu\big((a,k)(b,h)(a',k')\big) - (a,k)\cdot \mu(b,h)} \to 0
    \end{align*}
    uniformly on compact sets for $(a,k), (a',k')\in G$ whenever $(b,h)\to \infty$. The result below shows that is 
    suffices that there exist \emph{two} such maps one of which satisfies the convergence above when $b\to \infty$ and
    the other when $h\to \infty$.
    \begin{proposition}\label{bi-exact.semi-direct product.more general}
        Let $G = B\rtimes_\alpha H$ be a semi-direct product of lcsc groups. Then, $G$ is in class $\mathcal S$ if and
        only if $B$ and $H$ are exact, and there exists Borel maps $\mu: G\to \Prob(G)$ and $\nu: G\to \Prob(G)$ 
        such that
        \begin{align}
            \lim_{b\to \infty}\norm{\mu\big((a,k)(b,h)(a',k')\big) - (a,k)\cdot \mu(b,h)} = 0
            \label{bi-exact.semi-direct product.more general.b to infty}
        \end{align}
        uniformly on compact sets for $a,a'\in B$ and $k,h,k'\in H$, and such that
        \begin{align}
            \lim_{h\to \infty} \norm{\nu\big((a,k)(b,h)(a',k')\big) - (a,k)\cdot \nu(b,k)}
            \label{bi-exact.semi-direct product.more general.h to infty}
        \end{align}
        uniformly for $b\in B$ and uniformly on compact sets for $a,a'\in B$ and $k,k'\in H$.
    \end{proposition}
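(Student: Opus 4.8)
The plan is to prove the two implications separately, with essentially all the work in the ``if'' direction. For ``only if'', if $G$ lies in class $\mathcal S$ then $G$ is exact, and since $B$ and $\{e\}\times H$ are closed subgroups of $G$, both $B$ and $H$ are exact; for the two maps one can simply take $\mu=\nu=\eta$, where $\eta\colon G\to\Prob(G)$ witnesses that $G$ is in class $\mathcal S$. Indeed, every compact subset of the topological space $G=B\times H$ is contained in a box $B_0\times H_0$ with $B_0\subseteq B$ and $H_0\subseteq H$ compact, so $(b,h)\to\infty$ in $G$ whenever $b\to\infty$ in $B$ and $h$ stays in a compact set, which gives \eqref{bi-exact.semi-direct product.more general.b to infty}, while $(b,h)\notin B_0\times H_0$ for \emph{every} $b\in B$ as soon as $h\notin H_0$, which gives \eqref{bi-exact.semi-direct product.more general.h to infty} with the required uniformity over $b\in B$.

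For the ``if'' direction, $G$ is exact as an extension of the exact group $H$ by the exact group $B$, so it remains to build a suitable map $G\to\Prob(G)$ out of $\mu$ and $\nu$. Fix increasing compact exhaustions $B_n\uparrow B$ and $K_n\uparrow H$ by symmetric sets containing the identity. For each $n$, the uniformity over $b\in B$ in \eqref{bi-exact.semi-direct product.more general.h to infty}, applied with $\eps=1/n$ and compact sets $B_n,K_n$, yields a compact set $H_n\subseteq H$ such that $\norm{\nu\big((a,k)(b,h)(a',k')\big)-(a,k)\cdot\nu(b,h)}<1/n$ whenever $a,a'\in B_n$, $k,k'\in K_n$, $b\in B$ and $h\notin H_n$. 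By \cref{slowly changing function} applied in $H$, pick $\chi_n\in C_c(H)$ with $0\le\chi_n\le 1$, with $\chi_n\equiv 1$ on the compact set $K_nH_nK_n$, and with $\abs{\chi_n(khk')-\chi_n(h)}<1/n$ for all $k,k'\in K_n$ and $h\in H$. Define the Borel map
\[\eta_n\colon G\to\Prob(G),\qquad \eta_n(b,h)=\chi_n(h)\,\mu(b,h)+\big(1-\chi_n(h)\big)\,\nu(b,h),\]
which has total mass $1$ at every point. I claim that $(\eta_n)_n$ satisfies the two conditions of \cref{prop(S).equivalent}~\ref{prop(S).equivalent.sequence}; combined with exactness of $G$, this gives $G\in\mathcal S$.

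To verify the claim, fix a compact $K\subseteq G$ and $n$ so large that $K\subseteq B_n\times K_n$; write $p=(a,k)$, $q=(a',k')$ for the outer elements, $w=(b,h)$, and let $t'=khk'$ be the $H$-component of $pwq$. Expanding $\eta_n$ gives
\[\norm{\eta_n(pwq)-p\cdot\eta_n(w)}\le\chi_n(t')\norm{\mu(pwq)-p\cdot\mu(w)}+\big(1-\chi_n(t')\big)\norm{\nu(pwq)-p\cdot\nu(w)}+2\abs{\chi_n(t')-\chi_n(h)},\]
and the last term is $<2/n$. For the $\nu$-term: if $h\in H_n$ then $t'\in K_nH_nK_n$, so $\chi_n(t')=1$ and the term is $0$; if $h\notin H_n$ then $\norm{\nu(pwq)-p\cdot\nu(w)}<1/n$ by the choice of $H_n$, \emph{uniformly in} $b\in B$, so the term is $<1/n$. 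For the $\mu$-term: if $h\notin K_n(\operatorname{supp}\chi_n)K_n$ then $t'\notin\operatorname{supp}\chi_n$, so $\chi_n(t')=0$ and the term is $0$; otherwise $h$ lies in the compact set $L^H_n:=K_n(\operatorname{supp}\chi_n)K_n$, and \eqref{bi-exact.semi-direct product.more general.b to infty} (with $k,k',h$ now ranging in the compact set $K_n\cup L^H_n$) supplies a compact $L^B_n\subseteq B$, depending only on $n$, with $\norm{\mu(pwq)-p\cdot\mu(w)}<1/n$ whenever $b\notin L^B_n$. Hence $\norm{\eta_n(pwq)-p\cdot\eta_n(w)}<4/n$ for all $w\notin L^B_n\times L^H_n$, so $\limsup_{w\to\infty}\sup_{p,q\in K}\norm{\eta_n(pwq)-p\cdot\eta_n(w)}\le 4/n\to 0$, while $\norm{\eta_n(w)}=1$ for every $w$; this proves the claim. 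The one delicate point — the main obstacle — is the ``transition region'' where $\chi_n$ is neither $0$ nor $1$ and the $\nu$-term cannot be discarded: for a naive cutoff it would be uncontrolled, since $\nu$ is only assumed to behave well as $h\to\infty$, and the resolution is precisely to let the plateau of $\chi_n$ swallow the compact set $H_n$ controlling the $\nu$-estimate, so that $h\notin H_n$ holds automatically on the transition region — which is exactly the step exploiting the uniformity of $\nu$ over all of $B$.
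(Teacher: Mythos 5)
Your proof is correct and follows essentially the same route as the paper: the same ``only if'' observation, exactness of $G$ via the extension theorem, and the same gluing of $\mu$ and $\nu$ by a slowly varying cutoff in the $H$-variable (via \cref{slowly changing function}) whose plateau absorbs the compact set controlling the $\nu$-estimate, followed by an appeal to \cref{prop(S).equivalent}. The only cosmetic differences are that you evaluate the cutoff at $khk'$ rather than at $h$ and enlarge its plateau to $K_nH_nK_n$, whereas the paper keeps the plateau equal to $\widetilde L_H$ and handles the transition region as a separate case where both estimates hold.
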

    \begin{proof}
        The only if part is immediate. Indeed, the map $\eta: G\to \Prob(G)$ as in the definition of class 
        $\mathcal S$, satisfies both \eqref{bi-exact.semi-direct product.more general.b to infty} and 
        \eqref{bi-exact.semi-direct product.more general.h to infty}. Moreover, the groups $B$ and $H$ are exact as 
        subgroups of an exact group.
        
        To prove the converse, note first that $G$ is exact as an extension of an exact group by an exact group (see 
        \cite[Theorem~5.1]{KW1999permanence}). Let 
        $\mu: G\to \Prob(G)$ and $\nu: G\to \Prob(G)$ be as above. By \cref{prop(S).equivalent}, it suffices to prove
        that for every compact $K\subseteq G$ and every $\eps > 0$, there exists a Borel map $\eta: G\to \Prob(G)$
        and a compact $L\subseteq G$ such that
        \begin{align}
            \norm{\eta\big((a,k)(b,h)(a',k')\big) - (a,k)\cdot \eta(b,h)} < \eps
            \label{eq.bi-exact.semi-direct prod.TP}
        \end{align}
        for all $(a,k), (a',k')\in K$ and all $(b,h)\in G\setminus L$.
        
        So, fix a compact $K\subseteq G$ and an $\eps > 0$. 
        Let $K_B\subseteq B$ and $K_H\subseteq H$ be compact subsets such that 
        $K\subseteq \set{(b,h)}{b\in K_B, h\in K_H}$.
        By assumption, we can take a compact set $\widetilde L_H\subseteq H$ such that
        \begin{align}
            \norm{\nu\big((a,k)(b,h)(a',k')\big) - (a,k)\cdot \nu(b,k)} < \frac \eps 2
            \label{eq:bi-exact.semi-direct product.nu}
        \end{align}
        whenever $a,a'\in K_B$, $b\in B$, $k,k'\in K_H$ and $h\in H\setminus \widetilde L_H$. 
        
        Using \cref{slowly changing function}, we take a function $f\in C_c(H)$ such that $f(h) = 1$ for 
        $h\in \widetilde L_H$ and $|f(khk') - f(h)| < \eps / 4$ whenever $h\in H$ and $k,k'\in K_H$. Set 
        $L_H = \supp f$. Now, 
        we can take 
        a compact set $L_B\subseteq B$ such that
        \begin{align}
            \norm{\mu\big((a,k)(b,h)(a',k')\big) - (a,k)\cdot \mu(b,h)} < \frac \eps 2
            \label{eq:bi-exact.semi-direct product.mu}
        \end{align}
        whenever $a,a'\in K_B$, $b\in G\setminus L_B$, $k,k'\in K_H$ and $h\in L_H$.
        
        Define $\eta: G\to \Prob(G)$ by
        \[\eta(b,h) = f(h)\mu(b,h) + \big(1-f(h)\big) \nu(b,h)\]
        for $(b,h)\in G$. Set $L = \set{(b,h)\in G}{b\in L_B, h\in L_H}$. Fix $(a,k), (a',k')\in K$ and 
        $(b,k)\in G\setminus L$.  We have
        \begin{IEEEeqnarray*}{rCl}
            \norm{\eta\big((a,k)(b,h)(a',k')\big) - (a,k)\cdot \eta(b,h)} 
            &\le& f(h)\norm{\mu\big((a,k)(b,h)(a',k')\big) - (a,k)\cdot \mu(b,h)} \\
            && + \big(1-f(h)\big) \norm{\nu\big((a,k)(b,h)(a',k')\big) - (a,k)\cdot \nu(b,h)} + 2\;|f(khk') - f(h)|\\
            &\le& f(h)\norm{\mu\big((a,k)(b,h)(a',k')\big) - (a,k)\cdot \mu(b,h)} \\
            && + \big(1-f(h)\big) \norm{\nu\big((a,k)(b,h)(a',k')\big) - (a,k)\cdot \nu(b,h)} + \frac \eps 2
        \end{IEEEeqnarray*}
        We are in one of the following three cases.
        \begin{enumerate}[label=\textit{Case \arabic*.}]
            \item If $h\in H\setminus L_H$, then $f(h) = 0$ and \eqref{eq:bi-exact.semi-direct product.nu} holds.
            \item If $h\in L_H\setminus \widetilde L_H$ and $b\in B\setminus L_B$, then both \eqref{eq:bi-exact.semi-direct product.nu} and \eqref{eq:bi-exact.semi-direct product.mu} hold.
            \item If $h\in \widetilde L_H$ and $b\in B\setminus L_B$, then $f(h) = 1$ and
            \eqref{eq:bi-exact.semi-direct product.mu} holds.
        \end{enumerate}
        In all three cases, we conclude that \eqref{eq.bi-exact.semi-direct prod.TP} holds, thus proving the 
        proposition.
    \end{proof}
    \begin{remark}\label{bi-exact.semi-direct product.more general.remark on b map}
        Note that \eqref{bi-exact.semi-direct product.more general.b to infty} is equivalent with the existence
        of a map $\tilde \mu: B\to \Prob(G)$ satisfying
        \[\lim_{b\to \infty}\norm{\tilde \mu(aba') - a\cdot \tilde \mu(b)} = 0\mathand 
          \lim_{b\to \infty} \norm{\tilde \mu\big(\alpha_h(b)\big) - h\cdot \tilde \mu(b)} = 0\]
        uniformly on compact sets for $a,a'\in B$ and $h\in H$. Indeed, the restriction of a map as in 
        \eqref{bi-exact.semi-direct product.more general.b to infty} satisfies the above equations. Conversely, 
        given a map $\tilde \mu$ as above, the map $\mu: G\to \Prob(G)$ defined by $\mu(b,h) = \tilde \mu(b)$ 
        satisfies 
        \eqref{bi-exact.semi-direct product.more general.b to infty}.
    \end{remark}
    
    When the group $B$ is amenable, the previous result specializes to the corollary below.
    In the setting of countable groups, this result was proved by 
    \citeauthor{Ozawa2006} in \cite[proof of Corollary~4.5]{Ozawa2006} and \cite[Section~3]{Ozawa2009}. However, the 
    proof provided there does not carry over to the locally compact setting, since, as we explained in the 
    introduction, the characterization of class
    $\mathcal S$ in terms of a u.c.p. map $\varphi: \Cred(G)\otmin \Cred(G)\to B(L^2(G))$ satisfying $\varphi(x\otimes y) -\lambda(x)\rho(y)\in K(L^2(G))$ (see 
    \cite[Proposition~15.1.4]{Brown2008}) does not hold in this setting. 
    Also the method used in 
    \cite[Section~15.2]{Brown2008} can not be applied, since for a locally compact group $G$ the crossed product 
    $C(X)\rtimes_{r} G$ can be nuclear while $G\acts X$ is not amenable.
    
    \begin{corollary}\label{bi-exact.semi-direct product}
        Let $G = B\rtimes_\alpha H$ be a semi-direct product of lcsc groups with $B$ amenable. 
        Then $G$ is in class $\mathcal S$ if and only if $H$ is in class $\mathcal S$ and there is a Borel map 
        $\mu: B\to \Prob(H)$ such that
        \[\lim_{b\to \infty} \norm{\mu\big(\alpha_h(b)\big)- h\cdot \mu(b)} = 0\mathand \lim_{b\to \infty} \norm{\mu(aba') - \mu(b)} = 0\]
        uniformly on compact sets for $h\in H$ and $a,a'\in B$.
    \end{corollary}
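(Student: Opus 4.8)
The plan is to reduce everything to \cref{bi-exact.semi-direct product.more general}. Write $\pi\colon G\to H$ for the quotient morphism $(b,h)\mapsto h$, and identify $H$ with the closed subgroup $\{e\}\times H\le G$; then $B=\ker\pi$ is a closed normal amenable subgroup, the homogeneous space $G/B$ is canonically $H$ with $G$ acting through $\pi$ and left translation, and $\pi_*\colon\Prob(G)\to\Prob(G/B)$ is $G$-equivariant and norm non-increasing. Recall that $B$ (amenable) and $H$ (in class $\mathcal S$) are exact, hence so is $G$ as an extension.

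\emph{Only if.} Suppose $G\in\mathcal S$ with $\eta\colon G\to\Prob(G)$ as in the definition. Since $B\cong B\times\{e\}$ and $H\cong\{e\}\times H$ are closed subgroups of $G$, both are exact. Setting $\bar\eta(h)=\pi_*\bigl(\eta((e,h))\bigr)$ and using that $h\mapsto(e,h)$ is a proper embedding, restricting the class $\mathcal S$ estimate for $\eta$ to arguments in $\{e\}\times H$ and pushing it forward yields $\lim_{h\to\infty}\norm{\bar\eta(g_1hg_2)-g_1\cdot\bar\eta(h)}=0$ uniformly on compacts, so, $H$ being exact, $H\in\mathcal S$ by \cref{prop(S).equivalent}. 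Setting $\mu(b)=\pi_*\bigl(\eta((b,e))\bigr)$ and using $\alpha_h(b)=(e,h)(b,e)(e,h)^{-1}$, $aba'=(a,e)(b,e)(a',e)$ and $\pi(B)=\{e\}$, the class $\mathcal S$ estimate for $\eta$ pushes forward to $\lim_{b\to\infty}\norm{\mu(\alpha_h(b))-h\cdot\mu(b)}=0$ and $\lim_{b\to\infty}\norm{\mu(aba')-\mu(b)}=0$ uniformly on compacts, which is exactly the required map.

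\emph{If.} Let $\eta_H\colon H\to\Prob(H)$ be a class $\mathcal S$ map for $H$ and let $\mu_0\colon B\to\Prob(H)$ be the given Borel map. I will construct Borel maps $\mu_1,\nu_1\colon G\to\Prob(G/B)$ satisfying conditions \eqref{bi-exact.semi-direct product.more general.b to infty} and \eqref{bi-exact.semi-direct product.more general.h to infty} of \cref{bi-exact.semi-direct product.more general}, but with target $\Prob(G/B)$ in place of $\Prob(G)$. For $\nu_1:=\eta_H\circ\pi$ this is immediate, since $\nu_1\bigl((a,k)(b,h)(a',k')\bigr)=\eta_H(khk')$ and $(a,k)\cdot\nu_1(b,h)=k\cdot\eta_H(h)$, so their distance is $\norm{\eta_H(khk')-k\cdot\eta_H(h)}$, which tends to $0$ as $h\to\infty$ uniformly on compacts for $k,k'$ and with no dependence on $a,a',b$. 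For $\mu_1(b,h):=\mu_0(b)$, note that the $B$-coordinate of $(a,k)(b,h)(a',k')$ is $a\,\alpha_k(b)\,\alpha_{kh}(a')$ while $(a,k)$ acts on $\Prob(G/B)=\Prob(H)$ through $\pi(a,k)=k$; as $b\to\infty$ with $a,a',k,h,k'$ in compacts, the outer arguments $a$ and $\alpha_{kh}(a')$ stay in compact subsets of $B$, so first the hypothesis $\norm{\mu_0(aba')-\mu_0(b)}\to0$ and then the hypothesis $\norm{\mu_0(\alpha_k(b))-k\cdot\mu_0(b)}\to0$ give $\norm{\mu_1\bigl((a,k)(b,h)(a',k')\bigr)-(a,k)\cdot\mu_1(b,h)}\to0$ uniformly on compacts.

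Finally I glue. The construction in the proof of \cref{bi-exact.semi-direct product.more general} builds its map from $\mu$ and $\nu$ using only convex combinations and the norm of the target, so applying it verbatim to $\mu_1,\nu_1$ gives, for every compact $K\subseteq G$ and $\eps>0$, a Borel map $G\to\Prob(G/B)$ that is $\eps$-equivariant off some compact set; \cref{prop(S).equivalent.technical.more general}, applied with the $G\times G$-action $(g,k)\cdot x=gxk^{-1}$ on $X=G$ and the $G\times G$-action on $Y=G/B$ via the first coordinate, assembles these into a single Borel map $\eta\colon G\to\Prob(G/B)$ with $\lim_{k\to\infty}\norm{\eta(gkh)-g\cdot\eta(k)}=0$ uniformly on compacts. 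Then \cref{bi-exact.amenable subgroup}, applied with the closed amenable subgroup $B$, shows that $G$ has property (S), and combined with exactness this gives $G\in\mathcal S$. The content here is modest and the main obstacle is organizational: $\mu_0$ and $\eta_H$ produce only maps into $\Prob(G/B)$ rather than $\Prob(G)$, so one must carry this auxiliary target through the gluing and trade it for $\Prob(G)$ only at the very end, via the amenability of $B$; keeping the ``uniformly for $b\in B$'' and ``uniformly on compacts'' quantifiers separate is the other thing to watch.
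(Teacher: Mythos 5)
Your argument is correct, and it reaches the conclusion by a genuinely different organization of the same ingredients. The paper first upgrades each of the two partial maps to have target $\Prob(G)$ — it applies \cref{bi-exact.amenable subgroup.technical} twice, once to $\mu$ with the auxiliary group $(B\times B)\rtimes H$ acting on $B$ and once to $\eta_H$ with $G\times G$ acting on $H$, in both cases exploiting amenability of $B$ — and only then invokes \cref{bi-exact.semi-direct product.more general} as a black box. You instead keep the target $\Prob(G/B)\cong\Prob(H)$ throughout: you build $\mu_1=\mu_0\circ(\text{$B$-coordinate})$ and $\nu_1=\eta_H\circ\pi$ directly, observe (correctly) that the cut-off/convex-combination gluing in the proof of \cref{bi-exact.semi-direct product.more general} is insensitive to the target and feeds into \cref{prop(S).equivalent.technical.more general}, and trade $\Prob(G/B)$ for $\Prob(G)$ only once at the very end via \cref{bi-exact.amenable subgroup}. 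Your route uses amenability of $B$ a single time and avoids the two auxiliary-group constructions, at the cost of having to reopen the proof of \cref{bi-exact.semi-direct product.more general} rather than citing its statement; the paper's route keeps that proposition as a clean interface but pays with two applications of the technical lemma. The verifications you flag as the real content — that $\alpha_k(b)\to\infty$ uniformly for $k$ in compacts, that $\alpha_{kh}(a')$ stays in a compact because $h$ is also confined to a compact in condition \eqref{bi-exact.semi-direct product.more general.b to infty}, and that $\nu_1$ is trivially uniform over all of $B$ — are exactly the points that need checking, and they all go through. Your "only if" direction is likewise fine and is essentially the paper's (via \cref{bi-exact.semi-direct product.more general} and the pushforward $\pi_*$), just written out more explicitly.
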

    \begin{proof}
        The only if part follows immediately from 
        \cref{bi-exact.semi-direct product.more general,bi-exact.semi-direct product.more general.remark on b map}.
        Conversely, let $\mu: B\to \Prob(H)$ be a map as above. By \cref{prop(S).equivalent.continuous.technical}, we can
        assume that $\mu$ is $\norm{.}$-continuous. Let $H\acts^{\beta} B\times B$ be the diagonal action and let 
        $G_0 = (B\times B)\rtimes_\beta H$.
        We apply \cref{bi-exact.amenable subgroup.technical} to the map $\mu$ with the morphism $\pi: G_0 \to G$
        given by 
        $\pi\big(a,a',h\big) = (a,h)$ for $a,a'\in B$ and $h\in H$, the space $X = B$, the action $G_0 \acts X$ given 
        by 
        $\big(a,a',h\big)\cdot b = a\alpha_h(b)(a')^{-1}$ for $a,a',b\in B$ and $h\in H$, and the amenable subgroup 
        $K = B\subseteq G$. This yields a 
        map $\tilde \mu: B\to \Prob(G)$
        as in \cref{bi-exact.semi-direct product.more general.remark on b map}. 
        Similarly, let $\eta: H\to \Prob(H)$
        as in the definition of class $\mathcal S$. Applying \cref{bi-exact.amenable subgroup.technical} to the map 
        $\nu$ with the morphism $\pi: G\times G\to G$ given by $\pi(g,g') = g$ for $g,g'\in G$, the space $X = H$, the 
        action 
        $G\times G\acts X$ induced by left and right translation and the amenable subgroup $K = B$,
        yields a
        map $\tilde \nu: H\to \Prob(G)$ satisfying
        \[\lim_{h\to \infty} \norm{\tilde \nu (khk') - (a,k)\cdot \tilde \nu(h)} = 0\]
        uniformly on compact sets for $k,k'\in G$ and $a\in B$. Now, the map $\nu: G\to \Prob(G)$ defined by 
        $\nu(b,h)= \tilde \nu(h)$ satisfies 
        \eqref{bi-exact.semi-direct product.more general.h to infty}.
    \end{proof}
    
    We are now ready to prove \cref{bi-exact.R^2 rtimes SL_2(R)} and \cref{bi-exact.wreath product.introduction}.
    \begin{proof}[Proof of \cref{bi-exact.R^2 rtimes SL_2(R)}]
        The proof presented here is inspired by \cite{Ozawa2009}. 
        
        Write $G = \SL_2(\mathbb R)$ and $X = \mathbb R^2\setminus \{0\}$.
        Consider the compactification $\beta^GX$ of $X$ given by the spectrum of
        \[C(\beta^G X) = \set{f\in C_b(\mathbb R^2)}{\text{$\norm{A\cdot f - f}_\infty \to 0$ if $A\to 1$}},\]
        where $(A\cdot f)(x) = f(A^{-1}x)$ for $A\in G$, $f\in C_b(\mathbb R^2)$ and $x\in X$.
        Note that $\beta^GX$ is the universal compactification of $X$ on which the action of $G$ extends, i.e. every 
        continuous $G$-equivariant map $X\to Y$ to any compact space $Y$ with continuous action $G\acts Y$ extends 
        uniquely to a continuous $G$-equivariant map $\beta^GX \to Y$. 
        Also, note that for every $f\in C(\beta^GX)$, we have $\lim_{x\to \infty}|f(x+y) - f(x)| = 0$ uniformly on 
        compact sets for $y\in \mathbb R^2$.
        
        We claim that the action $G\acts \beta^GX$ is amenable. To prove this claim, consider the action of $G$ on
        the projective real line $\widehat {\mathbb R} = \mathbb R\cup\{\infty\}$ by linear fractional transformations, i.e.
        \[\begin{pmatrix}
        a & b\\
        c & d
        \end{pmatrix} \cdot t = \begin{cases}
        \frac{at+b}{ct+d} & \text{if $ct+d\neq 0$,}\\
        \infty            & \text{otherwise,}
        \end{cases}\mathand 
        \begin{pmatrix}
        a & b\\
        c & d
        \end{pmatrix} \cdot \infty = 
        \begin{cases}
        \frac{a}{c} & \text{if $c\neq 0$,}\\
        \infty      & \text{otherwise.}
        \end{cases}\]
        The stabilizer of the point $\infty \in \widehat{\mathbb R}$ is the subgroup $P\subseteq G$ of upper 
        triangular matrices. Since $P$ is solvable (and hence amenable) and 
        $G / P\to \widehat{\mathbb R}: A\mapsto A\cdot \infty$ is a homeomorphism, it follows from 
        \cite[Example~2.2.18]{Anantharaman-Delaroche2000amenable} that $G\acts \widehat{\mathbb R}\cong G / P$ is 
        amenable. Consider the map $\varphi: X \to \widehat {\mathbb R}$ defined by $\varphi(m,n) = m/n$. Since this 
        map is continuous and $G$-equivariant, it induces a $G$-equivariant extension 
        $\beta^G\varphi: \beta^GX \to \widehat{\mathbb R}$. This proves the amenability of $G\acts \beta^GX$.
        
        Now, we use \cref{bi-exact.semi-direct product} to finish the proof. Note that $G$ is in class 
        $\mathcal S$ by 
        \cite[Proof of Théorème~4.4]{Skandalis1988} (see also \cite[Proposition~7.1]{Brothier2017rigidity}). Let
        $\eta_n: \beta^GX \to \Prob(G)$ be a sequence as in the definition of an amenable action.
        By using \cref{amenable action.equivalent}, we can assume that each $\eta_n$ is $\norm{.}$-continuous. 
        We define $\tilde \eta_n: \mathbb R^2\to \Prob(G)$ by taking $\tilde \eta_n(x) = \eta_n(x)$ if $x\neq 0$ and
        $\tilde \eta_n(0)\in \Prob(G)$ arbitrary. The sequence of maps $\tilde \eta_n$ now satisfies
        \[\lim_{n\to \infty} \sup_{x\in \mathbb R^2\setminus \{0\}}\sup_{A\in K}\norm{\tilde \eta_n(Ax) - A\cdot\tilde \eta_n(x)} = 0\]
        for every compact $K\subseteq G$. Using continuity of the maps $\eta_n$, we also have
        \[\lim_{x\to \infty} \norm{\tilde \eta_n(y+x+y') - \tilde \eta_n(x)} = 0\]
        uniformly on compact sets for $y,y'\in \mathbb R^2$. %Since if x_n\to x\in \partial^GX, then also y+x_n+y'\to x (and using that \partial^GX is compact, we can get that on compact sets)
        The map as in \cref{bi-exact.semi-direct product} can now
        be constructed by using \cref{prop(S).equivalent.net.technical} on the group 
        $\mathbb R^2\rtimes \SL_2(\mathbb R)$ and the spaces $\mathbb R^2$ and $G$ with the natural actions.
    \end{proof}
    
    We will now prove \cref{bi-exact.wreath product.introduction}. The suitable notion of wreath products
    for locally compact groups was introduced by \citeauthor{Cornulier2017wreath} in \cite{Cornulier2017wreath}.
    Let $B$ and $H$ be lcsc groups, $X$ a countable set with continuous action $H\acts X$ and $A\subseteq B$ a compact 
    open subgroup. The 
    \imph{semi-restricted power} $B^{X,A}$ is defined by
    \[B^{X,A} = \set{(b_x)_{x\in X}\in B^X}{\text{$b_x\in A$ for all but finitely many $x\in X$}}.\]
    It is a lcsc space when equipped with the topology generated by the open sets $\prod_{x\in X} C_x$ 
    where $C_x\subseteq B$ is open for every $x\in X$ and $C_x = A$ for all but finitely many $x\in X$. For 
    $b\in B^{X,A}$, we denote $\supp_A b = \set{x\in X}{b(x)\notin A}$.
    
    Denote by $\alpha$ the action of $H$ on $B^{X,A}$ by translation, i.e. $\alpha_h(b)(x) = b(h^{-1}x)$ for 
    $b\in B^{X,A}$, $h\in H$ and $x\in X$. It is easy to see that this action is continuous.
    The \imph[wreath product,semi-restricted wreath product]{(semi-restricted) wreath product} $B\wr_X^A$ is now 
    defined as
    \begin{align}
        B\wr_X^A H = B^{X,A}\rtimes_\alpha H\label{eq.wreath.definition}
    \end{align}
    equipped with the product topology. By \cite[Proposition~2.4]{Cornulier2017wreath} it is a lcsc group. 
    \cref{bi-exact.wreath product.introduction} is now a consequence of the following theorem.
    
    \begin{theorem}\label{bi-exact.wreath product}
        Let $A$, $B$, $X$ and $H$ be as above. Suppose that $B$ is non-compact and $|X|\ge 2$. Then, $B\wr_X^A H$ 
        belongs to class $\mathcal S$ if and only if $B$ is amenable, the 
        stabilizer $\Stab_H(x)$ of every point $x\in X$ is amenable and $H$ belongs to class $\mathcal S$.
    \end{theorem}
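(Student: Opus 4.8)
The plan is to prove the two implications separately, reducing the nontrivial (``if'') direction to \cref{bi-exact.semi-direct product}. Write $G = B\wr_X^A H = B^{X,A}\rtimes_\alpha H$, and for $b\in B^{X,A}$ let $\supp_A b\subseteq X$ denote its finite $A$-support.

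For the ``only if'' direction I would only use that class $\mathcal S$ passes to closed subgroups, together with the easy fact that $\Gamma\times\Lambda$ is never in class $\mathcal S$ when $\Gamma$ is nonamenable and $\Lambda$ is non-compact. For $x\in X$ write $B_x\cong B$ for the closed subgroup of those $b$ vanishing off $x$. Since $\alpha_h(b)(y) = b(h^{-1}y)$ equals $b(y)$ for every $y$ as soon as $h\in\Stab_H(x)$ and $b\in B_x$, the subgroups $B_x$ and $\Stab_H(x)$ (the latter embedded as $\{e\}\times\Stab_H(x)$) commute, so $B_x\times\Stab_H(x)\cong B\times\Stab_H(x)$ is a closed subgroup of $G$; as $B$ is non-compact this forces $\Stab_H(x)$ amenable. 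Using $|X|\ge 2$, the closed subgroup $B_x\times B_y\cong B\times B$ (for $x\neq y$) likewise forces $B$ amenable, and $\{e\}\times H\cong H$ is closed in $G$, so $H\in\mathcal S$.

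For the ``if'' direction, assume $B$ amenable, every $\Stab_H(x)$ amenable, and $H\in\mathcal S$. First, $B^{X,A}$ is amenable, being a directed union of its open subgroups $B^F\times A^{X\setminus F}$ ($F\subseteq X$ finite), each amenable as a finite product of copies of $B$ with the compact group $\prod_{X\setminus F}A$, and a locally compact group that is a directed union of open amenable subgroups is amenable. By \cref{bi-exact.semi-direct product} it is then enough to construct a Borel map $\mu\colon B^{X,A}\to\Prob(H)$ with $\norm{\mu(\alpha_h(b)) - h\cdot\mu(b)}\to0$ and $\norm{\mu(aba')-\mu(b)}\to0$ as $b\to\infty$, uniformly on compact sets for $h\in H$ and $a,a'\in B^{X,A}$. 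The idea is to send $b$ to a probability measure ``spread over $\supp_A b$'', read off through an almost $H$-equivariant assignment of points of $X$ to measures on $H$. Since $X$ is discrete, every orbit map $H\to Hx$ is locally constant, hence every $\Stab_H(x)$ is open; using a Borel section $s\colon X\to H$ for the orbit maps and the amenability of the (open) stabilizers, one builds a family $\theta_m\colon X\to\Prob(H)$ and a proper function $\abs{\cdot}_X$ on $X$ such that, for every $\eps>0$, every $R$ and every compact $D\subseteq H$, one has $\norm{\theta_m(hz)-h\cdot\theta_m(z)}<\eps$ whenever $h\in D$, $\abs{z}_X\le R$ and $m$ is large enough; concretely $\theta_m(z)=s(z)\cdot\zeta^i_m$ for $z\in Hx_i$, where $\zeta^i_m\in\Prob(\Stab_H(x_i))$ is an increasingly invariant sequence and $\abs{z}_X$ measures $s(z)$ in a fixed proper length on $H$ (chosen so $s(z)$ is of minimal length in its coset), using that $s(hz)^{-1}hs(z)\in\Stab_H(x_i)$ has length controlled by $\abs{z}_X$ and $D$. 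With this in hand I would fix an increasing compact exhaustion $(V_n)_n$ of $B$ with $A^{-1}AA^{-1}\subseteq V_1$, set $m(b)=\max\bigl(\max_{y\in\supp_A b}\abs{y}_X,\ \max_{y\in\supp_A b}d_B(b(y),A),\ \abs{\supp_A b}\bigr)$ -- a proper function on $B^{X,A}$ that changes by at most a constant under $b\mapsto\alpha_h(b)$ and $b\mapsto aba'$ -- and take $\mu_n(b)$ to be the average of $\theta_{m(b)}(y)$ over $y\in\supp_A b$, weighted by $\min(n,d_B(b(y),A))$. Finally I would obtain $\mu$ from the sequence $(\mu_n)_n$ via \cref{prop(S).equivalent.net.technical}, applied to the evident action of $(B^{X,A}\times B^{X,A})\rtimes H$ on $B^{X,A}$ and on $H$; its windowing over $n$ is what makes the next point work.

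I expect the real work, and the only genuine obstacle, to be the $aba'$-condition for configurations $b$ whose $A$-support stays bounded while $b\to\infty$ (a coordinate value escaping in $B$): there $\mu_n(b)$ is essentially locally constant in $b$, yet $\mu_n(aba')$ can acquire extra support from $a,a'$ and, at the few coordinates where $b(y)$ is only just outside $A$, the coordinate $a(y)b(y)a'(y)$ may move deeper into or further out of $A$. However, for $a,a'$ in a fixed compact set the weight changes $\abs{\min(n,d_B(b(y),A))-\min(n,d_B(a(y)b(y)a'(y),A))}$ are bounded off a fixed finite subset of $X$ and, at each coordinate, are nonzero for only $O(1)$ consecutive values of $n$; averaging over $n$ as in \cref{prop(S).equivalent.net.technical} therefore smears the discrepancy out to $0$, as does the bounded shift of $m(b)$ under $b\mapsto aba'$. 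The $\alpha_h$-condition is then comparatively routine: $\supp_A(\alpha_h b)=h\cdot\supp_A b$, the weights are unchanged, and since $m(b)\to\infty$ while every $y\in\supp_A b$ satisfies $\abs{y}_X\le m(b)$, the almost-equivariance of the family $(\theta_m)_m$ forces $\norm{\mu_n(\alpha_h b)-h\cdot\mu_n(b)}\to0$ uniformly in $b$. With both conditions established, \cref{bi-exact.semi-direct product} gives $G\in\mathcal S$, which together with the ``only if'' part proves the theorem.
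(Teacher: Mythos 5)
Your ``only if'' direction and your overall architecture for the ``if'' direction (reduction to \cref{bi-exact.semi-direct product}, spreading mass over $\supp_A b$ through almost-equivariant assignments $X\to\Prob(H)$ built from the amenable stabilizers, and a properness argument for the $aba'$-condition) match the paper. But two steps in your construction do not close as written.

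First, the $\alpha_h$-condition is not ``comparatively routine'': your almost-equivariance of $(\theta_m)_m$ is quantified as ``for every $R$ and every compact $D$, for all $m$ large enough (depending on $R$ and $D$)'', and you then evaluate $\theta_{m(b)}$ at points $y\in\supp_A b$ whose radius $|y|_X$ is bounded only by $m(b)$ itself. You would need $m(b)\ge m_0(m(b))$, which nothing guarantees: a point $y$ far out in its orbit with $b(y)$ just outside $A$ gives $m(b)\approx|y|_X$, and the F\o lner-type sequence $\zeta^i_m$ in $\Stab_H(x_i)$ has no controlled rate of invariance. This is exactly the difficulty the paper isolates and solves with \cref{amenable action.uniform on space}, which upgrades almost-invariant maps for the amenable action $H\acts H/H_i$ to maps that are $\eps$-equivariant uniformly over the \emph{entire} orbit for each fixed compact $D\subseteq H$; that patching lemma (or a careful diagonal choice of the $\zeta^i_m$, which you do not make) is the missing ingredient. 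Relatedly, when there are infinitely many $H$-orbits your $|\cdot|_X$ is not proper (every basepoint $x_i$ has $|x_i|_X=0$), which is why the paper treats all but finitely many orbits by a separate construction that is uniform over the whole orbit (Step~2 of its proof).

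Second, for the $aba'$-condition you lean on ``averaging over $n$ as in \cref{prop(S).equivalent.net.technical}'' to absorb the jump caused by the bounded shift of $m(b)$ under $b\mapsto aba'$. That lemma cannot do this: its hypothesis requires $\limsup_{b\to\infty}\norm{\mu_n(aba')-\mu_n(b)}$ to tend to $0$ as $n\to\infty$, so each $\mu_n$ must already have small discrepancy far out; the window-averaging inside the lemma is over the index $n$, while the offending quantity $\norm{\theta_{m(aba')}(y)-\theta_{m(b)}(y)}$ sits inside every $\mu_n$ identically and is $O(1)$ whenever $m(b)$ crosses an integer threshold (take $|\supp_A b|=1$ with the single coordinate escaping in $B$). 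The paper's mechanism is different and avoids the issue: on each orbit it uses the unnormalized $\ell^1(X_i)$-valued map $\zeta_i(b)(x)=g\big(b(x)\big)+f(x)$ built from proper length functions, for which $\norm{\zeta_i(aba')-\zeta_i(b)}_1$ stays bounded in terms of $a,a'$ while $\norm{\zeta_i(b)}_1\to\infty$ properly, and only afterwards passes to $\Prob(H)$ via \cref{bi-exact.amenable subgroup.technical}. You would need either to adopt that ratio argument or to make $\theta_m$ explicitly slowly varying in $m$; as written the proof has a gap at both points.
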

    Note that if $|X|=1$, then $B\wr_X^A H\cong B\times H$ belongs to class $\mathcal S$ if and only if both 
    factors are amenable, or one of $B$ and $H$ belongs to class $\mathcal S$ and the other is compact. 
    If $B$ is 
    compact, then by
    \cite[Lemma~7.2]{Brothier2017rigidity}, we have that $B\wr_X^A H$ belongs to class $\mathcal S$ if and only 
    if $H$ does.
    \begin{proof}[Proof of \cref{bi-exact.wreath product}]
        Suppose first that $B\wr_X^A H$ belongs to class $\mathcal S$. It follows that the subgroups $H$ and 
        $B\times B$ do. Hence, $B$ must be amenable.
        For every point $x_0\in X$ the subgroup 
        \[B\times \Stab_H(x_0)\cong \set{(b,h)\in B\wr_X^A G}{\text{$b(x) = e$ if $x\neq x_0$}}\]
        belongs to class $\mathcal S$. Since $B$ is non-compact, it follows that $\Stab_H(x_0)$ is amenable.
        
        Conversely, suppose that $H$ belongs to class $\mathcal S$ and that $B$ and all stabilizers 
        $\Stab_H(x)$ are 
        amenable. We prove that $B\wr_X^A H$ belongs to class $\mathcal S$. 
        Denote by $X = \bigcup_{i\in I} X_i$ the partition of $X$ 
        into the orbits of $H\acts X$ and fix $x_i\in X_i$ for all $i\in I$. Write $B_i = B^{X_i,A}$ and $H_i = \Stab_H(x_i)$.
        
        \paragraph{Step 1. Each $B\wr_{X_i}^A H$ belongs to class $\mathcal S$.} 
        Fix $i\in I$. To prove this step, we proceed along 
        the lines of \cite[Corollary~15.3.6]{Brown2008}. We claim that it suffices to prove the existence of a 
        continuous map 
        $\zeta_i: B_i \to M(H / H_i)^+\cong \ell_1(X_i)^+$ satisfying
        \begin{align}
            \lim_{b\to \infty} \frac{\norm{h\cdot \zeta_i(b) - \zeta_i\big(\alpha_h(b)\big)}_1}{\norm{\zeta_i(b)}_1} = 0\mathand
            \lim_{b\to \infty} \frac{\norm{\zeta_i(aba') - \zeta_i(b)}_1}{\norm{\zeta_i(b)}_1} = 0
            \label{eq:bi-exact.wreath product.zeta_i.bi-exact}
        \end{align}
        uniformly on compact sets for $h\in H$ and $a,a'\in B_i$. Indeed, suppose that $\zeta_i$ is such a map. Let 
        $H\acts^\beta B_i\times B_i$ be the diagonal action.
        We 
        first normalize $\zeta_i$ and then apply
        \cref{bi-exact.amenable subgroup.technical} to this normalized map with the
        groups $G = (B_i\times B_i)\rtimes_0 H$, $H$ and $K = H_i$, the space $X = B_i$, the morphism $\pi: G\to H$ 
        given 
        by 
        $\pi(a,a',h) = h$, and the action $G\acts X$ given by $(a,a',h)\cdot b = a\alpha_h(b)(a')^{-1}$ for 
        $a,a',b\in B_i$ 
        and $h\in H$. This yields a map $\tilde \zeta_i: B_i \to \Prob(H)$ that satisfies the conditions of 
        \cref{bi-exact.semi-direct product}.
        
        By \cite{Struble1974metrics} every lcsc group $G$ admits a continuous proper length function, i.e. a continuous 
        function $\ell: G\to \mathbb R^+$ such that $\ell(gh)\le \ell(g) + \ell(h)$ and $\ell(g) = \ell(g^{-1})$ for 
        all $g,h\in G$ and such that all the sets $\set{g\in G}{\ell(g)\le M}$ for $M > 0$ are compact. Fix such 
        continuous, proper length functions $\ell_B: B\to \mathbb R^+$ and $\ell_H: H\to \mathbb R^+$. Define the 
        function
        \[f: X_i\to \mathbb R^+: x\mapsto \inf_{\substack{h\in H\\hx_i = x}} \ell_H(h).\]
        Note that for every $M > 0$ the set $\set{x\in X_i}{f(x)\le M}$ is finite and that $f(hx)\le \ell_H(h) + f(x)$.
        Similarly, we define
        \[g: B\to \mathbb R^+: b\mapsto \inf_{a,a'\in A} \ell_B(aba'),\]
        and note that for every $M > 0$ the set $\set{b\in B}{g(b)\le M}$ is compact and that 
        $g(bb')\le g(b) + g(b') + N$, where $N = \sup_{a\in A} \ell_B(a)$. Also note that, by compactness of $A$, 
        the map $g$ is continuous.
        
        Define 
        $\zeta_i: B_i \to \ell^1(X_i)^+$ by
        \[\zeta_i(b)(x) = \begin{cases}
                            g\big(b(x)\big) + f(x) &\text{if $x\in \supp_A(b)$,}\\
                            0                   &\text{otherwise}
                        \end{cases}\]
        for $b\in B_i$ and $x\in X_i$.
        
        We prove that $\zeta_i$ satisfies \eqref{eq:bi-exact.wreath product.zeta_i.bi-exact}. Fix $h\in H$ and 
        $a,a',b\in B_i$. Denote $b' = aba'$, $S = \supp_A b$, $S' = \supp_A b'$ and $T = \supp_A a\cup \supp_A a'$.
        We have
        \[\norm{h\cdot \zeta(b) - \zeta\big(\alpha_h(b)\big)}_1
            = \sum_{x\in hS}|f(h^{-1}x) - f(x)|
            \le |S|\: \ell_H(h)\]
        and
        \begin{align*}
        \norm{\zeta(b') - \zeta(b)}_1 &= \sum_{x\in T} |\zeta(b')(x) - \zeta(b)(x)|\\
                                      &= \sum_{x\in T\cap S\cap S'} \big|g\big(b'(x)\big) - g\big(b(x)\big)\big| 
                                         + \sum_{x\in (T\cap S)\setminus S'} |g\big(b(x)\big) + f(x)|
                                         + \sum_{x\in (T\cap S')\setminus S} |g\big(b'(x)\big) + f(x)|\\
                                      &\le \sum_{x\in T} \big|g\big(b'(x)\big) - g\big(b(x)\big)\big|  + \sum_{x\in T\cap (S\triangle S')} f(x)\\
                                      &\le \sum_{x\in T} \bigg(g\big(a'(x)\big) + g\big(a(x)\big) + 2N\bigg) 
                                           + \sum_{x\in T\cap (S\triangle S')}f(x)\\
                                      &\le \norm{\zeta(a)}_1 + \norm{\zeta(a')}_1 + 2N\;|T|
        \end{align*}
        where we used in the third step that $g(b) = 0$ whenever $b\in A$.
        
        So, it suffices to prove that 
        \[\lim_{b\to \infty} \norm{\zeta_i(b)}_1 = +\infty\mathand 
          \lim_{b\to \infty} \frac{|\supp_A b|}{\norm{\zeta_i(b)}_1} = 0.\]
        To prove the first, suppose that $\norm{\zeta(b)}_1\le M$ for some $M > 0$. Then, 
        $f(x)\le M$ and $g\big(b(x)\big)\le M$ for every $x\in\supp_A(b)$. Hence,
        \[b\in C = \prod_{x\in X_i} C_x\]
        where $C_x = \set{b\in B}{g(b)\le M}$ for $x\in F = \set{x\in X}{f(x)\le M}$ and $C_x = A$ otherwise. 
        Since $F$ is finite and each $C_x$ is compact, it follows that $C$ is compact, which in turn implies the claim.
        
        To prove that ${|\supp_A b|} / {\norm{\zeta_i(b)}_1}\to 0$ if $b\to \infty$. Suppose that 
        ${|\supp_A b|} / {\norm{\zeta_i(b)}_1} \ge \delta$ for some $b\in B$ and $\delta > 0$. Denote 
        $D = \set{x\in X_i}{f(x)\le 2/\delta}$. Then,
        \[\frac{2}{\delta} \big(|\supp_A b| - |D|\big) \le \frac{2}{\delta} |\supp_A b\setminus D| \le \norm{\zeta_i(b)}_1 \le \frac{1}\delta |\supp_A b|\]
        and thus $|\supp_A b|\le 2|D|$. It follows that $\norm{\zeta_i(b)}_1 \le \frac{2}\delta |D|$. But, by the 
        previous, the set
        \[\set{b\in B}{\norm{\zeta(b)}_1 \le \frac 2\delta |D|}\]
        is compact and hence so is $\set{b\in B}{{|\supp_A b|} / {\norm{\zeta_i(b)}_1}\ge \delta}$.
        
        \paragraph{Step 2. Construction of maps $\xi_i: B_i\to \Prob(H)$ satisfying \eqref{eq:bi-exact.wreath product.xi_i} below.} 
        Fix $i\in I$, $\eps > 0$ and a compact $K\subseteq H$. In this step, we construct a Borel 
        map $\xi_i: B_i\to \Prob(H)$
        such that
        \begin{align}
            \norm{\xi_i\big(\alpha_h(b)\big) - h\cdot\xi_i(b)}\le \eps\mathand \xi_i(aba') = \xi_i(b)\label{eq:bi-exact.wreath product.xi_i}
        \end{align}
        for all $b\in B_i\setminus A^{X_i}$, all $h\in K$ and all $a,a'\in A^{X_i}$. Note that the difference with
        the previous step is that we want the map $\xi_i$ to satisfy \eqref{eq:bi-exact.wreath product.xi_i}
        for all $b\in B_i\setminus A^{X_i}$, instead of $b\in B_i\setminus L$ for $L$ some (possibly large) compact set.
        
        Since $H_i$ is amenable, the action $H\acts H / H_i$ is amenable. Indeed, let $(\nu_n)_n$ be a 
        sequence in 
        $\Prob(H_i)$ such that $\norm{h\cdot \nu_n - \nu_n}\to 0$ uniformly on compact sets for $h\in H_i$ when 
        $n\to \infty$. Fix a 
        cross section $\sigma: H / H_i\to H$ for the quotient map $p: H\to H / H_i$. 
        Then, the sequence of maps $\eta_n: H / H_i\to \Prob(H)$ defined by
        \[\eta_n(hH) = \sigma(hH)\cdot \nu_n\]
        satisfies
        \begin{align*}
            \lim_{n\to \infty} \norm{h\cdot \eta_n(h'H) - \eta_n(hh'H)} = \lim_{n\to \infty}\norm{\sigma(hh')^{-1}h\sigma(h'H)\cdot \nu_n - \nu_n} = 0
        \end{align*}
        uniformly on compact sets for $h\in H$ and $h'H\in H/H_i$. 
        
        By \cref{amenable action.uniform on space}, it follows that we can take a sequence of maps such that the 
        convergence holds uniformly on the whole of $H / H_i$. Hence, 
        identifying $X_i \cong H / H_i$, we find a map
        $\mu: X_i \to \Prob(H)$ such that
        \[\norm{h\cdot \mu(x) - \mu(hx)} < \eps\]
        for every $h\in K$ and every $x\in X_i$. Now, define $\xi_i: B_i\to \Prob(H)$ by
        \[\xi_i(b) = \frac{1}{|\supp_A b|}\sum_{x\in \supp_A b} \mu(x)\]
        for $b\in B_i\setminus A^{X_i}$. For $b\in B_i$, set $\xi_i(b) = \mu_0$, where $\mu_0\in \Prob(H)$ is arbitrary.
        One easily checks that $\xi_i$ satisfies \eqref{eq:bi-exact.wreath product.xi_i}.
        
        \paragraph{Step 3. $B\wr_X^A H$ is bi-exact.} 
        Take $\eps > 0$, a compact $C\subseteq B^{X,A}$ 
        and a compact $K\subseteq H$. By \cref{prop(S).equivalent.net.technical} and 
        \cref{bi-exact.semi-direct product}, it suffices to prove that there exists a compact $D\subseteq B^{X,A}$ and 
        a Borel map $\zeta: B^{X,A}\to \Prob(H)$ such that
        \begin{align}
            \norm{h\cdot \zeta(b) - \zeta\big(\alpha_h(b)\big)} \le \eps \mathand \norm{\zeta(aba') - \zeta(b)}\le \eps
            \label{eq:bi-exact.wreath product.zeta}
        \end{align}
        for all $h\in K$, $a,a'\in C$ and $b\in B^{X,A}\setminus D$. 
        
        By definition of the topology on the semi-restricted 
        product $B^{X,A}$, we can take compact sets $C_i\subseteq B_i$ for $i\in I$ such that
        \[C\subseteq \prod_{i\in I} C_i\]
        and such that $C_i = A^{X_i}$ for all but finitely many $i\in I$.
        Take $i_1,\dots, i_n\in I$ such that $C_i = A^{X_i}$ whenever $i\neq i_1,\dots, i_n$. 
        
        For $i = i_1, \dots, i_n$, the fact that $B\wr_{X_i}^A H$ belongs to class $\mathcal S$, allows us to 
        take a compact $D_i\subseteq B_i$ and a Borel map $\zeta_i: B_i\to \Prob(H)$ such that
        \[\norm{h\cdot \zeta_i(b) - \zeta_i\big(\alpha_h(b)\big)} 
          \le \eps \mathand \norm{\zeta_i(aba') - \zeta_i(b)}\le \eps\]
        for $h\in K$, $a,a'\in C_i$ and $b\in B_i\setminus D_i$. By enlarging $D_i$, we can assume that 
        $A^{X_i}\subseteq D_i$ and $C_i^{-1}A^{X_i}C_i^{-1}\subseteq D_i$.
        For $i\neq i_1,\dots,i_n$, we take for $\zeta_i: B_i \to \Prob(H)$ the map $\xi_i$ from step 2 and set 
        $D_i = A^{X_i}$. 
        
        For $b\in B^{X,A}$ and $i\in I$, we denote by $b_i\in B^{X_i,A}$ the restriction of $b$ to $X_i$. We also
        denote $I_b = \set{i\in I}{b_i\notin A^{X_i}}$. Define $\zeta: B^{X,A}\to \Prob(H)$
        by
        \[\zeta_i(b) = \frac{1}{|I_b|} \sum_{i\in I_b} \zeta_i(b_i) \]
        for $b\in B^{X,A}\setminus A^X$ and $\zeta_i(b) = \delta_e$ for $b\in A^X$. One easily checks that 
        \eqref{eq:bi-exact.wreath product.zeta} holds for $D = \prod_{i\in I} D_i$, since $I_b = I_{aba'}$ for 
        $b\in B^{X,A}\setminus D$ and $a,a'\in C$.
    \end{proof}
    
    For completeness, we also include a proof of the following fact mentioned in the introduction. It is a locally
    compact version of a result mentioned in \cite{Ozawa2006} in the countable setting.
    \begin{proposition}\label{bi-exact.inner amenable}
        A lcsc group $G$ that is inner amenable at infinity belongs to class $\mathcal S$ if and only if
        $G$ is amenable.
    \end{proposition}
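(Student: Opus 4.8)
The backward implication needs nothing new: every amenable lcsc group already lies in class $\mathcal S$ (see \cite[Section~7]{Brothier2017rigidity}, or observe that an amenable group acts amenably on every compact space, in particular on $\nu^uG$, and invoke \cref{bi-exact.equivalent}). This does not use the inner-amenability-at-infinity hypothesis. So the content is the forward implication, and the plan is: assuming $G$ is in class $\mathcal S$ and inner amenable at infinity, build a left-invariant mean on $L^\infty(G)$, whence $G$ is amenable.

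First I would use \cref{prop(S).equivalent} to replace the witnessing map by one of the form $\eta\colon G\to\mathcal S(G)$, i.e. with values $\eta(k)$ absolutely continuous with respect to $\lambda_G$, still satisfying $\lim_{k\to\infty}\norm{\eta(gkh)-g\cdot\eta(k)}=0$ uniformly on compact sets for $g,h\in G$. Since the $\eta(k)$ are absolutely continuous, $(\Phi f)(k)=\int_G f\,d\eta(k)$ is well defined for every $f\in L^\infty(G)$, and $\norm{.}_1$-continuity of $\eta$ makes $\Phi\colon L^\infty(G)\to C_b(G)\subseteq L^\infty(G)$ a unital positive linear map. Let $m$ be the given conjugation-invariant mean on $L^\infty(G)$ with $m(\mathbf 1_E)=0$ for every compact $E\subseteq G$. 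Positivity then forces $m(w)=0$ for every $w\in L^\infty(G)$ supported on a compact set, and hence $m(u)=m(v)$ whenever $u,v\in L^\infty(G)$ satisfy $u-v\to 0$ at infinity (for every $\eps>0$ there is a compact $L$ with $\abs{u-v}\le\eps$ off $L$): split $u-v$ over $L$ and its complement and let $\eps\to 0$.

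Then I would put $M=m\circ\Phi$, a mean on $L^\infty(G)$, and check left-invariance. Fix $g\in G$ and $f\in L^\infty(G)$. Unwinding definitions, $(\Phi(\lambda_g f))(k)=\int_G f(g^{-1}x)\,d\eta(k)(x)=\int_G f\,d(g^{-1}\cdot\eta(k))$, while $(\Phi f)(g^{-1}kg)=\int_G f\,d\eta(g^{-1}kg)$. Applying the defining estimate of class $\mathcal S$ with the parameters $g^{-1}$ and $g$ in place of $g$ and $h$ gives $\norm{\eta(g^{-1}kg)-g^{-1}\cdot\eta(k)}\to 0$ as $k\to\infty$, so the bounded functions $k\mapsto(\Phi(\lambda_g f))(k)$ and $k\mapsto(\Phi f)(g^{-1}kg)$ differ by a function tending to $0$ at infinity. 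By the previous paragraph their $m$-values coincide, and since $k\mapsto(\Phi f)(g^{-1}kg)$ is $\Phi f$ composed with the inner automorphism $k\mapsto g^{-1}kg$, conjugation-invariance of $m$ yields $M(\lambda_g f)=m(\Phi(\lambda_g f))=m(\Phi f)=M(f)$. As $g$ was arbitrary, $M$ is a left-invariant mean on $L^\infty(G)$, so $G$ is amenable.

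The main — essentially the only — delicate point is the matching used in the last step: plain inner amenability would only supply a conjugation-invariant mean, which is not enough, whereas the ``at infinity'' hypothesis is exactly what lets us promote the merely asymptotic relation $\eta(g^{-1}kg)\approx g^{-1}\cdot\eta(k)$ provided by class $\mathcal S$ to an honest identity after applying $m$. One should also make sure that $\eta$ can genuinely be taken $\mathcal S(G)$-valued, so that $\Phi$ is defined on all of $L^\infty(G)$ and $M$ really is a mean there; this is where \cref{prop(S).equivalent} is needed.
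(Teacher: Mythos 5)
Your argument is correct and is essentially the paper's own proof: both reduce the forward direction to showing that the map $f\mapsto\big(k\mapsto\int_G f\,d\eta(k)\big)$ intertwines left translation with conjugation up to functions vanishing at infinity, and then compose with the conjugation-invariant mean that kills such functions. The only (cosmetic) difference is that the paper works with $C_b(G)$ and $\Prob(G)$-valued $\eta$, so it never needs \cref{prop(S).equivalent} to make the measures absolutely continuous, whereas your $L^\infty(G)$ formulation does.
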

    \begin{proof}
        If $G$ is amenable, the result is immediate. Conversely, suppose that $G$ is in class $\mathcal S$. Let
        $\eta: G\to \Prob(G)$ be a map as in the definition. Define the map $\eta_*: C_b(G)\to C_b(G)$ by
        \[(\eta_*f)(g) = \int_G f\dif\eta(g).\]
        It is easy to prove that $\eta_*(\lambda_g f) - \lambda_g \eta_*(f)\in C_0(G)$ and 
        $\eta_*(f) - \rho_g\eta_*(f)\in C_0(G)$ for all $f\in C_b(G)$ and $g\in G$.
        
        Since $G$ is inner amenable at infinity, we can take a state $m: C_b(G)\to \mathbb C$ that is invariant under conjugation and such that $m(f) = 0$ for all $f\in C_0(G)$. Then,
        \[m\circ \eta_*(\lambda_g f) = m\big(\lambda_g\eta_*(f)\big) = m\big(\rho_g\lambda_g \eta_*(f)\big) = m\circ \eta_*(f).\]
        Hence, $m\circ \eta_*$ is a left-invariant mean on $C_b(G)$.
    \end{proof}
    
    \section{Class \texorpdfstring{$\mathcal S$}{𝓢} is closed under measure equivalence}
    In this section, we prove \cref{bi-exact.measure equivalent}. As mentioned in the introduction, exactness is 
    preserved under measure equivalence. So, it suffices to prove that property (S) (i.e. the existence of a map 
    $\eta: G\to \Prob(G)$ satisfying \eqref{eq:prop (S)}) is a measure equivalence invariant. In order to prove that, 
    we will use the characterization of 
    measure equivalence in terms of cross section equivalence relations \cite[Theorem~A]{KKR2018non-unimodular}
    and introduce a notion of property (S) for these relations.
    
    Recall that a countable, Borel equivalence relation $\mathcal R$ on a standard probability space 
    $(X,\mu)$ is an
    equivalence relation on $X$ such that $\mathcal R\subseteq X\times X$ is a Borel subset and such
    that all orbits are countable. We say that $\mathcal R$ is non-singular for the measure $\mu$ if $\mu(E) = 0$
    implies that $\mu([E]_{\mathcal R}) = 0$ for all measurable $E\subseteq X$. Here, 
    $[E]_{\mathcal R} = \set{x\in X}{\exists y\in E: x\sim_{\mathcal R} y}$.
    We say that $\mathcal R$ is \imph{ergodic} if $E = [E]_{\mathcal R}$ implies that $\mu(E) = 0$ or $\mu(E)=1$.
    We denote $\mathcal R^{(2)} = \set{(x,y,z)}{x\sim_{\mathcal R}y\sim_{\mathcal R}z}$. Note that 
    $\mathcal R^{(2)}\subseteq X\times X\times X$ is Borel.
    
    A Borel subset $\mathcal W\subseteq \mathcal R$ is called \emph{bounded} if the number of elements in its
    sections is bounded, i.e. if there exists a $C > 0$ such that
    \[|{_x\mathcal W}| = |\set{y\in X}{(x,y)\in \mathcal W}|<C \mathand
      |{\mathcal W_y}| = |\set{x\in X}{(x,y)\in \mathcal W}| < C\]
    for a.e. $x,y\in X$. We say that $\mathcal W$ is \imph{locally bounded} if for every $\eps > 0$, there exists a 
    Borel
    subset $E\subseteq X$ with $\mu(X\setminus E)\le \eps$ such that 
    $\mathcal W\cap (E\times E)$ is bounded.
    
    The \imph{full group} $[\mathcal R]$ is the group of all Borel automorphisms $\varphi: X\to X$, identified up to almost everywhere equality, such that
    $\grph \varphi = \{(\varphi(x),x)\}_{x\in X}$ is contained in $\mathcal R$. The \imph{full pseudo group} 
    $[[\mathcal R]]$ is the set of all partial Borel isomorphisms $\varphi: A\to B$ for Borel sets $A,B\subseteq X$
    whose graph is contained in $\mathcal R$. Again, these partial isomorphisms are identified up to almost everywhere
    equality. Every bounded Borel subset $\mathcal W\subseteq \mathcal R$ can be written as a finite union of graphs
    of elements in $[[\mathcal R]]$. For more information about countable equivalence relations, see for instance 
    \cite{Feldman1977I}.
    
    Let $G$ be a lcsc group and $G\acts (X,\mu)$ a probability measure preserving (pmp) action. We say that the action 
    $G\acts (X,\mu)$ is \imph{essentially free} if the set
    \[\set{x\in X}{\exists g\in G: gx = x}\]
    is a null set. Note that this set is Borel by \cite[Lemma~10]{Meesschaert2013}.
    
    The notion of a cross section equivalence relation was originally introduced by \citeauthor{Forrest1974} in 
    \cite{Forrest1974}. A more recent, self-contained treatment for unimodular groups can be found in 
    \cite{Kyed2015}.
    Given an essentially free pmp action $G\acts (X,\mu)$ on a 
    standard probability space, a 
    \imph{cross section} is a Borel subset $X_1\subseteq X$ with the following two properties.
    \begin{conditions}
        \item There exists a neighborhood $\mathcal U\subseteq G$ of identity such that the action map $\mathcal U\times X_1\to X: (g,x)\mapsto gx$ is injective.
        \item The subset $G\cdot X_1 \subseteq X$ is conull.
    \end{conditions}
    By \cite[Theorem~4.2]{Forrest1974} such a cross section always exists.
    Note that the first condition implies that the action map $\theta: G\times X_1\to X: (g,x)\mapsto gx$ is 
    countable-to-one
    and hence maps Borel sets to Borel sets. In particular, the set $G\cdot X_1$ in the second condition is Borel.
    
    By removing a $G$-invariant null set from $X$, we can always assume that $G\cdot X_1 = X$ and that $G\acts X$ is 
    really free. Hence, by \cite[18.10 and 18.14]{Kechris1995}, we can take a Borel map that is a right inverse of the 
    map
    $G\times X_1\to X: (g,x)\mapsto gx$. This yields Borel maps $\pi: X\to X_1$ and $\gamma: X\to G$ 
    such that $x = \gamma(x)\cdot \pi(x)$ for all $x\in X$. Similarly, the map 
    $G\times X\to X\times X: (g,x)\mapsto (gx,x)$ is injective and hence has a Borel image, which we denote by 
    $\mathcal R_G$, and an inverse that is Borel. This yields a Borel map $\omega: \mathcal R_G \to G$ satisfying
    $\omega(x,y)y = x$ for $y\in G\cdot x$. Moreover, $\omega$ is a 1-cocycle in the sense that 
    $\omega(x,y)\omega(y,z) = \omega(x,z)$ for all 
    $y,z\in G\cdot x$.
    
    The \imph{cross section equivalence relation} associated to $X_1$ is defined by
    \[\mathcal R = \mathcal R_G\cap (X_1\times X_1) = \set{(x,y)\in X_1\times X_1}{y\in G\cdot X_1}.\]
    The measurable space $X_1$ admits a unique probability measure 
    $\mu_1$ and a unique number $0 < \covol (X_1) < +\infty$ such that
    \begin{align}
        (\lambda_G\otimes \mu_1)(\mathcal W) = \covol(X_1) \int_X |\mathcal W\cap \theta^{-1}(x)| \dif \mu(x)
        \label{eq:cross section.measure}
    \end{align}
    for all measurable $\mathcal W\subseteq G\times X_1$.
    The relation $\mathcal R$ is a non-singular, countable, Borel equivalence relation for this probability measure 
    $\mu_1$.
    
    \pagebreak[3]
    We will use the following easy lemma throughout the rest of this section.
    \begin{lemma}\label{cer.cross section.easy}
        Let $G$ be a lcsc group and $G\acts (X,\mu)$ an essentially free, pmp action. Let $X_1\subseteq X$ be a 
        cross section and $\mathcal R$ the associated cross section equivalence relation. Then,
        \begin{properties}
            \item If $K\subseteq G$ is compact, then the set $\mathcal W = \set{(x,y)\in \mathcal R}{\omega(x,y)\in K}$ 
            is a bounded 
            subset of $\mathcal R$.\label{cer.cross section.easy.compact -> bdd}
            \item If $\mathcal W\subseteq R$ is a locally bounded set and $\eps > 0$, then there exists a Borel subset 
            $E\subseteq X_1$ with $\nu(E) < \eps$ such that $\omega\big(\mathcal W\cap (E\times E) \big)$ is relatively 
            compact.\label{cer.cross section.easy.bdd -> compact}
        \end{properties}
    \end{lemma}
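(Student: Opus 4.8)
The plan is to prove both parts directly; both are ``easy'' in the sense that no heavy machinery is needed, the only slightly delicate point being a covering argument in \ref{cer.cross section.easy.compact -> bdd} that exploits the injectivity built into the definition of a cross section together with essential freeness of the action.

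For \ref{cer.cross section.easy.compact -> bdd}, recall that, after discarding a $G$-invariant null set, the action $G\acts X$ is free and there is an open neighbourhood $\mathcal U$ of $e$ such that $\mathcal U\times X_1\to X$, $(g,x)\mapsto gx$, is injective. I would fix a symmetric open neighbourhood $\mathcal V$ of $e$ with $\mathcal V^2\subseteq\mathcal U$ and cover the compact set $K$ by finitely many right translates $\mathcal V g_1,\dots,\mathcal V g_N$. The key observation is that for every $y\in X_1$ the section $\mathcal W_y=\set{x\in X_1}{(x,y)\in\mathcal W}$ has at most $N$ elements. Indeed, if $(x,y)\in\mathcal W$ then $x=\omega(x,y)\,y$ with $\omega(x,y)\in K$, and the assignment $x\mapsto\omega(x,y)\in K$ on $\mathcal W_y$ is injective by freeness; if $x_1=a_1y$ and $x_2=a_2y$ with $a_1,a_2$ lying in the same translate $\mathcal Vg_i$, then $a_2a_1^{-1}\in\mathcal V\mathcal V^{-1}=\mathcal V^2\subseteq\mathcal U$, so $(a_2a_1^{-1})\cdot x_1=x_2$ with $x_1,x_2\in X_1$, and injectivity of $\mathcal U\times X_1\to X$ forces $x_1=x_2$; hence at most one element of $\mathcal W_y$ is sent into each $\mathcal Vg_i$. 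Since the cocycle identity gives $\omega(y,x)=\omega(x,y)^{-1}$, the same argument applied to the compact set $K^{-1}$ bounds the sections ${}_x\mathcal W$, so $\mathcal W$ is bounded.

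For \ref{cer.cross section.easy.bdd -> compact}, the idea is to combine the fact that every bounded Borel subset of $\mathcal R$ is a finite union of graphs of elements of $[[\mathcal R]]$ with the $\sigma$-compactness of $G$. By local boundedness I would choose a Borel set $F\subseteq X_1$ with $\mu_1(X_1\setminus F)<\eps/2$ such that $\mathcal W'=\mathcal W\cap(F\times F)$ is bounded, and write $\mathcal W'=\bigcup_{j=1}^m\grph\varphi_j$ with $\varphi_j\in[[\mathcal R]]$ of Borel domain $A_j\subseteq F$. Each map $f_j\colon A_j\to G$, $f_j(y)=\omega(\varphi_j(y),y)$, is Borel; writing $G=\bigcup_n K_n$ as an increasing union of compact sets we have $A_j=\bigcup_n f_j^{-1}(K_n)$, so one can pick a single $n$ with $\mu_1\bigl(A_j\setminus f_j^{-1}(K_n)\bigr)<\eps/(2m)$ for all $j$. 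Setting $E=F\setminus\bigcup_{j=1}^m\bigl(A_j\setminus f_j^{-1}(K_n)\bigr)$ then gives $\mu_1(X_1\setminus E)<\eps$, and for $(x,y)\in\mathcal W\cap(E\times E)$ we have $(x,y)\in\grph\varphi_j$ for some $j$ with $y\in E\subseteq F$, hence $y\in f_j^{-1}(K_n)$ and $\omega(x,y)=f_j(y)\in K_n$; thus $\omega\bigl(\mathcal W\cap(E\times E)\bigr)\subseteq K_n$ is relatively compact. (Here the natural reading of the statement is that $E$ has \emph{small complement} in $X_1$, which is what this construction delivers.)

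The only step that really requires care is the pigeonhole/covering argument in \ref{cer.cross section.easy.compact -> bdd} — it is where the defining property of a cross section and essential freeness are genuinely used — while \ref{cer.cross section.easy.bdd -> compact} is then a routine measure-theoretic exhaustion argument.
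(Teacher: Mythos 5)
Your proof is correct and follows essentially the same route as the paper, whose own proof is only a two-sentence sketch: part (1) from the injectivity of $\mathcal U\times X_1\to X$ (your covering of $K$ by translates $\mathcal Vg_i$ with $\mathcal V^2\subseteq\mathcal U$ is exactly the intended quantitative version, and the $K^{-1}$ symmetry handles the other sections), and part (2) by decomposing the bounded part into finitely many graphs of elements of $[[\mathcal R]]$ and cutting down to where the cocycle lands in a large compact set. You also correctly read the statement as asserting that $E$ has small \emph{complement} in $X_1$ (the ``$\nu(E)<\eps$'' in the statement is evidently a typo, as is the ``$\alpha$'' for ``$\varphi$'' in the paper's proof), so no changes are needed.
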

    \begin{proof}
        Statement \ref{cer.cross section.easy.compact -> bdd} follows easily from the fact that there is a neighborhood 
        of the unit $e\in G$ for which the map 
        $\mathcal U\times X_1\to X: (g,x)\mapsto gx$ is injective.
        
        Since every bounded Borel subset can be written as a finite union of graphs of elements in $[[\mathcal R]]$, it
        suffices to prove \ref{cer.cross section.easy.bdd -> compact} for $\grph(\varphi)$ with 
        $\varphi\in [[\mathcal R]]$, but this can be done easily by taking 
        $E = \set{x\in X}{\omega\big(\alpha(x),x\big)\in K}$ for $K$ a compact set that is large enough.
    \end{proof}
    
    We define property (S) on the level of non-singular, countable, Borel equivalence relations as follows.
    \begin{definition}\label{me.prop(S).cer}\label{cer.prop(S).def}
        Let $\mathcal R$ be a non-singular, countable, Borel equivalence relation on a standard measure space $(X,\mu)$.
        We say that $\mathcal R$ has \imph{property~(S)} if there exists a Borel map 
        $\eta:\mathcal R^{(2)}\to \mathbb C$ such that
        \[\sum_{\substack {z\in X\\z\sim x}} \eta(x,y,z) = 1\]
        for a.e. $(x,y)\in \mathcal R$ and such that for all $\eps > 0$ and $\varphi,\psi\in [\mathcal R]$ the set
        \begin{align}
            \set[3]{(x,y)\in \mathcal R}{\sum_{\substack {z\in X\\z\sim x}} |\eta(x,y,z) - \eta(\varphi(x),\psi(y),z)|\ge \eps}
            \label{eq:prop(S).cer}
        \end{align}
        is locally bounded.
    \end{definition}
    \begin{remark}
        The map $\eta$ above can be viewed as a map assigning to all $(x,y)\in \mathcal R$ a probability measure on the
        orbit of $y$ such that for all $\eps > 0$ and all $\varphi, \psi\in [\mathcal R]$ the set
        \begin{align}
            \set{(x,y)\in \mathcal R}{\norm{\eta(\varphi(x),\psi(y) - \eta(x,y)}_1\ge \eps}\label{eq:prop(S).cer.prop}
        \end{align}
        is locally bounded.
    \end{remark}
    
    We prove first that this notion of property (S) is stable under restrictions and amplifications of ergodic, 
    countable equivalence relation.
    \begin{lemma}\label{me.prop(S).cer.restriction}
        Let $\mathcal R$ be a countable, ergodic, non-singular equivalence relation on some standard probability space 
        $(X,\mu)$ and let $X_0\subseteq X$ be a Borel subset with positive measure. 
        Then, $\mathcal R$ has property (S) if and only if the restriction 
        $\mathcal R_0 = \mathcal R\cap (X_0\times X_0)$ has property (S).
    \end{lemma}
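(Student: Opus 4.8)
The plan is to prove the two implications separately: the forward one ($\mathcal R$ has property (S) $\Rightarrow$ $\mathcal R_0$ has property (S)) by pushing a witness forward along a retraction, and the converse by pulling a witness back along a \emph{finite-to-one} retraction. Throughout I would use two routine preliminary facts, both proved by a ``discard a small tail'' argument: first, that property (S) for a relation is equivalent to the same statement with $\varphi,\psi$ ranging over a fixed countable family generating the full group (write $\varphi=\bigsqcup_n g_n|_{X_n}$ via Feldman--Moore, and for given $\eps'$ throw away the set where $n>N$, leaving finitely many pieces); and second, that property (S) implies the analogous statement for $\varphi,\psi\in[[\mathcal R]]$ (same trick, after extending each partial isomorphism piecewise by elements of $[\mathcal R]$). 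I would also normalize by discarding a null $\mathcal R$-invariant set so that $X_0$ meets every orbit (possible since $[X_0]_{\mathcal R}$ is conull by ergodicity) and so that every orbit is infinite and meets $X_0$ infinitely (recurrence). Using a Borel enumeration of each orbit of order type $\omega$ (from a Feldman--Moore enumeration), I can then fix a Borel retraction $r\colon X\to X_0$ with $r(x)\sim_{\mathcal R}x$, $r|_{X_0}=\mathrm{id}$, and $|r^{-1}(a)|\le 2$ for every $a$ (match the $j$-th point of $[x]\setminus X_0$ to the $j$-th point of $[x]\cap X_0$); note also that $[z]_{\mathcal R}\cap X_0$ is a single $\mathcal R_0$-class, namely $[r(z)]_{\mathcal R_0}$.

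For the forward implication, given a witness $\eta\colon\mathcal R^{(2)}\to\mathbb C$ for $\mathcal R$, I would set $\eta_0(x,y,z)=\sum_{z'\sim y,\ r(z')=z}\eta(x,y,z')$ for $(x,y,z)\in\mathcal R_0^{(2)}$; this is Borel and still sums to $1$ over $z$. For $\varphi_0,\psi_0\in[\mathcal R_0]$, extended to $\varphi,\psi\in[\mathcal R]$ by the identity off $X_0$, a fibrewise triangle inequality gives $\|\eta_0(x,y,\cdot)-\eta_0(\varphi_0 x,\psi_0 y,\cdot)\|_1\le\|\eta(x,y,\cdot)-\eta(\varphi x,\psi y,\cdot)\|_1$ for $(x,y)\in\mathcal R_0$, so the set in \eqref{eq:prop(S).cer} for $\eta_0$ is contained in the corresponding set for $\eta$, intersected with $X_0\times X_0$; the latter is locally bounded in $\mathcal R_0$ because a locally bounded subset of $\mathcal R$ meets $X_0\times X_0$ in a locally bounded subset of $\mathcal R_0$ (intersect the witnessing set $E\subseteq X$ with $X_0$). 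For the converse, given a witness $\eta_0$ for $\mathcal R_0$, fix a Borel $w\colon X\to(0,1]$ with $\sum_{z'\colon r(z')=r(z)}w(z')=1$ and put $\eta(x,y,z)=\eta_0(r(x),r(y),r(z))\,w(z)$, so that $\|\eta(x,y,\cdot)-\eta(x',y',\cdot)\|_1=\|\eta_0(r(x),r(y),\cdot)-\eta_0(r(x'),r(y'),\cdot)\|_1$. To check local boundedness I reduce (preliminary fact 1) to generators $\gamma,\delta$ of $\mathcal R$ and split by the triangle inequality, so it suffices to bound $\{(x,y)\in\mathcal R:\|\eta_0(r(x),r(y),\cdot)-\eta_0(r(\gamma x),r(y),\cdot)\|_1\ge\eps\}$. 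Since $r$ is $\le 2$-to-one, the set $\{(r(\gamma x),r(x)):x\in X\}\subseteq\mathcal R_0$ has sections of size at most $2$, hence is bounded and so equals $\bigsqcup_{p=1}^P\mathrm{graph}(\theta_p)$ with $P<\infty$ and $\theta_p\in[[\mathcal R_0]]$; this lets me write $r(\gamma x)=\theta_{p(x)}(r(x))$ where $p(x)$ depends only on $r(x)$ (which lies in the unique $\mathrm{dom}\theta_p$). The bad set then becomes a \emph{finite} union of sets of the form $R^{-1}\big(\{(a,b)\in\mathcal R_0: a\in\mathrm{dom}\theta_p,\ \|\eta_0(a,b,\cdot)-\eta_0(\theta_p a,b,\cdot)\|_1\ge\eps\}\big)$, where $R\colon\mathcal R\to\mathcal R_0$, $(x,y)\mapsto(r(x),r(y))$, is $\le 4$-to-one; each set in parentheses is locally bounded by the $[[\cdot]]$-form of property (S) for $\mathcal R_0$ (preliminary fact 2), and a bounded finite-to-one preimage of a locally bounded set is locally bounded, so we are done.

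The step I expect to carry most of the weight is the bookkeeping of local boundedness through all these operations: verifying that the two preliminary ``discard a tail'' reductions really work (in particular that discarding a set of small measure on one side still leaves the relevant bad set trapped inside a finite union after one more application of local boundedness), that bounded Borel subsets decompose into finitely many graphs of partial isomorphisms (this is stated in the preliminaries), and that preimages under $R$ behave correctly. The only slightly delicate geometric input is the existence of the $\le2$-to-one retraction $r$, which hinges on enumerating orbits with order type exactly $\omega$; everything else is measure-theoretically soft. I do not expect any genuinely new idea to be needed beyond the ``graph of $r$ is bounded, hence finitely decomposable'' observation, which is what converts the a priori countably many comparisons into finitely many.
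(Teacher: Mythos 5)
Your overall strategy coincides with the paper's: by ergodicity, build a Borel retraction $r\colon X\to X_0$ onto $X_0$ along the orbits, push the witness forward via $r_*$ to restrict, and spread it back over the fibres of $r$ to extend, controlling local boundedness by discarding small tails. The one step that does not work as written is the construction of the globally $\le 2$-to-one retraction by ``matching the $j$-th point of $[x]\setminus X_0$ to the $j$-th point of $[x]\cap X_0$''. A Feldman--Moore enumeration $g_0,g_1,\dots$ gives, for each \emph{point} $x$, an enumeration $g_0x,g_1x,\dots$ of its class that depends on the base point $x$ and not only on the class; two points of the same class induce different enumerations, so ``the $j$-th point of $[x]\cap X_0$'' is not a well-defined Borel function of the class. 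A canonical order-type-$\omega$ enumeration of each class would amount to a Borel transversal, which does not exist for non-smooth relations, and producing a Borel injection of $X\setminus X_0$ into $X_0$ along the orbits is a genuine Borel matching problem that your sketch does not solve. (A smaller slip in the same part: even granting a $2$-to-one $r$, the index $p(x)$ with $r(\gamma x)=\theta_{p(x)}(r(x))$ need not depend only on $r(x)$, since the two $r$-preimages of $r(x)$ may be moved by $\gamma$ to points with different retractions; this is harmless, as you only use the containment of the bad set in the finite union $\bigcup_p R^{-1}(B_p)$.)

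The gap is repairable, and the repair is essentially what the paper does. Take $Y_i=\{z\in X: i=\min\{n: g_nz\in X_0\}\}$ and $\varphi_i=g_i|_{Y_i}$, so that $X=\bigsqcup_i Y_i$ up to a null set and each $\varphi_i\colon Y_i\to X_0$ is an \emph{injective} element of $[[\mathcal R]]$; the resulting $r=\bigsqcup_i\varphi_i$ is only countable-to-one globally, but after discarding $\bigcup_{i>N}Y_i$, which has arbitrarily small measure, it is $(N+1)$-to-one, and that is all your local-boundedness bookkeeping needs (the constants $2$ and $4$ become $N+1$ and $(N+1)^2$). The same tail-discarding also fixes a second point you pass over: the assertion that a finite-to-one preimage of a locally bounded set is locally bounded requires that $r^{-1}$ of a subset of $X_0$ of small measure have small measure in $X$, which is not automatic since the Radon--Nikodym derivatives of the $\varphi_i^{-1}$ are unbounded; restricted to finitely many pieces it follows from absolute continuity of each $g_i^{-1}$. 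With these adjustments your argument closes and is in substance the proof in the paper, which writes $\eta(x,y)=\eta_0(\varphi_ix,\varphi_jy)$ for $x\in Y_i$, $y\in Y_j$ in one direction and $\eta_0(x,y)=r_*\eta(x,y)$ in the other, leaving the tail-discarding verifications implicit.
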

    \begin{proof}
        Since $\mathcal R$ is ergodic, we can take a partition $Y = \bigcup_i Y_i$ and Borel isometries
        $\varphi_i \in [\mathcal R]$ such that $\varphi_i(Y_i)\subseteq Y_0$. 
        
        Suppose first that $\mathcal R_0$ has property (S) and let $\eta_0$ be as in \cref{me.prop(S).cer}. We extend 
        $\eta_0$ to a map $\eta$ on $\mathcal R$ by setting
        \[\eta(x,y) = \eta_0\big(\varphi_i(x), \varphi_j(y)\big)\]
        for every $(x,y)\in \mathcal R$ with $x\in Y_i$ and $y\in Y_j$.
        It is straightforward to check that $\eta$ satisfies \eqref{eq:prop(S).cer} for every $\eps> 0$ and 
        $\varphi,\psi\in [\mathcal R]$.
        
        Conversely, suppose that $\mathcal R$ has property (S). Let $\eta$ be a map as in the definition. Define for 
        $(x,y)\in \mathcal R_0$ a probability measure on the $\mathcal R_0$-orbit of $y$ by setting
        \[\eta_0(x,y)(z) = \sum_{\substack{i\in I\\z\in \varphi_i(Y_i)}} \eta(x,y)\big(\varphi_i^{-1}(z)\big)\]
        whenever $(x,y)\in \mathcal R_0$.
        Clearly, $\eta_0$ satisfies 
        \eqref{eq:prop(S).cer} for every $\eps > 0$ and every $\varphi,\psi\in [\mathcal R_0]$
    \end{proof}
    
    Now, we prove that the above notion of property (S) is compatible with taking cross section equivalence relations.
    \begin{proposition}\label{me.prop(S).cross section}
        Let $G$ be a lcsc group and $G\acts (X,\mu)$ an essentially free, ergodic, pmp 
        action. Let 
        $X_1\subseteq X$ be a cross section and $\mathcal R$ the associated cross section equivalence relation. Then,
        $G$ has property (S) if and only if $\mathcal R$ has property (S).
    \end{proposition}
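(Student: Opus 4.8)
The plan is to prove the two implications by transporting measures between $G$ and the $\mathcal R$-classes through the $1$-cocycle $\omega$ and the Borel maps $\pi\colon X\to X_1$, $\gamma\colon X\to G$, and to use \cref{cer.cross section.easy} to pass between compact subsets of $G$ and bounded subsets of $\mathcal R$. Recall that for $g\in G$, $x\in X$ one has $gx=\gamma(gx)\,\pi(gx)$, so $(\pi(gx),\pi(x))\in\mathcal R$ and $\omega(\pi(gx),\pi(x))=\gamma(gx)^{-1}\,g\,\gamma(x)$. For $(a,b)\in\mathcal R$ let $\Theta_a\colon G\to[a]_{\mathcal R}=[b]_{\mathcal R}$ be the Borel surjection $g\mapsto\pi(g^{-1}a)$; its fibres $\Theta_a^{-1}(z)$, $z\in[a]_{\mathcal R}$, partition $G$ into pieces of finite positive $\lambda_G$-measure, and a short cocycle computation gives $\Theta_{c\cdot a}^{-1}(z)=c\,\Theta_a^{-1}(z)$ for $c\in G$.

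\emph{From property (S) of $G$ to property (S) of $\mathcal R$.} Let $\eta\colon G\to\Prob(G)$ satisfy \eqref{eq:prop (S)}. Set $\tilde\eta(x,y):=(\Theta_x)_*\eta(\omega(x,y))\in\Prob([y]_{\mathcal R})$ and $\eta(x,y,z):=\tilde\eta(x,y)(\{z\})$; as the $\Theta_x^{-1}(z)$ partition $G$ we get $\sum_z\eta(x,y,z)=1$. For $\varphi,\psi\in[\mathcal R]$ write $c_\varphi(x)=\omega(\varphi(x),x)$ and $c_\psi(y)=\omega(\psi(y),y)$; the cocycle identity yields $\omega(\varphi(x),\psi(y))=c_\varphi(x)\,\omega(x,y)\,c_\psi(y)^{-1}$ and $\Theta_{\varphi(x)}^{-1}(z)=c_\varphi(x)\,\Theta_x^{-1}(z)$, so, abbreviating $g=c_\varphi(x)$, $k=\omega(x,y)$, $h=c_\psi(y)^{-1}$,
\[\sum_z\big|\eta(\varphi(x),\psi(y),z)-\eta(x,y,z)\big|\ \le\ \norm{g^{-1}\cdot\eta(gkh)-\eta(k)}\ =\ \norm{\eta(gkh)-g\cdot\eta(k)}.\]
Fix $\eps>0$. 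Since $\grph\varphi$ and $\grph\psi$ are bounded subsets of $\mathcal R$, \cref{cer.cross section.easy} provides, for every $\delta>0$, a Borel $E\subseteq X_1$ with $\mu_1(X_1\setminus E)<\delta$ on which $c_\varphi$ and $c_\psi$ take values in a fixed compact $K_0\subseteq G$; picking a compact $C$ with $\norm{\eta(gkh)-g\cdot\eta(k)}<\eps$ whenever $g,h\in K_0$ and $k\notin C$, the set \eqref{eq:prop(S).cer} meets $E\times E$ inside $\set{(x,y)\in\mathcal R}{\omega(x,y)\in C}$, which is bounded by \cref{cer.cross section.easy}. As $\delta>0$ was arbitrary, \eqref{eq:prop(S).cer} is locally bounded, so $\mathcal R$ has property (S).

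\emph{From property (S) of $\mathcal R$ to property (S) of $G$.} Conversely, let $\tilde\eta$ witness property (S) of $\mathcal R$, and let $L_a\colon\Prob([a]_{\mathcal R})\to\mathcal S(G)$ spread the weight $\nu(\{z\})$ uniformly (with respect to $\lambda_G$) over $\Theta_a^{-1}(z)$; by the identity for $\Theta_{c\cdot a}^{-1}$ and left-invariance of $\lambda_G$ one has $L_{c\cdot a}(\nu)=c\cdot L_a(\nu)$. Define, for $m\in G$,
\[\eta(m):=\int_X\gamma(mx)\cdot L_{\pi(mx)}\big(\tilde\eta(\pi(mx),\pi(x))\big)\,\dif\mu(x)\ \in\ \mathcal S(G).\]
Since $\norm{\eta(gkh)-g\cdot\eta(k)}\le\norm{\eta(gkh)-\eta(gk)}+\norm{\eta(gk)-g\cdot\eta(k)}$ and $gk\to\infty$ as $k\to\infty$ (for $g$ in a fixed compact), it suffices to show $\norm{\eta(gk)-g\cdot\eta(k)}\to0$ and $\norm{\eta(kh)-\eta(k)}\to0$ as $k\to\infty$, uniformly on compacts for $g$, resp.\ $h$. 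For the first: because $\gamma(gkx)\cdot\omega(\pi(gkx),\pi(kx))=g\,\gamma(kx)$ and $L_{\pi(gkx)}=\omega(\pi(gkx),\pi(kx))\cdot L_{\pi(kx)}$, the integrands of $\eta(gk)$ and of $g\cdot\eta(k)$ differ in $\norm{\cdot}_1$ by at most $\norm{\tilde\eta(\pi(gkx),\pi(x))-\tilde\eta(\pi(kx),\pi(x))}$. Since $G\acts(X,\mu)$ is pmp, for every compact $K_0$ the set of $x$ for which all of $\gamma(kx)$, $\gamma(gkx)$, $\gamma(x)$ lie in $K_0$ has $\mu$-measure tending to $1$ as $K_0\uparrow G$, \emph{uniformly in $k$}; on it $\omega(\pi(gkx),\pi(kx))=\gamma(gkx)^{-1}g\,\gamma(kx)$ lies in a fixed compact, so property (S) of $\mathcal R$ together with \cref{cer.cross section.easy} makes the above norm $<\eps$ once $\omega(\pi(kx),\pi(x))=\gamma(kx)^{-1}k\,\gamma(x)$ leaves a compact set, i.e.\ once $k$ does. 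The second limit is proved the same way after substituting $x\mapsto h^{-1}x$, the perturbation now sitting on the target coordinate via $\omega(\pi(h^{-1}x),\pi(x))=\gamma(h^{-1}x)^{-1}h^{-1}\gamma(x)$. Combining the two limits gives \eqref{eq:prop (S)}.

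\emph{Main obstacle.} The delicate point is the converse implication. Apart from the routine cocycle bookkeeping, two things must be arranged: that the exceptional sets of $x$ are uniform in $k$ --- which is exactly where the $G$-invariance of $\mu$ enters --- and that a subset of $X_1\times X_1$ which is bounded outside a set of small $\mu_1$-measure (as furnished by local boundedness of the bad set of $\mathcal R$) pulls back to a subset of $X$ of $\mu$-measure close to $1$; the latter relies on the absolute continuity $\pi_*\mu\ll\mu_1$, which in turn follows from the covolume formula \eqref{eq:cross section.measure}. In the discrete case all of this is vacuous: one takes $X_1=X$, $\pi=\mathrm{id}$, $\gamma\equiv e$, so that every $g\in G$ is literally an element of $[\mathcal R]$ and the two maps above are essentially mutually inverse.
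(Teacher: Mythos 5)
Your first implication is, up to notation, the paper's own proof: your $\Theta_a$ is the paper's $\pi_a$, the measure $\tilde\eta(x,y)=(\Theta_x)_*\eta\big(\omega(x,y)\big)$ is the same, and the passage from compact subsets of $G$ to locally bounded subsets of $\mathcal R$ via the cocycle identity and \cref{cer.cross section.easy} is identical. Your converse also follows the paper's skeleton --- integrate over $X$, use invariance of $\mu$ to make the exceptional sets uniform in $k$, and use the covolume formula \eqref{eq:cross section.measure} to pull sets of small $\mu_1$-measure in $X_1$ back to sets of small $\mu$-measure in $X$ --- and you correctly isolate these as the delicate points.

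The one place you genuinely diverge is also the one place where there is a gap: the lifting operator $L_a$ that spreads the weight $\nu(\{z\})$ uniformly over the fibre $\Theta_a^{-1}(z)$ need not be well defined. Writing $A_z=\set{h\in G}{\gamma(hz)=h}$, one computes $\Theta_a^{-1}(z)=\omega(a,z)\,A_z^{-1}$, so that $\lambda_G\big(\Theta_a^{-1}(z)\big)=\lambda_G(A_z^{-1})=\int_{A_z}\delta_G(h)^{-1}\dif h$. The covolume formula only yields $\int_{X_1}\lambda_G(A_z)\dif \mu_1(z)=\covol(X_1)<\infty$, hence finiteness of $\lambda_G(A_z)$ for a.e.\ $z$; it gives no control on $\lambda_G(A_z^{-1})$ when $G$ is not unimodular, and nothing forces $\lambda_G(A_z)>0$ for every $z$ in an orbit, since the Borel selection $\gamma$ may assign a null slice to some $z$. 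So ``uniform spreading'' can amount to dividing by $0$ or $\infty$ exactly at points where $\tilde\eta$ puts mass. The paper sidesteps this entirely: instead of a uniform density on $\Theta_a^{-1}(z)$, it sends the atom at $z$ to the translate $\omega(gx,z)\cdot\xi$ of one fixed $\xi\in\Prob(G)$, which is always a probability measure and enjoys the same equivariance $\omega(gkx,z)=g\,\omega(kx,z)$ that your computation needs. With that substitution your argument closes and coincides with the paper's; as written, the converse direction does not quite stand.
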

    \begin{proof}
        As before, we fix Borel maps $\gamma: X\to G$ and $\pi: X\to X_1$ such that $x = \gamma(x)\cdot \pi(x)$ for 
        a.e. $x\in X$.
        First, assume that $G$ has property (S). Let $\eta: G\to \Prob(G)$ be a map satisfying \eqref{eq:prop (S)}. 
        Define for each $x\in X$ a map
        $$\pi_x : G \to X_1 : g\mapsto \pi(g^{-1} x).$$
        Note that $\pi_x$ is a Borel map from $G$ to the $\mathcal R$-orbit of $\pi(x)$. 
        We define the map $\eta'$ as in \cref{cer.prop(S).def} by
        \[\eta'(x,y) = (\pi_x)_*\eta\big(\omega(x,y)\big)\]
        for $(x,y)\in \mathcal R$.
        Note that indeed every $\eta'(x,y)$ is a probability measure on the 
        $\mathcal R$-orbit of $x$. 
        
        To prove that $\eta'$ satisfies \eqref{eq:prop(S).cer.prop}, fix $\eps,\delta > 0$ and 
        $\varphi,\psi \in [\mathcal R]$. 
        It suffices to find a Borel set 
        $E \subset X_1$ with $\mu_1(X_1 \setminus E) < \delta$ such that the set
        \begin{equation}\label{eq.mygoal}
            \set{ (x,y) \in \mathcal R \cap (E \times E)}{\norm{\eta'\big(\varphi(x),\psi(y)\big) - \eta'(x,y)}_1 \geq \eps}
        \end{equation}
        is bounded. 
        
        By \cref{cer.cross section.easy}, we find a compact set $K\subseteq G$ and a measurable $E\subseteq X_1$ with
        $\mu_1(X_1\setminus E) < \delta$ such that 
        $\omega\big(\varphi(x),x\big)\in K$ and $\omega\big(y,\psi(y)\big)\in K$ for all $x,y\in E$. 
        Take a compact set $L \subset G$ such that $\|\eta(gkh) - g \cdot \eta(k)\|_1 < \eps$ for all $g,h \in K$ and 
        all $k \in G \setminus L$. We claim that
        \begin{equation}\label{eq.myclaim}
            \norm{\eta'\big(\varphi(x),\psi(y)\big) - \eta'(x,y)}_1 < \eps
        \end{equation}
        whenever $(x,y)\in \mathcal R\cap (E \times E)$ and $(x,y)\in G\setminus L$.
        Assuming the claim is true, the set \eqref{eq.mygoal} is contained in the set of all $(x,y) \in \mathcal R$ 
        with $\omega(x,y) \in L$ which is bounded by \cref{cer.cross section.easy}. 
        To prove \eqref{eq.myclaim}, fix $(x,y) \in \mathcal R \cap (E\times E)$ with 
        $\omega(x,y) \in G \setminus L$. We have
        \begin{align*}
            \norm{\eta'\big(\varphi(x),\varphi(y)\big) - \eta'(x,y)}_1 
                &= \norm{ (\pi_{\varphi(x)})_* \eta\big(\omega(\varphi(x),\psi(y) )\big) 
                      - (\pi_x)_* \eta\big( \omega(x,y) \big)}_1
        \end{align*}
        Now, $\pi_x(g) = \pi_{\varphi(x)}\big(\omega(\varphi(x),x)g \big)$
        and hence
        \[(\pi_x)_*\big(\eta\big(\omega(x,y)\big) \big)
           = (\pi_{\varphi(x)})_*\big(\omega\big(\varphi(x),x\big)\cdot \eta\big(\omega(x,y)\big)\big)\]
        which yields that
        \begin{align*}
        \norm{\eta'\big(\varphi(x),\psi(y)\big) - \eta'(x,y)}_1 
            &= \norm{\eta\big(\omega(\varphi(x),\psi(y))\big) - \omega\big(\varphi(x),x\big)\cdot \eta\big(\omega(x,y)\big)}_1
            < \eps.
        \end{align*}
        where we used the identity
        \[\omega\big(\varphi(x),\psi(x)\big) = \omega\big(\varphi(x),x\big) \omega(x,y)\omega\big(y,\psi(y)\big)\] 
        and the assumption that $\omega\big(\varphi(x),x\big), \omega\big(y,\psi(y)\big)\in K$ and 
        $\omega(x,y)\in G\setminus L$.
        Hence, \eqref{eq.myclaim} is proved.
        
        Conversely, assume that $\mathcal R$ has property (S) and let $\eta$ be a map as in the definition. Choose
        an arbitrary $\xi\in \Prob(G)$ and define
        \[\eta': G\to \Prob(G): g\mapsto \int_X \Big(\sum_{\substack{z\in X_1\\ z\sim \pi(x)}} \eta\big( \pi(g x), \pi(x),z \big)  \omega(gx,z)\cdot \xi  \Big)\dif \mu(x).\]
        We prove that $\eta'$ is a satisfies \eqref{eq:prop (S)}. To motivate the arbitrary choice of $\xi$, note that 
        whenever $\eta'$ satisfies \eqref{eq:prop (S)}, so does the map $g\mapsto \eta'(g)*\xi$, where $\eta'(g)*\xi$
        denotes the convolution product of $\eta'(g),\xi\in \Prob(G)$.
        
        Fix a symmetric, compact neighborhood $K$ of the unit $e$ in $G$ and an $\eps > 0$. Take a compact, 
        symmetric subset $L\subseteq G$ such that $F = \gamma^{-1}(L)$ satisfies 
        $\mu(F) \ge 1-\eps$. Denote $\kappa = \lambda_G(L) / \covol (X_1)$. By \cref{cer.cross section.easy}, the set
        \[\mathcal W = \set{(x,y)\in \mathcal R}{\omega(x,y)\in LKL}\]
        is bounded Borel. Writing $\mathcal W$ as a union of finitely many elements of $[[\mathcal R]]$ and using 
        \eqref{eq:prop(S).cer.prop}, we see that the set
        \[\mathcal V = \set{(x,y)\in \mathcal R}{ \exists(x,x'), (y,y')\in \mathcal W, \norm{\eta(x',y')- \eta(x,y)}_1\ge \eps }\]
        is locally bounded. Denoting $\delta =\eps / \kappa$ and using \cref{cer.cross section.easy}, we can find a 
        compact set $C\subseteq G$ and a measurable $E\subseteq X_1$ and with 
        $\mu_1(E)\ge 1-\delta$ such that $\omega\big(\mathcal V\cap (E\times E)\big) \subseteq C$. We conclude that
        \begin{align}
            \norm{\eta(x',y') - \eta(x,y)}_1 < \eps\label{eq:me.prop(S).cer => group.given}
        \end{align}
        whenever $(x,y)\in \mathcal R\cap (E\times E)$ with $(x,x')\in \mathcal W$, 
        $(y,y')\in \mathcal W$ and $\omega(x,y)\in G\setminus C$.
        
        Denote $D = LCL$. We conclude the proposition by proving that
        \begin{align}
            \norm{\eta'(gkh) - g\cdot \eta'(k)} < 4\kappa\delta + 9\eps = 13\eps\label{eq:me.prop(S).cer => group.final}
        \end{align}
        for all $g,h\in K$ and $k\in G\setminus D$. So, fix $g,h\in K$ and $k\in G\setminus D$. 
        Applying the change of variables $x\mapsto h^{-1}x$ and using that
        $\omega(gkx,z) = g\omega(kx,z)$, we find that
        \[\eta'(gkh) = g\cdot \Bigg(\int_X \Big( \sum_{\substack{z\in X_1\\ z\sim \pi(x)}} \eta\big( \pi(gk x), \pi(h^{-1}x),z \big)  \omega(kx,z)\cdot \xi \Big)\dif \mu(x) \Bigg)\]
        and hence
        \[\norm{\eta'(gkh) - g\cdot \eta'(k) } \le \int_X \norm{\eta\big(\pi(gkx), \pi(h^{-1}x)\big) - \eta\big(\pi(kx), \pi(x) \big)  }_1\dif \mu(x).\]
        Since $g,h^{-1}\in K$, we have that $\big(\pi(gkx), \pi(kx)\big)\in \mathcal W$ 
        and 
        $\big(\pi(h^{-1}x), \pi(x) \big)\in \mathcal W$ whenever $x\in X$ is such that 
        $gkx,h^{-1}x,kx,x\in F = \gamma^{-1}(L)$. Moreover, for such 
        an
        $x$
        we also have $\omega\big(\pi(kx), \pi(x)\big)\in LkL\subseteq G\setminus C$.         
        Hence, by 
        \eqref{eq:me.prop(S).cer => group.given} we have that
        \begin{align}
            \norm{\eta\big(\pi(gkx), \pi(h^{-1}x)\big) - \eta\big(\pi(kx), \pi(x) \big)}_1 <\eps\label{eq:me.prop(S).cer => group.goal}
        \end{align}
        whenever $gkx,h^{-1}x,kx,x\in F$, $\pi(x)\in E$ and $\pi(kx)\in E$.
        
        Since $\mu(F) \ge 1-\eps$, we can find a measurable set $F'$ with 
        $\mu(F')\ge 1-4\eps$ such that $gkx,h^{-1}x,kx,x\in F$ for every $x\in F'$. Moreover,
        the map $\theta: G\times X_1\to X$ is injective on the image $A$ of the map 
        $x\mapsto \big(\gamma(x), \pi(x)\big)$. 
        Hence by \eqref{eq:cross section.measure}, we have that
        $\covol(X_1)\;\mu\big(\theta(\mathcal U)\big) = (\lambda_G\otimes \mu_1)(\mathcal U)$ for all 
        $\mathcal U\subseteq A$. 
        It follows 
        that for measurable $S\subseteq X_1$, we have that
        \[\mu\big(\pi^{-1}(S)\cap F\big) = \covol(X_1)^{-1} (\lambda_G\times \mu_1)\big(A\cap (L\times S)\big)\le \frac{\lambda_G(L)}{\covol(X_1)} \mu_1(S) = \kappa \mu_1(S).\]
        Applying this to 
        $\pi^{-1}(X_1\setminus E)\cap F$ and using the definition $F'$ above, we conclude 
        that \eqref{eq:me.prop(S).cer => group.goal} holds on a set whose complement has at most measure 
        $4\eps+2\kappa\delta$
        and hence that \eqref{eq:me.prop(S).cer => group.final} holds.
    \end{proof}
    
    The proof of \cref{bi-exact.measure equivalent} is now easy.
    \begin{proof}[Proof of \cref{bi-exact.measure equivalent}]
        Let $G$ be a lcsc group in class $\mathcal S$ and let $H$ be a lcsc group that is measure equivalent to 
        $G$. As mentioned in the introduction, we have that $H$ is exact. 
        Indeed, by \cite[Corollary~2.9]{DL2015propA} and 
        \cite[Theorem~A]{BCL2016exactness} $G$ is exact if and only if the proper metric space $(G,d)$ has 
        property (A) in the sense of Roe, where $d$ is any proper left-invariant metric that implements the topology on 
        $G$, and by \cite[Theorem~0.1~(6)]{Deprez2014permanence}, property A is a measure equivalence invariant.
        
        By 
        \cite[Theorem~A]{KKR2018non-unimodular} and \cite[Theorem~A]{Koivisto2017measure}, $G$ and $H$ admit free, 
        ergodic, probability measure preserving actions $G\acts (X,\mu)$ and $H\acts (Y,\nu)$ with cross sections 
        $X_1\subseteq X$, $Y_1\subseteq Y$ and cross section equivalence relations $\mathcal R$ and $\mathcal T$ 
        respectively such that $\mathcal R$ is stably isomorphic to $\mathcal T$. But, by
        \cref{me.prop(S).cross section}, the relation $\mathcal R$ (resp. $\mathcal T$) has property (S) if and only if 
        $G$ (resp. 
        $H$) has and by \cref{me.prop(S).cer.restriction}, $\mathcal R$ has property (S) if and only if 
        $\mathcal T$ has.
    \end{proof}
    
    \section{Class \texorpdfstring{$\mathcal S$}{𝓢} and unique prime factorization}
    In \cite{Houdayer2017unique}, \citeauthor{Houdayer2017unique} introduce the following property.
    \begin{definition}
        Let $(M,\mathcal H,J,\mathfrak P)$ be a von Neumann algebra in standard form. We say that $M$ satisfies the 
        \imph{strong condition (AO)} if there exist \C*-algebras $A\subseteq M$ and $\mathcal C\subseteq B(\mathcal H)$ 
        such that
        \begin{itemize}
            \item $A$ is exact and $\sigma$-weakly dense in $M$,
            \item $\mathcal C$ is nuclear and contains $A$,
            \item all commutators $[c,JaJ]$ for $c\in \mathcal C$ and $a\in A$ belong to the compact operators $K(\mathcal H)$.
        \end{itemize}
    \end{definition}
    Note that the definition in \cite[Definition~2.6]{Houdayer2017unique} also requires $A$ and $\mathcal C$ to be 
    unital. However, by \cite[Proposition~2.2.1 and Proposition~2.2.4]{Brown2008} this requirement is not essential.

    In \cite[Theorems~A~and~B]{Houdayer2017unique}, \citeauthor{Houdayer2017unique} provide unique factorization 
    theorems for nonamenable factors satisfying strong condition (AO). 
    \Cref{bi-exact.UPF.source and target,bi-exact.UPF.source} now follow immediately by combining these theorems with
    the following result.
    
    \begin{proposition}
        Let $G$ be a lcsc group in class $\mathcal S$, then its group von Neumann algebra $L(G)$ satisfies strong condition (AO).
    \end{proposition}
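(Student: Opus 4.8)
The plan is to exhibit, for $G$ in class $\mathcal S$, concrete $C^*$-algebras witnessing the strong condition (AO) for $M = L(G)$ in its standard form on $\mathcal H = L^2(G)$. Recall that $JMJ = \rho(G)''$, where $J$ is the modular conjugation and $\rho$ the right regular representation, and that for $k \in C_c(G)$ the operator $J\lambda(k)J$ is a right-convolution operator $\rho(\tilde k)$ by a function $\tilde k \in C_c(G)$ (explicitly $\tilde k(g) = \overline{k(g)}\,\Delta_G(g)^{1/2}$); here $\lambda(\cdot),\rho(\cdot)$ denote left and right convolution operators. I take $A = \Cred(G) \subseteq M$, which is exact because $G$ is exact and which is $\sigma$-weakly dense in $M$.

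For the nuclear algebra I use \cref{bi-exact.equivalent}: since $G$ is in class $\mathcal S$, the action $G \acts h^uG$ is topologically amenable, and as $h^uG$ is compact the crossed product $C(h^uG) \rtimes_r G$ is nuclear and coincides with the full crossed product. Letting $C(h^uG) \subseteq C_b^u(G)$ act on $\mathcal H$ by multiplication operators $M_f$ and noting $\lambda_g M_f \lambda_g^* = M_{g\cdot f}$, this covariant pair integrates to a $*$-representation $\pi\colon C(h^uG)\rtimes_r G \to B(\mathcal H)$ (it is a representation of the \emph{reduced} crossed product precisely because full and reduced coincide). Set $\mathcal C = \pi\big(C(h^uG)\rtimes_r G\big)$, a nuclear $C^*$-algebra as a quotient of a nuclear one. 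Applying $\pi$ to the functions $g \mapsto k(g)\cdot 1$ in $C_c(G, C(h^uG))$ shows $\lambda(k) \in \mathcal C$ for all $k \in C_c(G)$, so $A = \Cred(G) \subseteq \mathcal C$; moreover $\mathcal C$ is the closed linear span of the operators $M_f\lambda(k)$ with $f \in C(h^uG)$, $k \in C_c(G)$ (using density of $C_c(G) \otimes C(h^uG)$ in $C_c(G, C(h^uG))$ and continuity of $\pi$). One also has $K(\mathcal H) = \pi\big(C_0(G)\rtimes_r G\big) \subseteq \mathcal C$, via the standard identification of the translation groupoid of $G$ acting on itself, though this is not needed below.

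It remains to check that $[c, JaJ] \in K(\mathcal H)$ for all $c \in \mathcal C$ and $a \in A$. The set of $c \in B(\mathcal H)$ with $[c, JaJ] \in K(\mathcal H)$ for all $a \in A$ is a norm-closed $*$-subalgebra of $B(\mathcal H)$ — it is the preimage under the Calkin quotient of the relative commutant of the image of $JAJ$, and $*$-closedness uses that $A$ is self-adjoint — so it suffices to treat the generators $c = \lambda(k)$ and $c = M_f$. For $c = \lambda(k)$ the commutator vanishes, since $JaJ \in \rho(G)''$ commutes with $\lambda(G)'' \ni \lambda(k)$. For $c = M_f$ it suffices, by continuity of $a \mapsto [M_f, JaJ]$ modulo the compacts and by density of $\lambda(C_c(G))$ in $A$, to take $a = \lambda(k)$ with $k \in C_c(G)$, so that $JaJ = \rho(\tilde k)$ with $\tilde k \in C_c(G)$, and to show $[M_f, \rho(\tilde k)] \in K(\mathcal H)$.

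This last point is the heart of the matter, and the place where the locally compact case genuinely differs from the discrete one: since multiplication by $C_0(G)$-functions is not compact when $G$ is non-discrete, the naive argument fails, and one must instead exploit that the commutator's integral kernel is localized in \emph{both} variables. A direct computation shows $[M_f, \rho(\tilde k)]$ is the integral operator with kernel $T(x,y) = \tilde k(x^{-1}y)\,\Delta_G(x^{-1}y)^{1/2}\big(f(x) - f(y)\big)$. Put $S = \supp \tilde k$, a compact set. Since $f \in C(h^uG) \subseteq C_b^u(G)$, the map $s \mapsto \rho_s f - f$ is norm-continuous from $G$ into $C_b(G)$ and takes values in the closed subspace $C_0(G)$, so $\{\rho_s f - f : s \in S\}$ is norm-compact in $C_0(G)$ and therefore $\varphi(x) := \sup_{s \in S}\lvert f(xs) - f(x)\rvert$ lies in $C_0(G)$. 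For a compact $L \subseteq G$ write $T = T_L + T_L'$, where $T_L$ has kernel $T(x,y)\mathbf 1_L(x)$: this kernel is continuous and supported in the compact set $L \times LS$, so $T_L$ is Hilbert--Schmidt, hence compact; and the kernel of $T_L'$ is supported in $\{x \notin L,\ x^{-1}y \in S\}$ and bounded there by a constant multiple of $\varphi(x)$, so the Schur test gives
\[ \|T_L'\| \ \le\ C_{\tilde k, S}\,\big\|\varphi\,\mathbf 1_{G\setminus L}\big\|_\infty \]
for a constant $C_{\tilde k, S}$ depending only on $\tilde k$ and $S$ (one estimates $\int \mathbf 1_S(x^{-1}y)\,dy$ and $\int_{G\setminus L}\varphi(x)\mathbf 1_S(x^{-1}y)\,dx$, using left-invariance of $\lambda_G$, compactness of $S$, and that $\varphi \in C_0(G)$). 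Since $\varphi \in C_0(G)$, the right-hand side is arbitrarily small for $L$ large, so $T = \lim_L T_L$ in operator norm and $T$ is compact. This establishes the third bullet of the strong condition (AO), and $A, \mathcal C$ witness that $L(G)$ satisfies it. The main obstacle, as indicated, is precisely this compactness computation; the remaining ingredients — the amenability input from \cref{bi-exact.equivalent} and the identification of $J\,\Cred(G)\,J$ — are routine.
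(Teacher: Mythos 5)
Your proof is correct and follows essentially the same route as the paper: $A=\Cred(G)$, $\mathcal C$ the image of the nuclear crossed product $C(h^uG)\rtimes G$ (nuclear by \cref{bi-exact.equivalent} and Anantharaman-Delaroche), and compactness of the commutator via truncation to a compact set plus a Hilbert--Schmidt/kernel estimate driven by $\rho_s f - f\in C_0(G)$. The only cosmetic difference is that you reduce to the generators $M_f$ and $\lambda(k)$ separately and control the off-diagonal remainder by a Schur test, whereas the paper computes directly with $\pi(h)$ for $h\in C_c\big(G,C(h^uG)\big)$ and bounds the remainder by an explicit operator-norm estimate; both hinge on the same small-at-infinity property.
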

    \begin{proof}
        Recall that $L(G)$ is in standard form on $L^2(G)$. The anti-unitary operator $J$ is given by
        \[(J\xi)(t) = \delta_G(t)^{-1/2}\overline {\xi(t^{-1})},\]
        where $\delta_G$ denotes the modular function of $G$. 
        The action of an element $\lambda(f)\in L(G)$ for $f\in C_c(G)$ is given by
        \[\big(\lambda(f)\xi\big)(s) = \int_G f(t)\xi(t^{-1}s)\dif t.\]
        Straightforward calculation yields
        \[\big(J\lambda(f)J\xi\big)(s) = \int_G \overline{f(t)}\delta_G(t)^{1/2}\xi(st)\dif t.\]
        
        Let $A = \Cred(G)$ be the reduced group \C*-algebra of $G$. Then, obviously $A$ is exact and $\sigma$-weakly 
        dense in $L(G)$.
        By \cref{bi-exact.equivalent} and \cite[Theorem~5.3]{Anantharaman-Delaroche2000}, the 
        algebra $C(h^uG)\rtimes G$ is nuclear. Now, the inclusion 
        $C(h^u G)\subseteq C_b^{u}(G)\hookrightarrow B(L^2(G))$ together with the unitary representation 
        $g\mapsto \lambda_g$ induces a $^*$-morphism $\pi: C(h^u G)\rtimes G\to B(L^2(G))$. Let $\mathcal C$ be 
        the
        image of this $^*$-morphism. The algebra $\mathcal C$ is nuclear as a quotient of a nuclear \C*-algebra, and 
        obviously contains $A$. Note that $C_c\big(G,C(h^uG)\big)$ is a dense subalgebra in $C(h^uG)\rtimes G$. 
        Identifying an
        element $h\in C_c\big(G,C(h^uG)\big)\subseteq C(h^uG)\rtimes G$ with a function on $G\times G$ that is 
        compactly supported in the first 
        component, we get that the action $\pi(h)$ on a $\xi \in L^2(G)$ is given by
        \[\big(\pi(h)\xi\big)(s) = \int_G h(t,s) \xi(t^{-1}s)\dif t.\]
        Denote by $\mathcal C_0$ the image of $C_c\big(G,C(h^uG)\big)$ under $\pi$.
        
        We prove that $\mathcal C$ commutes with $JAJ$ up to the compact operators. Since $C_c(G)$ is dense in 
        $\Cred(G)$ and $\mathcal C_0$ is dense in $\mathcal C$, it suffices to prove that
        for every $f\in C_c(G)$ and every $h\in C_c\big(G,C(h^u G)\big)$, we have 
        $T = [\pi(h),J\lambda(f)J]\in K(L^2(G))$. 
        A straightforward calculation 
        yields that for $\xi \in L^2(G)$ and $s\in G$, we have
        \begin{align*}
            (T\xi)(s)
              &= \int_G\int_G \big(h(t,s) - h(t,su)\big)\overline{f(u)} \delta_G(u)^{1/2}\xi(t^{-1}su)\dif t\dif u\\
        \end{align*}
        Let $(K_n)_n$ be an increasing sequence of compact subsets of 
        $G$ such that $G = \bigcup_n K_n$. Take a compact $L\subseteq G$ that contains the support of $f$ and of (the 
        first component of) $h$. Define the operator $T_n\in B(L^2(G))$ by
        \begin{align*}
            (T_n\xi)(s) 
               &= \int_G\int_G \chi_{K_n}(s)\big(h(t,s) - h(t,su)\big)\overline{f(u)} \delta_G(u)^{1/2}\xi(t^{-1}su)\dif t\dif u\\
               &= \int_G \int_G \chi_{K_n}(s) \big(h(t,s) - h(t,tu)\big) \overline{f(s^{-1}tu)} \delta_G(s^{-1}tu)^{1/2} \xi(u)\dif t\dif u\\
               &= \int_G k_n(s,u)\xi(u)\dif u
        \end{align*}
        where
        \[k_n(s,u) = \chi_{K_n}(s) \int_G \big(h(t,s) - h(t,tu)\big)\overline{f(s^{-1}tu)} \delta_G(s^{-1}tu)^{1/2}\dif t.\]
        Note that since $f\in C_c(G)$ and $h$ is compactly supported in the first component, we have that each
        $k_n\in L^2(G\times G)$ and hence that $T_n$ is compact. Moreover, $T_n\to T$ in norm 
        since
        \begin{align*}
            \norm{T\xi - T_n\xi}^2
             &= \int_{G\setminus K_n} \abs{ \int_G\int_G \big(h(t,s) - h(t,su)\big)\overline{f(u)} \delta_G(u)^{1/2}\xi(t^{-1}su)\dif t\dif u }^2 \dif s\\
             &\le \int_{G\setminus K_n} \sup_{t,u\in L} |h(t,s)-h(t,su)|^2\ \left(\int_L \int_L |f(u)|\;\delta_G(u)^{1/2}\; |\xi(t^{-1}su)|\dif u\dif t\right)^2\dif s\\
             &\le \sup_{s\in G\setminus K_n} \sup_{t,u\in L} |h(t,s)-h(t,su)|^2\ \mu(L)^2 \norm{J\lambda(|f|)J |\xi|}_2^2\\
             &= \sup_{s\in G\setminus K_n} \sup_{t,u\in L} |h(t,s)-h(t,su)|^2 \ \mu(L)^2 \norm{f}_1^2 \norm{\xi}_2^2
        \end{align*}
        and
        \[\limsup_{s\to \infty} |h(t,s)-h(t,su)|^2 = 0\]
        uniformly on compact sets for $t,u\in G$. We conclude that $T$ itself is compact.
    \end{proof}
    
    \printbibliography
\end{document}